\newtheorem{theorem}{Theorem}[section]
\newtheorem{definition}[theorem]{Definition}
\newtheorem{lemma}[theorem]{Lemma}
\newtheorem{remark}[theorem]{Remark}
\newenvironment{proof}[1][Proof]{\textbf{#1.} }{\hfill\rule{0.5em}{0.5em}}
{\catcode`\@=11\global\let\AddToReset=\@addtoreset
\AddToReset{equation}{section}

\AddToReset{theorem}{section}

\newenvironment{taggedassumption}[1]
 {\taggedassumptionx}
 {\endtaggedassumptionx}

\title{Regularity for the steady Stokes-type flow of incompressible Newtonian fluids in some generalized function settings}

\author{Minh-Phuong Tran\footnote{Corresponding author} \thanks{Applied Analysis Research Group, Faculty of Mathematics and Statistics, Ton Duc Thang University, Ho Chi Minh city, Vietnam; \texttt{tranminhphuong@tdtu.edu.vn}}, Thanh-Nhan Nguyen\thanks{Group of Analysis and Applied Mathematics, Department of Mathematics, Ho Chi Minh City University of Education, Ho Chi Minh city, Vietnam; \texttt{nhannt@hcmue.edu.vn}}, Hong-Nhung Nguyen\thanks{Department of Mathematics, Ho Chi Minh City University of Education, Ho Chi Minh city, Vietnam; \texttt{nhungnh2102@gmail.com}}}
\date{\today} 
%\date{}

\begin{document}
\maketitle

\begin{abstract}
A study of regularity estimate for weak solution to generalized stationary Stokes-type systems involving $p$-Laplacian is offered. The governing systems of equations are based on steady incompressible flow of a Newtonian fluids. This paper also provides a relatively complete picture of our main results in two regards: problems with nonlinearity is regular with respect to the gradient variable; and asymtotically regular problems, whose nonlinearity satisfies a particular structure near infinity. For such Stokes-type systems, we derive regularity estimates for both velocity gradient and its associated pressure in two special classes of function spaces: the generalized Lorentz and $\psi$-generalized Morrey spaces.
\medskip

\medskip

\noindent{\emph{Mathematics Subject Classification.}} 35D30, 35Q35, 76D03, 42B25, 46E30.

\medskip

\noindent {\emph{Keywords:}}  Regularity; Stokes system; Fractional maximal operators; Generalized Lorentz spaces, $\psi$-Morrey spaces; Distribution functions, Asymptotically regular problem.

\end{abstract} 

%\tableofcontents  
                  
%%%%%%%%%%%%%%%%%%%%%%%%%%%%%%%%%%%%%%%
\section{Introduction and results}\label{sec:intro}

%[Mot it ve phuong trinh Stokes la gi, ung dung the nao trong thuc te?]

In this paper, we are concerned with the study of regularity for weak solutions to stationary Stokes system involving the $p$-Laplace operator under non-homogeneous Dirichlet boundary condition:
\begin{align}\label{eq:Stokes-p}
\begin{cases}
-\mathrm{div} \left((\sigma^2+|\nabla \mathbf{u}|^2)^{\frac{p-2}{2}}\nabla \mathbf{u}\right) + \nabla \mathrm{\pi} &= -\mathrm{div}\left(|\mathbf{f}|^{p-2}\mathbf{f}\right) \quad \ \text{in}\ \Omega, \\
 \hspace{2.5cm} \mathrm{div} (\mathbf{u}) &= \ 0  \hspace{2.48cm} \ \text{in}\ \Omega, \\
 \hspace{2.9cm} \mathbf{u} & = \ \mathbf{g} \hspace{2.58cm} \text{on}\ \partial\Omega, 
 \end{cases}
\end{align}
which is defined in an open bounded sub-domain $\Omega$ in $\mathbb{R}^d$, for $d \ge 2$ with non-smooth boundary; the source terms $\mathbf{f} \in L^p(\Omega;\mathbb{R}^{d^2})$, and $\mathbf{g} \in W^{1,p}(\Omega;\mathbb{R}^{d})$ with $p>1$. It is interesting to note that a new degeneracy parameter $\sigma \in [0,1]$ introduced to establish results in both degenerate/non-degenerate regimes: $\sigma=0$ for degenerate case and $\sigma \neq 0$ for the non-degenerate one. In this model, we consider a pair of unknown functions: $\mathbf{u}$ is the velocity vector $\mathbf{u}=(u^1, u^2, ..., u^d)^T:  \Omega \to \mathbb{R}^d$ and its associated pressure $\mathrm{\pi}: \Omega \to \mathbb{R}$.  More specifically, global regularity estimates for the weak solution pair of this problem are established in some generalized weighted function spaces via fractional maximal operators. 

The primary motivation for the study of this generalized Stokes system is in the realm of fluid mechanics with pressure- and shear-dependent viscosities. Problem~\eqref{eq:Stokes-p} is a generalization of the classical (linear) Stokes system governed by Laplace operator. A bit more precise, it is driven by the $p$-Laplacian with an additional degeneracy parameter $\sigma>0$. Also, it is a simplified stationary form of the generalization of nonsteady Navier-Stokes system without the convective term:
\begin{align*}
\frac{\partial \mathbf{u}}{\partial t} - \mathrm{div}\left((\sigma^2+|\nabla \mathbf{u}|^2)^{\frac{p-2}{2}}\nabla \mathbf{u}\right) + \nabla \mathrm{\pi} &= -\mathrm{div}\left(|\mathbf{f}|^{p-2}\mathbf{f}\right) \quad \ \text{in}\ \Omega, \\
\hspace{2.5cm} \mathrm{div} (\mathbf{u}) &= \ 0  \hspace{2.48cm} \ \text{in}\ \Omega,
\end{align*}
that is completed by the initial and boundary conditions. To be very closely linked to the $p$-Laplace equations, this system therefore has attracted a lot of attention of both physical and mathematical communities. For further readings as well as connections of Stokes systems with physical models, one could mention~\cite{FMR2005,MMS2005,FS2000,Solonikov2001} and references therein. A point worth emphasizing is that in the context of incompressible non-Newtonian fluids, as the power-law introduced by Lady\v{z}enskaya in~\cite{Ladyzenskaya}, deviatoric stress tensor may depend in a nonlinear way by $\mathbf{Du} = \frac{1}{2}\left(\nabla \mathbf{u} + \nabla^T \mathbf{u}\right)$-the symmetric part of the velocity gradient $\nabla\mathbf{u}$, instead of the full gradient $\nabla \mathbf{u}$ that we are interested in. From viewpoint of physical reality and fluid mechanics applications, the Stokes system depending on $\mathbf{Du}$ is more relevant. It appeared in famous models of incompressible non-Newtonian fluid flows such as blood, custard, ketchup, paint, toothpaste, etc (see~\cite{MRR1995, MR2005,CR2008} for physical models). However, the main viewpoint linking the results of this paper is that we extend the previous results~\cite{NP20,NP21a,NPN2022} into the disciplines of the stationary Navier-Stokes systems in the context of Newtonian fluids. With this strategy in mind, it is natural towards the forthcoming results, which aim to extend our main concern here to the flow of non-Newtonian fluids.

There have been notable research activities regarding solvability and the regularity properties of solutions to Stokes system~\eqref{eq:Stokes-p}. Notably, a special interest in the linear case $p=2$ has a wide-ranging applications in mathematical modeling and simulation of fluid flows. This specific case becomes an important subject studied by many authors. The early analysis of such system traced back to Lady\v{z}enskaya~\cite{Ladyzenskaya}, Lions~\cite{Lions1996} and Sobolevski\v{i}~\cite{Sobolevskii}. Later, regularity theory of this system has received a lot of attention by multiple authors. Regarding the classical formulation of Stokes, the set of results be presented in previous papers~\cite{Breit2013,Dong2018,DK2018_2,MR2006,MR2007} have been driven for both and global estimates in various functional settings. Recently, this line of research has subsequently been ongoing to develop for generalized stationary Stokes systems. To mention a few, the reader is referred to~\cite{DJS2011,DKS2014,GM82} that have dealt with the interior Morrey and Campanato regularity estimates for Stokes systems driven by $p$-Laplacian. Further, global weighted and unweighted $L^q$-estimates have studied in~\cite{DK2013,BS2017}, or~\cite{GS2015} for $W^{1,p}$-estimates,~\cite{Macha2011} for H\"older regularity, and we also address our recent work in~\cite{NPN2022} for the generalized Lorentz and $\psi$-generalized Morrey estimates in the more general linear case. Other progress on the stationary Stokes system has been achieved in the last few years, we herein refer the interested reader to~\cite{Shen1995,GSS1994,FKV1988,CL2017} for further discussion in related contexts. 

%[Muc dich cua bai bao: nghien cuu gradient regularity cua bai toan Stokes: trong hai khong gian tong quat nao][Mot ket qua khac cung can duoc nhan manh trong bai bao: ket qua danh gia cho bai toan asymptotic]
There are two main topics in this paper: on the one hand, this is a continuation of previous work~\cite{NPN2022}, we obtain global Calder\'on-Zygmund-type estimates for gradient of weak solution and associated pressure to~\eqref{eq:Stokes-p} in both the generalized Lorentz spaces with Muckenhoupt weights and $\psi$-generalized Morrey spaces. More than that, our results shown here also hold for a more general class of nonlinearities and operators of elliptic type in Stokes systems (with degeneracy $\sigma>0$) than the one in~\eqref{eq:Stokes-p}. Therefore, it is natural to investigate a more general stationary Stokes-type system with quasilinear elliptic operators of the type
\begin{align}\label{eq:Stokes}
\begin{cases}
-\mathrm{div} \left(\mathbf{A}(x,\nabla \mathbf{u})\right) + \nabla \mathrm{\pi} &= -\mathrm{div}\left(\mathbf{B}(x,\mathbf{f})\right) \quad \, \text{in}\ \Omega, \\
 \hspace{1.5cm} \mathrm{div} (\mathbf{u}) &= \ 0  \hspace{2.46cm} \ \text{in}\ \Omega, \\
 \hspace{1.9cm} \mathbf{u} & = \ \mathbf{g} \hspace{2.56cm} \text{on}\ \partial\Omega,
 \end{cases}
\end{align}
where $\mathbf{A}:  \Omega \times \mathbb{R}^{d^2} \to \mathbb{R}^{d^2}$ is a $C^1$-Carath\'eodory matrix-valued function satisfying the following ellipticity and boundedness conditions: there exist $p>1$, $\Upsilon>0$ such that
\begin{align}\label{cond:A1}
 \left| \mathbf{A}(x,\nu) \right| + |\partial_{\nu} \mathbf{A}(x,\nu)||\nu| & \le \Upsilon \left(\sigma^2 + |\nu|^2 \right)^{\frac{p-1}{2}}, \\
\label{cond:A2}
\left\langle \mathbf{A}(x,\nu_1)-\mathbf{A}(x,\nu_2),  \nu_1 - \nu_2 \right\rangle & \ge \Upsilon^{-1} \left(\sigma^2 + |\nu_1|^2 + |\nu_2|^2 \right)^{\frac{p-2}{2}}|\nu_1 - \nu_2|^2,
\end{align}
for any $\nu$, $\nu_1$, $\nu_2$ in $\mathbb{R}^{d^2}$  with a given constraint $\sigma^2 + |\nu_1|^2 + |\nu_2|^2 \neq 0$ and almost every $x$ in $\Omega$, and a fixed parameter $\sigma \in [0,1]$ allows us to discriminate between degenerate and non-degenerate cases. Regarding the non-degenerate case, it can be reduced to $\sigma=1$ by changing the value of $\Upsilon$ in~\eqref{cond:A1} and~\eqref{cond:A2}. The nonlinearity $\mathbf{A}$ is modeled upon the following matrix-valued function 
$$\mathbf{A}(x,\nu) = (\sigma^2+|\nu|^2)^{\frac{p-2}{2}}\nu, \quad (x,\nu) \in \Omega \times \mathbb{R}^{d^2},$$ that gives rise to the degenerate/non-degenerate $p$-Laplacian system~\eqref{eq:Stokes-p}. For the right-hand side divergence form, in dimensional analogy with~\eqref{cond:A1}, we have a Carath\'eodory matrix-valued function $\mathbf{B}: \Omega \times \mathbb{R}^{d^2} \to \mathbb{R}^{d^2}$ satisfying the growth assumption
\begin{align}\label{cond:Bf}
 \left| \mathbf{B}(x,\nu) \right| & \le \Upsilon \left(\sigma^2 + |\nu|^2 \right)^{\frac{p-1}{2}},
\end{align}
for every $x \in \Omega$ and $\nu \in \mathbb{R}^{d^2}$. It is to be noticed that in the general context of quasilinear elliptic equations (involving the $p$-Laplacian), there have been a large number of studies falling into the scope of regularity theory by making the use of different approaches. Here, we send the reader to~\cite{AM2007,BW2004,CP1998,Giusti2003,DJ1993,Min2010,Min2011,Iwaniec,Tran2019,PN20a} and references therein for results as well as technical tools associated to it. 

Before formulating the main result regarding the first topic, let us remind the reader the definition of the weak solution to the system. For the uniqueness purposes, we consider a pair of weak solutions $(\mathbf{u},\mathrm{\pi})$ to~\eqref{eq:Stokes} in two sets as below, respectively:
\begin{align}\notag %\label{def:X}
W^{1,q}_{0,\mathrm{div}}(\Omega;\mathbb{R}^d) := \left\{\mathbf{v} \in W^{1,q}_0(\Omega;\mathbb{R}^d): \ \mathrm{div}(\mathbf{v}) = 0  \mbox{ in }  \Omega\right\},
\end{align}
for $q >1$, and 
\begin{align}\notag %\label{def:L0}
L^q_{\mathrm{int}}(\Omega) := \left\{\varphi \in L^q(\Omega): \ \int_{\Omega} \varphi(z)dz = 0\right\}.
\end{align}
Further, for every $\mathbf{h} \in W^{1,q}(\Omega;\mathbb{R}^d)$, we also define here 
$$W^{1,q}_{\mathbf{h},\mathrm{div}}(\Omega;\mathbb{R}^d) := \mathbf{h} + W^{1,q}_{0,\mathrm{div}}(\Omega;\mathbb{R}^d)$$ 
for the sake of readability. The notion of weak solution pair to our system~\eqref{eq:Stokes} is well tailored for our purpose: first handle with the gradient weak solutions leaving the pressure as an obstacle and then return to find the associated pressure from the frame of functional analysis stemming from the system. The idea is patterned on the approach of~\cite{BS2017, DK2018_2, Dong2018}. 
\begin{definition}
Let $\mathbf{f} \in L^p(\Omega;\mathbb{R}^{d^2})$ and $\mathbf{g} \in W^{1,p}(\Omega;\mathbb{R}^{d})$. We shall say that a pair 
$$(\mathbf{u},\mathrm{\pi}) \in W^{1,p}_{\mathbf{g},\mathrm{div}}(\Omega;\mathbb{R}^d) \times L^{p'}_{\mathrm{int}}(\Omega), \quad p' := \frac{p}{p-1},$$ is a weak solution to~\eqref{eq:Stokes} if it satisfies the following properties:
\begin{itemize}
\item[(i)] For any test function $\varphi \in W^{1,p}_{0,\mathrm{div}}(\Omega;\mathbb{R}^d)$, it holds
\begin{align}\label{var-form1}
\int_{\Omega} \langle \mathbf{A}(x, \nabla \mathbf{u}), \nabla \varphi \rangle dx = \int_{\Omega} \langle \mathbf{B}(x,\mathbf{f}), \nabla \varphi \rangle dx.
\end{align}
\item[(ii)] Associated with a concrete weak solution $\mathbf{u}$ satisfying~\eqref{var-form1}, it also holds
\begin{align}\label{var-form2}
\int_{\Omega} \langle \mathbf{A}(x, \nabla \mathbf{u}), \nabla \varphi \rangle dx - \int_{\Omega} \mathrm{\pi}\mathrm{div}(\varphi) dx = \int_{\Omega} \langle \mathbf{B}(x,\mathbf{f}), \nabla \varphi \rangle dx,
\end{align}
for any test function $\varphi \in W^{1,p}_0(\Omega;\mathbb{R}^d)$. Then, $\mathrm{\pi}$ is so-called the associated pressure of $\mathbf{u}$.
\end{itemize}
\end{definition}

To the question of clarifying the existence of weak solution pair $(\mathbf{u},\mathrm{\pi})$ to this problem on a bounded domain under some appropriated structure conditions, we send the reader to~\cite{Ladyzenskaya1,Lions1969, Ladyzenskaya2,DRW2010} or a large number of connections with similar topics for further research or reading. As aforementioned, our results fall into two categories: in the setting of generalized Lorentz spaces and $\psi$-generalized Morrey spaces.  It is known that the classical Lorentz spaces $L^{\mathfrak{s},\mathfrak{t}}$ originated in~\cite{Lorentz1950}, can be seen as generalization of classical Lebesgue spaces and have become fashionable in mathematical analysis in years. The generalized Lorentz spaces we mention in this paper, denoted by $\mathcal{L}^{\mathfrak{s},\mathfrak{t}}_{\mu,\omega}(\Omega)$, also known as \emph{weighted Lorentz spaces with two parameters}, as one of the normability result for weighted Lorentz spaces. Theory of such generalized Lorentz spaces can be traced back as far as the works of Carro and Soria in~\cite{CS1993} and further developed in~\cite{CS1997, CRS07} (see Section~\ref{sec:notationanddef} below for the definition). Next, as a consequence, it comes out a global bound for solutions in generalized Morrey spaces. To our knowledge, there have been various known types of generalizations of classical Morrey spaces, see for instance~\cite{AM1997, MS1979,RSS2013,DN1982} and the references therein. And in this regard, we devote a special attention to the $\psi$-generalized Morrey spaces,  often denoted as $\mathrm{M}^{s,\psi}(\Omega)$, and will also be lightened below. 

A point worth emphasizing in our study is the use of \emph{weighted fractional maximal distribution functions} (WFMDs) for resulting estimates (see Definition~\ref{def:wfmds} from below). The method of using WFMDs is a key ingredient that mainly based on level-set inequalities involving distribution functions and properties of fractional maximal operators $\mathcal{M}_\alpha$. This technique arose in our study when providing regularity in terms of $\mathcal{M}_\alpha$, cf.~\cite{PN20a,PN20b,PN19b}. The approach relies on estimates over level-sets is motivated by a standard technique proposed by  by Acerbi and Mingione in~\cite{AM2007} and related ideas in~\cite{BW2008,Min2010, Min2011,PN19a,Tran2019}. To be more specific about this technique, one establishes a level-set inequality involving fractional maximal operators and then, it allows us to control the level sets of fractional maximal function of gradient of solutions with the level sets of that of data. The idea of using WFMDs is naturally posed to recover integrability information of solutions by that of the given data and it valids for a wide range of rearrangement invariant quasi-normed spaces. Therefore, it is shown that the approach with WFMDs is general enough to be applicable to a larger class of nonlinear equations/systems, see~\cite{Min2010,NP21a} for further reading.

In the sequel, we shall state the theorem of estimates on level sets via WFMDs, Theorem~\ref{theo:dist}. It plays a key tool to establish regularity results in generalized weighted function spaces: Theorem~\ref{theo:main} and~\ref{theo:improv}, as we shall see in the following pages.

Second topic of the present paper, and probably more interestingly: we turn our attention to the generalized Stokes problems with asymptotically regular operators. We carry on the investigation about the regularity estimates for weak solutions to \emph{asymptotically regular} problem of the type
\begin{align}\label{eq:asym}
\begin{cases}
-\mathrm{div} \left(\mathbf{a}(x,\nabla \mathbf{u})\right) + \nabla \mathrm{\pi} &= -\mathrm{div}\left(\mathbf{B}(x,\mathbf{f})\right) \quad \, \text{in}\ \Omega, \\
 \hspace{1.5cm} \mathrm{div} (\mathbf{u}) &= \ 0  \hspace{2.45cm} \ \text{in}\ \Omega, \\
 \hspace{1.9cm} \mathbf{u} & = \ \mathbf{g} \hspace{2.55cm} \text{on}\ \partial\Omega.
 \end{cases}
\end{align} 
Here, we are interested in $\mathbf{a}: \Omega \times \mathbb{R}^{d^2} \to \mathbb{R}^{d^2}$, a Carath\'edory vector-valued function, that is so-called asymptotically $\delta_0$-regular with operator $\mathbf{A}$ in~\eqref{eq:Stokes} in the sense that there exists $\delta_0>0$ small enough such that 
\begin{align*}
\limsup_{|\nu| \to \infty}\frac{|\mathbf{a}(x,\nu)-\mathbf{A}(x,\nu)|}{(\sigma^2+|\nu|^2)^{\frac{p-1}{2}}} \le \delta_0,
\end{align*}
uniformly with respect to $x \in \Omega$. In this case, it allows $\mathbf{a}(x,\nu)$ to get close to regular operator $\mathbf{A}(x,\nu)$ as $|\nu|$ tends to infinity. To be more specific, the notion of \emph{asymtotically $\delta_0$-regular problem} will be captured in Definition~\ref{def:asymp} below. The notion of ``asymptotically regular operator'' was introduced in pioneristic paper~\cite{CE1986} by Chipot and Evans when they studied Lipschitz regularity for minimizers of asymptotically convex integrals and $p=2$. Later, Raymond obtained results for asymptotically convex functionals with $p$-growth in~\cite{Raymond1991}; and Foss~\cite{Foss2008} extended these Lipschitz estimates up to the boundary. Recently, Calder\'on-Zygmund theory and partial Lipschitz regularity for asymptotically regular elliptic systems and minimizers have been established separately by Scheven and Schmidt in~\cite{SS2010_I,SS2009_II}. Afterwards, as far as the asymptotic structure is concerned, higher integrability results for solutions to parabolic systems of $p$-Laplacian type have been investigated in~\cite{KM2012,Isernia2015}. More recent contributions include the works~\cite{BOW2015,BCO2015}, where the authors presented global results for asymptotically regular elliptic and parabolic problems in Reifenberg flat domain. The second aim of this paper is to address further the asymptotic regularity for generalized stationary Stokes systems~\eqref{eq:asym}. Notably, as in the previous topic, global gradient bounds via fractional maximal operators $\mathcal{M}_\alpha$ are also studied in the setting of generalized Lorentz and $\psi$-Morrey spaces. As far as we know, not much has been known regularity results concerning the asymptotically regular models regarding the Stokes systems. Though problem in mind is related to the regular one in~\eqref{eq:Stokes}, we shall prove them as stand alone results in this paper.

In this connection, as an application of the first topic in this paper,  theorem of asymptotic regularity will be derived as a consequence of global gradient bounds achieved in Theorems~\ref{theo:main} and~\ref{theo:improv}, as we are going to see in Theorem~\ref{theo:asym} described below.

The rest of this section is devoted to clarify some notation, definitions, assumptions and a few conventions that used throughout this paper. At the same time, we are also ready to state the main results of this paper (formal version); see below for the precise statements.

\subsection{Notation and Definitions}
\label{sec:notationanddef}

In what follows, constants are generically denoted by $C$ whose values are larger or equal than 1 and need not be the same at each point of use; when needed, peculiar dependence on parameters will be indicated using parenthesis. In addition, $B_R(x_0)$ denotes the open ball in $\mathbb{R}^d$ with center $x_0$ and radius $R>0$; and we write $|\mathfrak{B}|$ in place of $d$-dimensional Lebesgue measure of a measurable subset $\mathfrak{B} \subset \mathbb{R}^d$. Moreover, if $|\mathfrak{B}|$ is positive, the integral average of $\mathsf{g} \in L^1_{\mathrm{loc}}(\mathbb{R}^d)$ over $\mathfrak{B} \subset \mathbb{R}^d$ is defined by
$$
\overline{\mathsf{g}}_{\mathfrak{B}} = \fint_{\mathfrak{B}}{\mathsf{g}(x)dx} := \frac{1}{|\mathfrak{B}|} \int_{\mathfrak{B}}{\mathsf{g}(x)dx}.
$$
With a slight abuse of notation, we shall adopt $\{|\mathsf{g}|>\lambda\}$ instead of $\{x \in \Omega: |\mathsf{g}(x)| > \lambda\}$. 

There are some standing assumptions on regularity of the domain and of data.  For instance, domain $\Omega$ we consider in the problem may be nonsmooth, but in order to prove estimates up-to-boundary, $\partial\Omega$ is required to be flat in some sense. Throughout our study, to work with generalized Stokes system~\eqref{eq:Stokes}, we assume that $\Omega$ is a Reifenberg flat domain. Further, combined with the flatness of $\partial\Omega$, the nonlinearity coefficient $\mathbf{A}$ is also assumed to satisfy small BMO (bounded mean oscillation) condition. We shall shed on the light of these both assumptions, given below.

\begin{taggedassumption}{$\mathbf{H_1}$}
\label{ass:Reif}
The domain $\Omega$ is considered to be an $(\delta_0,R_0)$-Reifenberg flat domain for a given $R_0>0$ and a small positive constant $\delta_0>0$. That means, for any $x \in \partial \Omega$ and $\varrho \in (0, (1-\delta_0)R_0)$, there exists a coordinates system $\{\tilde{x}_1, \tilde{x}_2,\cdots,\tilde{x}_d\}$ such that the new origin $O \in \Omega$, $x = - \delta_0 \varrho/(1-\delta_0)\tilde{x}_d$ and
\begin{align}\label{cond:Reif}
B_{\varrho}(O) \cap \{\tilde{x}_d > 0\} \subset B_{\varrho}(O) \cap \Omega \subset B_{\varrho}(O) \cap \{\tilde{x}_d > -2 \delta_0 \varrho/(1-\delta_0) \}.
\end{align}
Here the notation $\{\tilde{x}_d > \tau\}$ presents to the set $\{\tilde{x} = (\tilde{x}_1, \tilde{x}_2,\cdots, \tilde{x}_d): \ \tilde{x}_d > \tau\}$.
\end{taggedassumption}

\begin{remark}
\label{rem:H1}
Assumption~\ref{ass:Reif} requires a geometric condition for domain $\Omega$. It prescribes that at all scales, the boundary  $\partial\Omega$ can be trapped between two hyperplanes, depending on the chosen scale. Furthermore, in Reifenberg flatness condition~\eqref{cond:Reif}, the parameter $\delta_0>0$ is a small constant and invariant under scaling.  However, this  geometric condition is meaningful when $0<\delta_0<1/8$ and the upper bound $1/8$ is chosen small enough to encode the idea of flatness at infinity. Readers may consult~\cite{Toro}, a very interesting source of Reifenberg flatness, for a more detailed exposition. The small value of $\delta_0$ will allow us to measure the deviation of $\partial\Omega$ at each scale $\varrho>0$. In this paper, the choice of $\delta_0$ will be specified later so that~\eqref{ineq-main} and~\eqref{GF-norm} in our main results hold true for all $\mathfrak{s} \in (0,\infty)$ (see Theorem~\ref{theo:main} and Theorem~\ref{theo:improv}). On the other hand, $R_0$ can be any number whose value is larger than 1 by the scaling invariance of the problem (see Lemma~\ref{lem:boundary}). Here, it also remarks that by scaling argument, $\delta_0>0$ is independent of $R_0$ for $R_0>1$.
\end{remark}

\begin{taggedassumption}{$\mathbf{H_2}$}
\label{ass:bmo}
For the couple $(\delta_0,R_0)$ specified as above, the matrix-valued function $\mathbf{A}$ satisfies the following condition
\begin{align}\label{BMO}
[\mathbf{A}]_{R_0} := \sup_{y \in \mathbb{R}^d, \ 0<\varrho\le R_0} \fint_{B_{\varrho}(y)} \left( \sup_{\nu \in \mathbb{R}^{d^2} \setminus \{0\}} \frac{|\mathbf{A}(x,\nu) - \overline{\mathbf{A}}_{B_{\varrho}(y)}(\nu)|}{(\sigma^2 +|\nu|^2)^{\frac{p-1}{2}}} \right) dx \le \delta_0,
\end{align}
where $\overline{\mathbf{A}}_{\mathfrak{B}}(\nu)$ stands for the average of $\mathbf{A}(\cdot,\nu)$ over the measurable set $\mathfrak{B}$.
\end{taggedassumption}

As aforementioned, these two assumptions will always be coupled in our work. And for the convenience of the reader, when both conditions~\ref{ass:Reif} and~\ref{ass:bmo} are imposed to domain $\Omega$ and the leading coefficients $\mathbf{A}$, we will often write $(\Omega,\mathbf{A}) \in \mathcal{H}(\delta_0,R_0)$. After remarkable works in~\cite{BW2004,BW2008}, there have been a series of developments leading to a set of results regarding global regularity estimates for boundary value problems under this geometric feature of $\Omega$ and hypothesis of $\mathbf{A}$. Remarkably, Reifenberg-flatness is a natural and minimal setting on geometrical structure of domain to prove boundary regularity theory for PDEs, among various concepts such as non-tangentially accessible (NTA) domain, uniform and John domains.  Here, we employ the use of specific assumption: $(\Omega,\mathbf{A}) \in \mathcal{H}(\delta_0,R_0)$ to generalized problem~\eqref{eq:Stokes} in conducting regularity results on the whole domain. Further, for notational purpose, we regard ``$\mathtt{data}_0$'' for the set of prescribed parameters of dependence. For instance,
\begin{align*}
\mathtt{data}_0 \equiv \mathtt{data}_0(d,p,\sigma,\Upsilon,\alpha, \Omega, R_0),
\end{align*}
so that when we write $C(\mathtt{data}_0)$ it means $C$ depends upon $\mathtt{data}_0$. 

Fractional maximal operators and distribution functions of them are the key tools of our argument. These are classical operators of interest in harmonic analysis and the theory of partial differential equations,~\cite{Grafakos}. As we shall see, in this study, we derive some new regularity results for solutions via fractional maximal functions. So, what is our purpose working with $\mathcal{M}_\alpha$? - It is known that, the oscillation of a function in Sobolev spaces can be controlled in terms of the fractional maximal function of its gradient. Therefore, via $\mathcal{M}_\alpha$, it allows us to derive both the size bounds for solutions and/or their derivatives, especially the fractional derivatives of order $\alpha$ (properties of solutions in fractional Sobolev spaces). 

Let us now recall here the definition for the reader's convenience. 
\begin{definition}[Fractional maximal operators]\label{def:Malpha}
Given $\alpha \in [0, d]$, the fractional maximal operator $\mathcal{M}_\alpha$ is defined by
\begin{align}\notag %\label{eq:Malpha}
\mathcal{M}_\alpha \mathsf{g}(x) = \sup_{\varrho>0}{\varrho^\alpha \fint_{B_\varrho(x)}{|\mathsf{g}(z)|dz}}, 
\end{align}
for $x \in \mathbb{R}^d$ and $\mathsf{g} \in L^1_{\mathrm{loc}}(\mathbb{R}^d)$. Notably, as $\alpha=0$, it coincides with the Hardy-Littlewood operator, i.e. $\mathcal{M}_0 \equiv \mathcal{M}$.
\end{definition}

We also include here one of the most important piece of our interest that regarding $\mathcal{M}_\alpha$: the boundedness property, given in the following lemma. Obviously, such the result allows us to conclude in case $\alpha=0$. See ~\cite[Lemma 3.3]{NP21a} for detailed proof.
\begin{lemma}\label{bound-M-beta}
For every $\alpha \in [0, d)$, one can find a constant $C=C(d,\alpha)>0$ such that  the following inequality
\begin{align*}
|\{x \in \mathbb{R}^d: \ \mathcal{M}_\alpha \mathsf{g}(x)>\lambda\}| \le  C \left(\frac{1}{\lambda}\int_{\mathbb{R}^d}|\mathsf{g}(z)| dz\right)^{\frac{d}{d-\alpha}}
\end{align*}
holds for all $\lambda>0$ and $\mathsf{g} \in L^{1}(\mathbb{R}^d)$.
\end{lemma}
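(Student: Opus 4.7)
My plan is to carry out the classical Vitali-style covering argument that produces weak-type bounds for (fractional) maximal operators. Fix $\lambda>0$ and set $E_\lambda := \{x \in \mathbb{R}^d : \mathcal{M}_\alpha \mathsf{g}(x) > \lambda\}$. For every $x \in E_\lambda$, by definition of $\mathcal{M}_\alpha$ there is a radius $r_x > 0$ with
\[
r_x^\alpha \fint_{B_{r_x}(x)} |\mathsf{g}(z)|\,dz > \lambda,
\]
which rearranges to $\omega_d\,r_x^{d-\alpha} < \lambda^{-1}\int_{B_{r_x}(x)} |\mathsf{g}(z)|\,dz$, where $\omega_d = |B_1(0)|$. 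Since $\alpha < d$, this gives the uniform radius bound $r_x \le \bigl(\omega_d^{-1}\lambda^{-1}\|\mathsf{g}\|_{L^1}\bigr)^{1/(d-\alpha)}$, which is what one needs in order to apply a Vitali-type covering theorem.

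Next I would apply the Vitali covering lemma to the family $\{B_{r_x}(x)\}_{x\in E_\lambda}$ to extract an at-most-countable, pairwise disjoint subfamily $\{B_{r_i}(x_i)\}_i$ such that the dilated balls $\{B_{5r_i}(x_i)\}_i$ still cover $E_\lambda$. Then
\[
|E_\lambda| \le \sum_i |B_{5r_i}(x_i)| = 5^d \omega_d \sum_i r_i^{d}
     = 5^d \omega_d \sum_i \bigl(r_i^{d-\alpha}\bigr)^{\frac{d}{d-\alpha}}
     \le \frac{5^d \omega_d}{(\omega_d \lambda)^{d/(d-\alpha)}}
        \sum_i \left(\int_{B_{r_i}(x_i)} |\mathsf{g}(z)|\,dz\right)^{\frac{d}{d-\alpha}}.
\]

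To finish I would invoke the elementary inequality $\sum_i a_i^{q} \le \bigl(\sum_i a_i\bigr)^{q}$, valid for $a_i \ge 0$ and $q \ge 1$, with $q = d/(d-\alpha) \ge 1$. Combined with the disjointness of the balls $B_{r_i}(x_i)$, this yields
\[
\sum_i \left(\int_{B_{r_i}(x_i)} |\mathsf{g}(z)|\,dz\right)^{\frac{d}{d-\alpha}}
\le \left(\sum_i \int_{B_{r_i}(x_i)} |\mathsf{g}(z)|\,dz\right)^{\frac{d}{d-\alpha}}
\le \|\mathsf{g}\|_{L^1(\mathbb{R}^d)}^{\frac{d}{d-\alpha}},
\]
and plugging back into the previous display produces the required inequality with $C = 5^d \omega_d^{1 - d/(d-\alpha)} = 5^d \omega_d^{-\alpha/(d-\alpha)}$, which depends only on $d$ and $\alpha$.

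The argument has no really hard step; the only points that demand a little care are checking that the uniform bound on $r_x$ is what legitimises the Vitali extraction (otherwise one would need a Besicovitch-type replacement), and verifying the super-additivity inequality $\sum a_i^{q} \le (\sum a_i)^{q}$ for $q\ge 1$ — both of which are standard. If one wished to avoid Vitali entirely, an equivalent route would be to first reduce $E_\lambda$ to a compact subset and then use the Besicovitch covering theorem, absorbing its dimensional constant into $C(d,\alpha)$.
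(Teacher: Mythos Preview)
Your argument is correct and complete: the uniform radius bound, the Vitali extraction, and the super-additivity step $\sum a_i^{q}\le(\sum a_i)^{q}$ for $q=d/(d-\alpha)\ge 1$ all hold as stated, and the constant you compute depends only on $d$ and $\alpha$. The paper does not supply its own proof of this lemma---it merely cites an external reference (\cite[Lemma~3.3]{NP21a})---so there is no in-paper argument to compare against; your Vitali covering proof is the standard one and would serve perfectly well in its place.
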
 

\subsection{A key point estimate: level-set inequality}

In order to state main results in this paper, we organize to specific subsections. The first one forms a key tool to prove regularity results, in which we establish a weighted distribution inequality on level sets of $\mathcal{M}_\alpha$ of the gradient of solutions and the assigned data, associated to a Muckenhoupt weight $\omega \in \mathcal{A}_{\infty}$. This result is based on the idea of \emph{weighted fractional maximal distribution functions}, that we are going to detail in a few lines below.

\begin{definition}[$\mathcal{A}_p$ classes and Muckenhoupt weights]\label{def:Muck}
Let $1\le q < \infty$ and  $\omega \in L^1_{\mathrm{loc}}(\mathbb{R}^d)$ be a non-negative function. We say that $\omega$ belongs to the $\mathcal{A}_q$ class if $[\omega]_{\mathcal{A}_q}<\infty$, where
\begin{align*}
[\omega]_{\mathcal{A}_q} := \begin{cases} \displaystyle{\sup_{B \subset \mathbb{R}^d} \left(\fint_{B} \omega(z) dz\right) \sup_{z \in B} [\omega(z)]^{-1}}, & \quad \mbox{ if } q=1, \\
 \displaystyle{\sup_{B \subset \mathbb{R}^d} \left(\fint_{B} \omega(z) dz\right)\left(\fint_{B} [\omega(z)]^{-\frac{1}{q-1}}dz\right)^{q-1}}, & \quad \mbox{ if } q>1,
 \end{cases}
\end{align*}
for any ball $B =B_r(x)$ in $\mathbb{R}^d$. 
\end{definition}
As in~\cite{BS2017}, it is well known that when $\omega \in \mathcal{A}_{\infty}$, there are positive constants $C_1,C_2$ and $\iota_1,\iota_2$ such that
\begin{align}\label{ineq-Muck}
C_1 \left(\frac{|E|}{|B|}\right)^{\iota_1} {\omega(B)} \le {\omega(E)} \le C_2 \left(\frac{|E|}{|B|}\right)^{\iota_2} {\omega(B)},
\end{align}
for all measurable subset $E$ of the ball $B$, where $\omega(E) = \int_E \omega(z) dz$. Then, we shall simply write $[\omega]_{\mathcal{A}_\infty} = (C_1,C_2,\iota_1,\iota_2)$ and for the rest of paper when dealing with Muckenhoupt weights, we further define 
\begin{align}\notag
\mathtt{data}_1 := (\mathtt{data}_0,[\omega]_{\mathcal{A}_\infty}).
\end{align}

\begin{definition}[Weighted fractional distribution functions (WFMDs)]
\label{def:wfmds}
Let $\omega \in \mathcal{A}_{\infty}$ and $\mathsf{g} \in L^1_{\mathrm{loc}}(\mathbb{R}^d)$. The weighted distribution function $\mathbf{d}^{\omega}_{\mathsf{g}}: \, \mathbb{R}^+ \to \mathbb{R}^+$ is defined as follows:
\begin{align}\notag %\label{df}
\mathbf{d}^{\omega}_{\mathsf{g}}(\lambda) & := \omega\left(\left\{x \in \Omega: \ |\mathsf{g}(x)|> \lambda\right\}\right), \quad \text{for all } \ 0 \le \lambda <\infty.
\end{align}
Given $\alpha \in [0,d)$, the weighted fractional maximal distribution function (WFMD), denoted by $\mathbf{D}^{\alpha,\omega}_{\mathsf{g}}: \, \mathbb{R}^+ \to \mathbb{R}^+$ as
\begin{align}\label{def-Df}
\mathbf{D}^{\alpha,\omega}_{\mathsf{g}}(\lambda) & := \mathbf{d}^{\omega}_{\mathcal{M}_{\alpha}\mathsf{g}}(\lambda),  \quad \text{for all } \ 0 \le \lambda <\infty.
\end{align}
\end{definition}

With a view to avoid repetition and also for the sake of brevity, in what follows we shall initially assume that $(\mathbf{u},\mathrm{\pi}) \in W^{1,p}_{\mathbf{g},\mathrm{div}}(\Omega;\mathbb{R}^d) \times L^{p'}_{\mathrm{int}}(\Omega)$ is a weak solution pair to system~\eqref{eq:Stokes} under the assumptions~\eqref{cond:A1}-\eqref{cond:Bf}, with given data $\mathbf{f} \in L^p(\Omega;\mathbb{R}^{d^2})$ and $\mathbf{g} \in W^{1,p}(\Omega;\mathbb{R}^{d})$. In the context, we moreover define
\begin{align}\notag %\label{def:UF}
\mathbb{U}(x) := |\nabla \mathbf{u}|^p + |\mathrm{\pi}|^{p'}, \quad \mathbb{F}_{\sigma}(x) := \sigma^p + |\mathbf{f}|^p + |\nabla \mathbf{g}|^p, \quad x \in \Omega.
\end{align}
and use the abbreviations $\mathbb{X}, \mathbb{Y}: [0,\infty) \to [0,\infty)$ for the two distribution functions as following
\begin{align}\label{def:XY}
\mathbb{X}(\lambda) := \mathbf{D}^{\alpha,\omega}_{\mathbb{U}} (\lambda), \quad \mathbb{Y}(\lambda) := \mathbf{D}^{\alpha,\omega}_{\mathbb{F}_{\sigma}} (\lambda), \quad \lambda \ge 0.
\end{align}

Now, we are ready to state the first main result of this work: level-set inequality on the idea of WFMDs, a technical tool in proving our regularity theorems.
\begin{theorem}\label{theo:dist}
There is a constant $\mathfrak{a} = \mathfrak{a}(\mathtt{data}_1)>0$ such that for every $\varepsilon \in (0,1)$, one can find $\delta_0=\delta_0(\varepsilon,\mathtt{data}_1) \in (0,1/81)$, $\mathfrak{b} = \mathfrak{b}(\varepsilon,\mathtt{data}_1)>0$ and $C=C(\mathtt{data}_1)>0$ satisfying
\begin{align}\label{ineq-dist}
\mathbb{X}(\mathfrak{a}\lambda) \le C \left[\varepsilon \mathbb{X}(\lambda) +  \mathbb{Y}(\mathfrak{b}\lambda)\right], 
\end{align}
if provided an additional assumption $(\Omega,\mathbf{A}) \in \mathcal{H}(\delta_0,R_0)$ for some $R_0>0$. 
\end{theorem}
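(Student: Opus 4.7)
The plan is to prove \eqref{ineq-dist} by a good-$\lambda$/density argument performed on sub-level balls, of the type initiated by Acerbi--Mingione and adapted to the Stokes setting in the authors' earlier papers. Informally, one wants to show that at every point $x$ where $\mathcal{M}_\alpha \mathbb{U}(x)$ is ``very large'' (above $\mathfrak{a}\lambda$), either the data is already big (captured by $\mathbb{Y}(\mathfrak{b}\lambda)$) or one can extract a small factor $\varepsilon$ coming from a comparison with a regular reference problem. The factor $\mathfrak{a}$ is a universal dilation constant, $\mathfrak{b}$ records how close to the data level we must descend, and $\delta_0$ encodes the two smallness assumptions \ref{ass:Reif}--\ref{ass:bmo} that make the comparison quantitative.

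First I would fix $\lambda>0$, set $E_\lambda=\{\mathcal{M}_\alpha \mathbb{U}>\mathfrak{a}\lambda\}\cap\Omega$ and $F_\lambda=\{\mathcal{M}_\alpha \mathbb{U}>\lambda\}\cup\{\mathcal{M}_\alpha \mathbb{F}_\sigma>\mathfrak{b}\lambda\}$, and apply a Vitali/Calder\'on--Zygmund cube (ball) decomposition to $E_\lambda$ at a scale comparable to $R_0$, producing a family of balls $\{B_{r_i}(x_i)\}$ with controlled overlap. On each ball I freeze the coefficient by replacing $\mathbf{A}(x,\cdot)$ with $\overline{\mathbf{A}}_{B_{r_i}(x_i)}(\cdot)$ and set $\mathbf{B}\equiv 0$, solving the associated stationary Stokes problem with Dirichlet data $\mathbf{u}$ on $\partial(B_{r_i}\cap\Omega)$ (or straightening the boundary in the Reifenberg case). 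Call $(\mathbf{v},\pi_\mathbf{v})$ the reference pair. Two ingredients then combine: (a) the homogeneous problem with frozen, BMO-smooth coefficient enjoys a global Lipschitz bound $\|\nabla\mathbf{v}\|_{L^\infty(B_{r_i/2})}^p+\|\pi_\mathbf{v}\|_{L^\infty(B_{r_i/2})}^{p'}\le C\,\fint_{B_{r_i}}\mathbb{U}\,dx$ (this is where the Reifenberg flatness with $\delta_0<1/81$ and the results borrowed from \cite{NPN2022,BS2017,DK2018_2} are invoked); (b) the energy/comparison inequality
\begin{align*}
\fint_{B_{r_i}}|\nabla(\mathbf{u}-\mathbf{v})|^p\,dx+\fint_{B_{r_i}}|\pi-\pi_\mathbf{v}|^{p'}\,dx\le C\,\eta(\delta_0)\fint_{B_{r_i}}\mathbb{U}\,dx+C\fint_{B_{r_i}}\mathbb{F}_\sigma\,dx,
\end{align*}
with $\eta(\delta_0)\to 0$ as $\delta_0\to 0$, which uses \eqref{cond:A1}--\eqref{cond:Bf}, the monotonicity \eqref{cond:A2}, and the BMO smallness \eqref{BMO}.

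With these two facts in hand, the level-set comparison is standard: inside each $B_{r_i}$ not intersecting $F_\lambda$, the $L^p$-size of $\mathbb{U}$ and $\mathbb{F}_\sigma$ is controlled by $\lambda$ and $\mathfrak{b}\lambda$ respectively; choosing $\mathfrak{a}$ large depending only on dimension and the Lipschitz constant of the reference problem, and then $\delta_0$ small so that $\eta(\delta_0)$ dominates $\varepsilon$, one obtains the Lebesgue density estimate $|E_{\mathfrak{a}\lambda}\cap B_{r_i}|\le \varepsilon\,|B_{r_i}|$. Summing with the Vitali covering and converting Lebesgue to $\omega$-measure via the $\mathcal{A}_\infty$ inequality \eqref{ineq-Muck} yields $\omega(E_{\mathfrak{a}\lambda})\le C\,\varepsilon^{\iota_2}\omega(E_\lambda)+\omega(\{\mathcal{M}_\alpha\mathbb{F}_\sigma>\mathfrak{b}\lambda\})$, which after relabelling $\varepsilon^{\iota_2}\mapsto\varepsilon$ is exactly \eqref{ineq-dist}. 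The fractional index $\alpha$ enters only through Lemma~\ref{bound-M-beta} when one must upgrade pointwise balls to truncated fractional maximal functions; the same localization trick used in \cite{PN20a,NP21a} handles it.

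The main obstacle I anticipate is the simultaneous treatment of $\nabla\mathbf{u}$ and $\pi$ in the comparison step, especially near $\partial\Omega$: unlike the scalar or pure-elliptic case, here the pressure $\pi$ is only defined up to additive constant and is recovered from $\mathbf{u}$ via a negative-norm/Ne\v{c}as or Bogovski\u\i{} inverse-divergence argument. One must choose the test functions in \eqref{var-form1}--\eqref{var-form2} to be divergence-free to kill $\pi$, then recover the pressure comparison separately by a Bogovski\u\i{} correction, while keeping all constants uniform in the flattened boundary chart dictated by Reifenberg flatness. Closing this loop quantitatively, so that the factor multiplying $\omega(E_\lambda)$ on the right-hand side is exactly the prescribed $\varepsilon$ (and so that $\delta_0$ can be taken smaller than $1/81$), is the delicate point.
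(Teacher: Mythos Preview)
Your proposal is correct and follows essentially the same strategy as the paper: a comparison with homogeneous reference Stokes problems whose solutions enjoy $L^\infty$ gradient/pressure bounds, a small-BMO/Reifenberg comparison estimate yielding the factor $\eta(\delta_0)$, and a density/covering argument that upgrades the local Lebesgue smallness to the weighted inequality~\eqref{ineq-dist} via~\eqref{ineq-Muck}. The only implementation differences are that the paper invokes a Calder\'on--Zygmund--Krylov--Safanov covering lemma (Lemma~\ref{lem:VCL}) in place of a direct Vitali argument, and performs the comparison in several nested steps (first to a homogeneous problem with boundary data $\mathbf{u}-\mathbf{g}$, then to a frozen-coefficient problem, and near $\partial\Omega$ through an additional half-ball limiting problem) rather than freezing everything at once.
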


This theorem has it own original version studied by Acerbi and Mingione in~\cite{AM2007} in the setting of parabolic equations. They initiated an iteration-covering technique, that is completely harmonic analysis free (for instance the good-$\lambda$-inequality) and avoids the use of the maximal function operator, only argued on certain Calder\'on-Zygmund-type covering lemma. Here, we exploit the level-set argument and distribution functions acting on fractional maximal operators to obtain a series of regularity results for gradient of solutions via $\mathcal{M}_\alpha$. The plan to prove this theorem is based on the use of reverse H\"older inequality, comparison scheme with suitable homogeneous problems and a standard covering argument. Specially, the structural assumptions on $\Omega$ and $\mathbf{A}$: $(\Omega,\mathbf{A}) \in \mathcal{H}(\delta_0,R_0)$ are imposed to carry results up to the boundary. For the proof we refer to Section~\ref{sec:proof}.

\subsection{Regularity estimates in generalized spaces}

As already alluded above, we shall focus our attention on the generalized weighted Lorentz and $\psi$-generalized Morrey spaces. Let us now be a bit more precise and describe how these spaces are defined.  First, the key feature of our approach is that we address global bounds on generalized Lorentz spaces with two mixed weights. Here, exploiting an idea given in~\cite{CRS07}, we consider a new weight $\mu \in L^1_{\mathrm{loc}}(\mathbb{R}^+;\mathbb{R}^+)$ and a non-decreasing function $\Psi$ given by
\begin{align}\label{def:V}
\Psi(r) = \int_0^r \mu(s) ds, \quad r \in [0,\infty),
\end{align}
which allows us to formulate the definition of a space with two weight functions $\omega \in \mathcal{A}_{\infty}$ and $\mu$ taken into account, as follows.
\begin{definition}[Generalized weighted Lorentz spaces]\label{def:Lorentz}
Let $\omega \in \mathcal{A}_{\infty}$ and $\mu$ be two weight functions defined in the above-mentioned. Given $\mathsf{g}$ a measurable function on $\Omega$ and $\mathfrak{s} \in (0,\infty)$, $0 <\mathfrak{t}\le \infty$, define
\begin{align*}
\|\mathsf{g}\|_{\mathcal{L}^{\mathfrak{s},\mathfrak{t}}_{\mu,\omega}(\Omega)} := \begin{cases} \left[\mathfrak{s} \displaystyle{\int_0^\infty \lambda^{\mathfrak{t}} \left[\Psi\left(\mathbf{d}^{\omega}_{\mathsf{g}}(\lambda)\right) \right]^{\frac{\mathfrak{t}}{\mathfrak{s}}} \frac{d\lambda}{\lambda}} \right]^{\frac{1}{\mathfrak{t}}},
\ & \mbox{ if } \mathfrak{t}<\infty,\\ 
\displaystyle{\sup_{\lambda>0} \left\{\lambda \left[\Psi\left(\mathbf{d}^{\omega}_{\mathsf{g}}(\lambda)\right)\right]^{\frac{1}{\mathfrak{s}}}\right\}},
\ & \mbox{ if } \mathfrak{t} = \infty. \end{cases}
\end{align*}
Then, the set of all functions $\mathsf{g}$ with $\|\mathsf{g}\|_{\mathcal{L}^{\mathfrak{s},\mathfrak{t}}_{\mu,\omega}(\Omega)}<\infty$, denoted by $\mathcal{L}^{s,t}_{\mu,\omega}(\Omega)$, is called the generalized weighted Lorentz spaces with indices $\mathfrak{s},\mathfrak{t}$ and two imposed weights $\mu,\omega$. 
\end{definition}
The following remarks are related to a few comments from Definition~\ref{def:Lorentz}.
\begin{remark} $ $
\begin{itemize}
\item  When $\omega \equiv \mu \equiv  1$, then $\mathcal{L}^{\mathfrak{s},\mathfrak{t}}_{\mu,\omega}(\Omega)$ is the classical Lorentz space $L^{\mathfrak{s},\mathfrak{t}}(\Omega)$. Even further, $L^{\mathfrak{s},\mathfrak{s}}(\Omega)$ is exactly the Lebesgue space $L^{\mathfrak{s}}(\Omega)$.
\item With $\mathfrak{t}=\infty$, $\mathcal{L}^{\mathfrak{s},\infty}_{\mu,\omega}(\Omega)$ is also called the generalized weighted Marcinkiewicz space.
\item The mapping $\|\cdot\|_{\mathcal{L}^{\mathfrak{s},\mathfrak{t}}_{\mu,\omega}(\Omega)}$ is a quasi-norm if and only if $\Psi$ belongs to $\Delta_2$-class, which means that there exists $\beta_2>0$ satisfying
\begin{align}\label{Del-2}
\Psi(2\lambda) \le \beta_2 \Psi(\lambda), \quad \mbox{ for every } \lambda \ge 0.
\end{align}
\end{itemize}
\end{remark}

Our main regularity result concerning generalized Lorentz spaces is the following.

\begin{theorem}\label{theo:main}
Assume that $\mu \in L^1_{\mathrm{loc}}(\mathbb{R}^+; \mathbb{R}^+)$ and $\Psi$ is given as in~\eqref{def:V} such that
\begin{align}\label{cond:V}
\beta_1 \Psi(\lambda) \le \Psi(2\lambda) \le \beta_2 \Psi(\lambda), \quad \forall \lambda \ge 0,
\end{align}
for some $\beta_2 > \beta_1 > 1$. For every $\mathfrak{s} \in (0,\infty)$ and $0 < \mathfrak{t} \le \infty$, there exists $\delta_0 = \delta_0(\mathtt{data}_2) \in (0,1/81)$ such that if $(\Omega,\mathbf{A}) \in \mathcal{H}(\delta_0,R_0)$ for some $R_0>0$, then the estimate
\begin{align}\label{ineq-main}
\|\mathcal{M}_{\alpha}(|\nabla \mathbf{u}|^p + |\mathrm{\pi}|^{p'})\|_{\mathcal{L}^{\mathfrak{s},\mathfrak{t}}_{\mu,\omega}(\Omega)} \le C \|\mathcal{M}_{\alpha}(\sigma^p + |\mathbf{f}|^p + |\nabla \mathbf{g}|^p)\|_{\mathcal{L}^{\mathfrak{s},\mathfrak{t}}_{\mu,\omega}(\Omega)}
\end{align}
holds true with a constant $C$ depending on $\mathtt{data}_2 := (\mathtt{data}_1,\mathfrak{s},\mathfrak{t},\beta_1,\beta_2)$.
\end{theorem}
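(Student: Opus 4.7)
The plan is to convert the pointwise (in $\lambda$) level-set inequality of Theorem~\ref{theo:dist} into an integrated (functional) inequality on the Lorentz scale, and then to absorb one of the two terms on the right into the left-hand side. Throughout I focus on the case $\mathfrak{t}<\infty$; the case $\mathfrak{t}=\infty$ is analogous, working directly with the supremum. Write
\begin{align*}
J(\mathsf{g}) := \int_0^\infty \lambda^{\mathfrak{t}} \bigl[\Psi(\mathbf{d}^{\omega}_{\mathsf{g}}(\lambda))\bigr]^{\mathfrak{t}/\mathfrak{s}} \,\frac{d\lambda}{\lambda},
\end{align*}
so that $\|\mathsf{g}\|_{\mathcal{L}^{\mathfrak{s},\mathfrak{t}}_{\mu,\omega}(\Omega)}^{\mathfrak{t}} = \mathfrak{s}\,J(\mathsf{g})$, and recall $\mathbb{X}(\lambda)=\mathbf{d}^{\omega}_{\mathcal{M}_{\alpha}\mathbb{U}}(\lambda)$, $\mathbb{Y}(\lambda)=\mathbf{d}^{\omega}_{\mathcal{M}_{\alpha}\mathbb{F}_{\sigma}}(\lambda)$. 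A change of variables $\lambda \mapsto \mathfrak{a}\lambda$ in the definition of $J(\mathcal{M}_\alpha\mathbb{U})$ shows that
\begin{align*}
J(\mathcal{M}_\alpha\mathbb{U}) = \mathfrak{a}^{\mathfrak{t}} \int_0^\infty \lambda^{\mathfrak{t}} \bigl[\Psi(\mathbb{X}(\mathfrak{a}\lambda))\bigr]^{\mathfrak{t}/\mathfrak{s}} \,\frac{d\lambda}{\lambda}.
\end{align*}

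Next, I feed the level-set inequality $\mathbb{X}(\mathfrak{a}\lambda) \le C[\varepsilon\mathbb{X}(\lambda)+\mathbb{Y}(\mathfrak{b}\lambda)]$ from Theorem~\ref{theo:dist} into this integral. Using monotonicity of $\Psi$, the upper doubling condition $\Psi(2\lambda)\le\beta_2\Psi(\lambda)$ iterated a finite number of times to absorb the constant $C$, and the elementary bound $\Psi(a+b)\le \beta_2(\Psi(a)+\Psi(b))$, the integrand is majorised by a multiple of $[\Psi(\varepsilon\mathbb{X}(\lambda))]^{\mathfrak{t}/\mathfrak{s}} + [\Psi(\mathbb{Y}(\mathfrak{b}\lambda))]^{\mathfrak{t}/\mathfrak{s}}$. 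The crucial reduction is to relate $\Psi(\varepsilon\mathbb{X}(\lambda))$ back to $\Psi(\mathbb{X}(\lambda))$ with a \emph{small} constant: iterating the lower doubling bound $\beta_1\Psi(\lambda)\le\Psi(2\lambda)$ with $\beta_1>1$, one obtains, for any $\varepsilon\in(0,1)$,
\begin{align*}
\Psi(\varepsilon\,\mathbb{X}(\lambda)) \le \beta_1^{-\lfloor \log_2(1/\varepsilon)\rfloor}\,\Psi(\mathbb{X}(\lambda)) =: \eta(\varepsilon)\,\Psi(\mathbb{X}(\lambda)),
\end{align*}
with $\eta(\varepsilon)\to 0$ as $\varepsilon\to 0^+$. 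A further change of variable $\lambda\mapsto\lambda/\mathfrak{b}$ in the $\mathbb{Y}$-integral gives $\int_0^\infty \lambda^{\mathfrak{t}}[\Psi(\mathbb{Y}(\mathfrak{b}\lambda))]^{\mathfrak{t}/\mathfrak{s}} \,d\lambda/\lambda = \mathfrak{b}^{-\mathfrak{t}} J(\mathcal{M}_\alpha\mathbb{F}_\sigma)$. Combining these steps yields
\begin{align*}
J(\mathcal{M}_\alpha\mathbb{U}) \le C_1\,\eta(\varepsilon)^{\mathfrak{t}/\mathfrak{s}}\, J(\mathcal{M}_\alpha\mathbb{U}) + C_2(\varepsilon)\,J(\mathcal{M}_\alpha\mathbb{F}_\sigma),
\end{align*}
with $C_1$ depending only on $\mathtt{data}_2$.

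The final step is absorption: I choose $\varepsilon=\varepsilon(\mathtt{data}_2)$ so small that $C_1\eta(\varepsilon)^{\mathfrak{t}/\mathfrak{s}}\le 1/2$, which in turn fixes $\delta_0=\delta_0(\varepsilon,\mathtt{data}_1)=\delta_0(\mathtt{data}_2)$ and $\mathfrak{b}=\mathfrak{b}(\mathtt{data}_2)$ through Theorem~\ref{theo:dist}. Moving the first term to the left-hand side gives the desired estimate~\eqref{ineq-main}. The subtle point here — indeed the step I expect to be the main obstacle — is that this absorption is only legitimate once one knows $J(\mathcal{M}_\alpha\mathbb{U})<\infty$ a priori; this I handle by a standard truncation, replacing $\mathbb{X}$ by $\mathbb{X}_N(\lambda):=\mathbf{d}^{\omega}_{\min(\mathcal{M}_\alpha\mathbb{U},N)}(\lambda)$, running the whole argument with $\mathbb{X}_N$ in place of $\mathbb{X}$ (the level-set inequality transfers since $\mathbb{X}_N\le\mathbb{X}$ for $\lambda<N$ and is zero for $\lambda\ge N$), deducing a bound uniform in $N$, and then letting $N\to\infty$ via monotone convergence. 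The $\mathfrak{t}=\infty$ variant is obtained by the same dichotomy, replacing the integral $J$ by $\sup_{\lambda>0}\lambda[\Psi(\mathbb{X}(\lambda))]^{1/\mathfrak{s}}$ and taking suprema at each stage of the argument.
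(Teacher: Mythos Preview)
Your proof is correct and follows essentially the same route as the paper: change of variables $\lambda\mapsto\mathfrak{a}\lambda$, upper doubling of $\Psi$ to split the sum and absorb the constant $C^*$, lower doubling iterated to turn $\Psi(\varepsilon\mathbb{X})$ into a small multiple of $\Psi(\mathbb{X})$, then absorption after choosing $\varepsilon$ small. Your truncation step to justify the a~priori finiteness of $J(\mathcal{M}_\alpha\mathbb{U})$ is in fact more careful than the paper, which simply absorbs without comment.
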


\begin{definition}[$\psi$-generalized Morrey spaces]
Let $\psi: \Omega \times \mathbb{R}^+ \to \mathbb{R}^+$ be a measurable function. Then, a measurable map $\mathsf{g}: \Omega \to \mathbb{R}$ is said to belong to the $\psi$-generalized Morrey space $\mathrm{M}^{\mathfrak{s},\psi}(\Omega)$ with $\mathfrak{s} \in (0, \infty)$ if
\begin{align}\label{Morrey-norm}
\|\mathsf{g}\|_{\mathrm{M}^{\mathfrak{s},\psi}(\Omega)} := \sup_{y\in \Omega; \, 0<\varrho<\mathrm{diam}(\Omega)} \left(\frac{1}{\psi(x,\varrho)}\int_{\Omega_{\varrho}(y)}|\mathsf{g}(z)|^{\mathfrak{s}}dz\right)^{\frac{1}{\mathfrak{s}}}< \infty.
\end{align}
\end{definition}

As a consequence of the previous result, the second regularity result reveals the gradient bound of weak solution pair in $\psi$-generalized Morrey spaces. Let us report in Theorem~\ref{theo:improv} below.
 
\begin{theorem}\label{theo:improv}
Let $\psi: \Omega \times \mathbb{R}^+ \to \mathbb{R}^+$ be a measurable function such that there is $\beta_0 \in (1,2^d)$ satisfying 
\begin{align}\label{cond-psi-2}
\psi(x,2\varrho) \le \beta_0 \psi(x,\varrho), \quad \forall x \in \Omega \mbox{ and } 0 < \varrho <\mathrm{diam}(\Omega).
\end{align}
Then, for every $\mathfrak{s} \in (0,\infty)$, there exists $\delta_0 = \delta_0(\mathtt{data}_3) \in (0,1/81)$ such that if $(\Omega,\mathbf{A}) \in \mathcal{H}(\delta_0,R_0)$ for some $R_0>0$, it holds 
\begin{align}\label{GF-norm}
\|\mathcal{M}_{\alpha}(|\nabla \mathbf{u}|^p + |\mathrm{\pi}|^{p'})\|_{\mathrm{M}^{\mathfrak{s},\psi}(\Omega)} \le C \|\mathcal{M}_{\alpha}(\sigma^p + |\mathbf{f}|^p + |\nabla \mathbf{g}|^p)\|_{\mathrm{M}^{\mathfrak{s},\psi}(\Omega)},
\end{align}
where the constant $C$ depends on $\mathtt{data}_3 := (\mathtt{data}_0,\mathfrak{s},\beta_0)$.
\end{theorem}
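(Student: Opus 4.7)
The strategy is to follow the pattern of the proof of Theorem \ref{theo:main} but adapt the distributional machinery to the supremum-over-balls nature of the Morrey quasi-norm. The fundamental input is again the level-set inequality of Theorem \ref{theo:dist}, specialized to the Lebesgue weight $\omega\equiv 1\in\mathcal{A}_\infty$, in which case $\mathtt{data}_1$ reduces to $\mathtt{data}_0$ up to universal constants.

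The first step is a localized version of Theorem \ref{theo:dist}. Inspecting the Vitali / Calder\'on--Zygmund stopping-time covering that drives its proof, one extracts the following counterpart: there exist $\mathfrak{a}=\mathfrak{a}(\mathtt{data}_0)$ and $\kappa\ge 1$ (an absolute dilation factor from the covering) such that, for every $\varepsilon\in(0,1)$, one can choose $\mathfrak{b}=\mathfrak{b}(\varepsilon,\mathtt{data}_0)$ and $\delta_0=\delta_0(\varepsilon,\mathtt{data}_0)$ with
\begin{align*}
\left|\{\mathcal{M}_\alpha\mathbb{U}>\mathfrak{a}\lambda\}\cap\Omega_\varrho(y)\right|
\le C\varepsilon\left|\{\mathcal{M}_\alpha\mathbb{U}>\lambda\}\cap\Omega_{\kappa\varrho}(y)\right|
+ C\left|\{\mathcal{M}_\alpha\mathbb{F}_\sigma>\mathfrak{b}\lambda\}\cap\Omega_{\kappa\varrho}(y)\right|,
\end{align*}
for all $y\in\Omega$, $\varrho\in(0,\mathrm{diam}(\Omega))$ and $\lambda>0$, provided $(\Omega,\mathbf{A})\in\mathcal{H}(\delta_0,R_0)$.

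Next, multiply this inequality by $\mathfrak{s}\lambda^{\mathfrak{s}-1}$, integrate over $\lambda\in(0,\infty)$, and rescale via $\mathfrak{a}\lambda\mapsto\mu$ and $\mathfrak{b}\lambda\mapsto\mu$. The layer-cake formula then converts the distributional bound into
\begin{align*}
\int_{\Omega_\varrho(y)}(\mathcal{M}_\alpha\mathbb{U})^{\mathfrak{s}}\,dx
\le C\mathfrak{a}^{\mathfrak{s}}\Big[\varepsilon\int_{\Omega_{\kappa\varrho}(y)}(\mathcal{M}_\alpha\mathbb{U})^{\mathfrak{s}}\,dx
+\int_{\Omega_{\kappa\varrho}(y)}(\mathcal{M}_\alpha\mathbb{F}_\sigma)^{\mathfrak{s}}\,dx\Big].
\end{align*}
Divide by $\psi(y,\varrho)$ and iterate \eqref{cond-psi-2} to get $\psi(y,\kappa\varrho)\le\beta_0^{\lceil\log_2\kappa\rceil}\psi(y,\varrho)$. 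Taking the supremum over $(y,\varrho)$ on both sides yields
\begin{align*}
\|\mathcal{M}_\alpha\mathbb{U}\|_{\mathrm{M}^{\mathfrak{s},\psi}(\Omega)}^{\mathfrak{s}}
\le C\mathfrak{a}^{\mathfrak{s}}\beta_0^{\lceil\log_2\kappa\rceil}\Big[\varepsilon\|\mathcal{M}_\alpha\mathbb{U}\|_{\mathrm{M}^{\mathfrak{s},\psi}(\Omega)}^{\mathfrak{s}}
+\|\mathcal{M}_\alpha\mathbb{F}_\sigma\|_{\mathrm{M}^{\mathfrak{s},\psi}(\Omega)}^{\mathfrak{s}}\Big].
\end{align*}
Choosing $\varepsilon$ small enough that $C\mathfrak{a}^{\mathfrak{s}}\beta_0^{\lceil\log_2\kappa\rceil}\varepsilon\le 1/2$ (this fixes $\delta_0=\delta_0(\mathtt{data}_3)$), a standard truncation of $\mathcal{M}_\alpha\mathbb{U}$ to guarantee a priori finiteness of the left-hand side allows absorption and delivers \eqref{GF-norm}.

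The main technical hurdle is the first step: the global covering underlying Theorem \ref{theo:dist} selects Vitali sub-balls inside $\Omega$ without regard for whether they lie inside $B_\varrho(y)$, so a naive intersection may invalidate the boundary comparison estimates that are at the heart of Theorem \ref{theo:dist}. The remedy is to run the Calder\'on--Zygmund decomposition on $B_{\kappa\varrho}(y)$ from the outset, verifying that all selected sub-balls remain admissible for those local comparison arguments. The enlargement factor $\kappa$ is harmless thanks to the doubling property \eqref{cond-psi-2}.
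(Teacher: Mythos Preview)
Your localized level-set inequality is where the argument breaks. You assert that $\kappa$ is an ``absolute dilation factor from the covering,'' but in fact any such $\kappa$ must depend on $\varepsilon$. In the covering Lemma~\ref{lem:VCL}, condition~(ii) is tested on balls $B_r(x_0)$ with $r$ ranging up to $R_0$; once you restrict $\mathcal{S}_1$ to $\Omega_\varrho(y)$, the only balls that matter are those with $|B_r(x_0)\cap B_\varrho(y)|\ge \varepsilon|B_r(x_0)|$, which forces merely $r\lesssim \varepsilon^{-1/d}\varrho$. Hence $\kappa\sim\varepsilon^{-1/d}$, and your absorption coefficient becomes
\[
C\,\mathfrak{a}^{\mathfrak{s}}\,\beta_0^{\lceil\log_2\kappa\rceil}\varepsilon \;\sim\; C\,\varepsilon^{\,1-\log_2(\beta_0)/d},
\]
which does still tend to $0$ as $\varepsilon\to 0$ precisely because $\beta_0<2^d$. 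So your route is salvageable, but as written your ``$\kappa$ absolute, then choose $\varepsilon$ small'' step is false, and you never explain where the hypothesis $\beta_0\in(1,2^d)$ is actually used.

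The paper avoids this localization issue altogether by a different mechanism. It first derives from Theorem~\ref{theo:dist} the weighted Lebesgue bound
\[
\int_{\mathbb{R}^d}\bigl[\chi_\Omega\mathcal{M}_\alpha\mathbb{U}\bigr]^{\mathfrak{s}}\omega\,dx
\le C\int_{\mathbb{R}^d}\bigl[\chi_\Omega\mathcal{M}_\alpha\mathbb{F}_\sigma\bigr]^{\mathfrak{s}}\omega\,dx
\]
valid for every $\omega\in\mathcal{A}_\infty$ with uniform constant. To localize to $\Omega_\varrho(y)$, it replaces $\chi_{B_\varrho(y)}$ by the Coifman--Rochberg weight $\omega=(\mathcal{M}\chi_{B_\varrho(y)})^\vartheta\in\mathcal{A}_1$ (whose $\mathcal{A}_\infty$ constants are independent of $y,\varrho$), applies the weighted bound, and then controls the right-hand side via a dyadic annular decomposition $B_{2^{n+1}\varrho}(y)\setminus B_{2^n\varrho}(y)$ together with the pointwise estimate $\mathcal{M}\chi_{B_\varrho(y)}\le 2^d\cdot 2^{-nd}$ there. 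The resulting series $\sum_n(\beta_0/2^{d\vartheta})^n$ converges once $\vartheta>\log_2(\beta_0)/d$, which is exactly where $\beta_0<2^d$ enters. This weighted-extrapolation trick is cleaner than re-running the covering locally and explains transparently the role of the upper bound on $\beta_0$.
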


\subsection{A correlation to asymptotically regular problems}

The next result is devoted to study in details the regularity estimates for asymptotically regular Stokes system~\ref{eq:asym}. Let us present in the following the definition of so-called \emph{asymptotically $\delta_0>0$-regular problem}. For additional reading, one can refer to~\cite[Theorem 2.14]{SS2010_I} and~\cite[Definition 2.2]{BOW2015} for some of precise statements.

\begin{definition}[Asymptotically $\delta_0$-regular problem]
\label{def:asymp}
Let $\mathbf{A}(x,\nu)$ be a regular function that satisfying~\eqref{cond:A1} and~\eqref{cond:A2}. Then, we say that $\mathbf{a}(x,\nu)$ is asymptotically $\delta_0$-regular with $\mathbf{A}(x,\nu)$ if there exists a uniformly bounded nonnegative function $\epsilon: [0,\infty) \to [0,\infty)$ such that
\begin{align*}
\displaystyle{\limsup_{\lambda \to\infty}{\epsilon(\lambda)}} \le \delta_0,
\end{align*}
and
\begin{align*}
|\mathbf{a}(x,\nu) - \mathbf{A}(x,\nu)| \le \epsilon(|\nu|)(1+|\nu|^{p-1}),
\end{align*}
for almost every $x \in \mathbb{R}^d$ and all $\nu \in \mathbb{R}^{d^2}$. 
\end{definition}
\begin{remark}
\label{rem:asymp}
For any sufficiently small $\delta_0>0$, if $\mathbf{a}(x,\nu)$ is asymptotically $\delta_0$-regular with $\mathbf{A}(x,\nu)$, then one has that
\begin{align}
\label{cond:asym}
\limsup_{|\nu| \to \infty}{\frac{|\mathbf{a}(x,\nu)-\mathbf{A}(x,\nu)|}{(\sigma^2+|\nu|^2)^{\frac{p-1}{2}}}} \le \delta_0
\end{align}
uniformly with respect to $x \in \mathbb{R}^d$. It remarks that, as mentioned in~\cite{BOW2015}, the asymptotically $\delta_0$-regular condition given in~\eqref{cond:asym} is weaker than the one described in~\cite{SS2010_I}. Throughout this paper we will assume that $\mathbf{a}(x,\nu)$ is asymptotically $\delta_0$-regular with operator $\mathbf{A}(x,\nu)$ in problem~\eqref{eq:Stokes} in the sense of~\eqref{cond:asym}, where $\delta_0$ will be selected later on.
\end{remark}

Directly follow from previous results, it allows us to state the global gradient boundedness for solution pair to asymptotically regular systems of Stokes type in~\eqref{eq:asym} that settled in corresponding generalized function spaces. The next main result of the paper reads as follows.

\begin{theorem}\label{theo:asym}
Let $0<\mathfrak{s}<\infty$, $0<\mathfrak{t}\le \infty$ and $\mu,\Psi,\psi$ be given as in~\eqref{cond:V},~\eqref{cond-psi-2}. Suppose that $\mathbf{a}: \Omega \times \mathbb{R}^{d^2} \to \mathbb{R}^{d^2}$ is a Carath\'edory matrix-valued function. Assume moreover that $(\mathbf{u}_{\mathbf{a}},\mathrm{\pi}_{\mathbf{a}}) \in W^{1,p}_{\mathbf{g},\mathrm{div}}(\Omega;\mathbb{R}^d) \times L^{p'}_{\mathrm{int}}(\Omega)$ is a pair of solutions to system~\eqref{eq:asym}. There exists $\delta_0 \in (0,1/81)$ such that if $(\Omega,\mathbf{A}) \in \mathcal{H}(\delta_0,R_0)$ for some $R_0>0$ and~\eqref{cond:asym} is valid, then the following estimates
\begin{align}\label{main-asym}
\|\mathcal{M}_{\alpha}(|\nabla \mathbf{u}_{\mathbf{a}}|^p + |\mathrm{\pi}_{\mathbf{a}}|^{p'})\|_{\mathbb{L}(\Omega)} \le C \|\mathcal{M}_{\alpha}(1 + |\mathbf{f}|^p + |\nabla \mathbf{g}|^p)\|_{\mathbb{L}(\Omega)},
\end{align}
holds. Here, for the sake of shortness, we write $\mathbb{L}(\Omega) = \mathcal{L}^{\mathfrak{s},\mathfrak{t}}_{\mu,\omega}(\Omega)$ or $\mathbb{L}(\Omega) = \mathrm{M}^{\mathfrak{s},\psi}(\Omega)$ corresponding to the generalized weighted Lorentz or $\psi$-generalized Morrey spaces.
\end{theorem}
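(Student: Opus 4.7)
The plan is to regard $(\mathbf{u}_{\mathbf{a}}, \mathrm{\pi}_{\mathbf{a}})$ as a weak solution of the regular Stokes system~\eqref{eq:Stokes} with an enlarged source, so that Theorem~\ref{theo:main} and Theorem~\ref{theo:improv} may be invoked directly. Setting $\mathbf{h}(x,\nu) := \mathbf{a}(x,\nu) - \mathbf{A}(x,\nu)$, the weak formulation for~\eqref{eq:asym} becomes
\begin{align*}
\int_\Omega \langle \mathbf{A}(x,\nabla \mathbf{u}_{\mathbf{a}}), \nabla\varphi\rangle\, dx - \int_\Omega \mathrm{\pi}_{\mathbf{a}}\, \mathrm{div}(\varphi)\, dx = \int_\Omega \langle \mathbf{B}(x,\mathbf{f}) - \mathbf{h}(x,\nabla \mathbf{u}_{\mathbf{a}}),\nabla\varphi\rangle\, dx,
\end{align*}
so $(\mathbf{u}_{\mathbf{a}}, \mathrm{\pi}_{\mathbf{a}})$ is a weak solution of~\eqref{eq:Stokes} with modified source $\widetilde{\mathbf{F}} := \mathbf{B}(x,\mathbf{f}) - \mathbf{h}(x,\nabla \mathbf{u}_{\mathbf{a}})$. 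The first observation I would verify is that the proofs leading to Theorems~\ref{theo:main} and~\ref{theo:improv} use the source only through the pointwise growth~\eqref{cond:Bf}, so both estimates remain valid whenever the right-hand side admits a pointwise majorant of the same form.

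I would then quantitatively exploit the asymptotic structure. Given any $\eta\in(0,1)$, Definition~\ref{def:asymp} provides $M=M(\eta)$ such that $\epsilon(\tau)\le 2\delta_0$ for every $\tau\ge M$, while $\epsilon$ is bounded by $\|\epsilon\|_{L^\infty}$ on $[0,M]$. Splitting according to $|\nabla \mathbf{u}_{\mathbf{a}}|\gtrless M$ and using $(1+t^{p-1})^{p'}\le C(1+t^p)$ produces
\begin{align*}
|\widetilde{\mathbf{F}}(x)|^{p'} \le C\bigl(\sigma^p + |\mathbf{f}|^p\bigr) + C\delta_0^{p'}\, |\nabla \mathbf{u}_{\mathbf{a}}|^p + K,
\end{align*}
with $K=K(\delta_0,p,\Upsilon,M,\|\epsilon\|_{L^\infty})$. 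Feeding this bound into the source-general form of Theorem~\ref{theo:main} (resp.~Theorem~\ref{theo:improv}), and then using the subadditivity of $\mathcal{M}_\alpha$ together with the quasi-triangle property of $\|\cdot\|_{\mathbb{L}(\Omega)}$, yields
\begin{align*}
\|\mathcal{M}_{\alpha}(|\nabla \mathbf{u}_{\mathbf{a}}|^p + |\mathrm{\pi}_{\mathbf{a}}|^{p'})\|_{\mathbb{L}(\Omega)} \le C\|\mathcal{M}_{\alpha}(1+|\mathbf{f}|^p + |\nabla \mathbf{g}|^p)\|_{\mathbb{L}(\Omega)} + C\delta_0^{p'}\,\|\mathcal{M}_{\alpha}(|\nabla \mathbf{u}_{\mathbf{a}}|^p)\|_{\mathbb{L}(\Omega)}.
\end{align*}
Selecting $\delta_0$ so small that $C\delta_0^{p'}\le \tfrac12$ and absorbing the last term into the left-hand side delivers~\eqref{main-asym}.

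The principal obstacle is the legitimacy of this absorption, since it silently presupposes $\|\mathcal{M}_{\alpha}(|\nabla \mathbf{u}_{\mathbf{a}}|^p)\|_{\mathbb{L}(\Omega)}<\infty$, which is not a priori clear. To bypass this circularity I would carry out the absorption at the level of the truncated distribution-function inequality~\eqref{ineq-dist} of Theorem~\ref{theo:dist}: for each finite $\Lambda$, replace $\mathbb{X}(\lambda)$ with $\mathbb{X}_\Lambda(\lambda):=\mathbb{X}(\min\{\lambda,\Lambda\})$, which has finite norm in $\mathbb{L}(\Omega)$, perform the same splitting and absorption to obtain a bound independent of $\Lambda$, and then conclude by monotone convergence. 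A secondary point to handle is that assumptions~\ref{ass:Reif}--\ref{ass:bmo} are imposed on $\mathbf{A}$ and not on $\mathbf{a}$; this is consistent with the above reformulation, because the Reifenberg flatness and BMO hypotheses enter only through the comparison/covering scheme applied to the regular operator $\mathbf{A}$, whereas $\mathbf{a}$ is controlled exclusively through the pointwise smallness of $\mathbf{h}$.
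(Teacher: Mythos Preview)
Your approach is reasonable but takes a genuinely different route from the paper's. You put the entire perturbation $\mathbf{h}=\mathbf{a}-\mathbf{A}$ into the source and then absorb the $\delta_0$-small piece at the level of the final norm estimate. The paper instead absorbs the small piece into the \emph{operator}: it defines $\Theta_\sigma(x,\nu)=(\mathbf{a}-\mathbf{A})(x,\nu)/(\sigma^2+|\nu|^2)^{(p-1)/2}$, replaces $\Theta_\sigma(x,\cdot)$ inside the ball $B_{M_0}$ by its Poisson extension $\Pi_\sigma(x,\cdot)$ so that $|\Pi_\sigma|\le\delta_0$ holds \emph{everywhere}, and sets $\mathcal{E}(x,y):=\mathbf{A}(x,y)+(\sigma^2+|y|^2)^{(p-1)/2}\Pi_\sigma(x,\nabla\mathbf{u}_\mathbf{a})$. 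For $\delta_0$ small this $\mathcal{E}$ still satisfies~\eqref{cond:A1}--\eqref{cond:A2} with a new constant $\tilde\Upsilon$, and $(\mathbf{u}_\mathbf{a},\pi_\mathbf{a})$ solves the $\mathcal{E}$-system with a new right-hand side $\mathbf{b}(x,\mathbf{f})$ that is \emph{uniformly bounded} (it only contains the truncated part $\chi_{\{|\nabla\mathbf{u}_\mathbf{a}|<M_0\}}(\Theta_\sigma-\Pi_\sigma)$). Theorems~\ref{theo:main} and~\ref{theo:improv} then apply directly, with no absorption step at all.

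The payoff of the paper's trick is exactly that it sidesteps the finiteness issue you flagged: since nothing involving $|\nabla\mathbf{u}_\mathbf{a}|^p$ survives on the right-hand side, there is no circularity to break. Your route can be made to work, but the truncation argument you sketch needs more care than indicated: truncating $\mathbb{X}$ at level $\Lambda$ does not directly interact with the level-set inequality~\eqref{ineq-dist} (which compares $\mathbb{X}(\mathfrak{a}\lambda)$ to $\mathbb{X}(\lambda)$, not to a truncated version), and you would also have to track that the constant $C$ in the norm estimate does not blow up as you shrink $\delta_0$---this is true here because Theorems~\ref{theo:main}/\ref{theo:improv} fix $C$ from $\mathtt{data}_2$ first and only then choose $\delta_0$, but it deserves a remark. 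Finally, your source $\widetilde{\mathbf{F}}$ is not of the form $\mathbf{B}(x,\mathbf{f})$, so you are implicitly invoking a more general version of the theorems; this is harmless (the proofs indeed use only the pointwise bound), but the paper's route avoids even this minor detour.
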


The rest part of this paper is arranged as follows. The core of Section~\ref{sec:Exist} is to prove the global a priori estimate of weak solution to our problem. Section~\ref{sec:comp} is intended to establish the comparison procedures, which play a technical keypoint in our argument. The comparison scheme is divided into two parts: in the interior and up-to-the-boundary of domain $\Omega$. Next, the main proofs of this paper is split into two parts: first, to deal with regularity estimates for generalized Stokes system, we send the reader to Section~\ref{sec:proof} that proving main theorems stated above: Theorem~\ref{theo:main} and Theorem~\ref{theo:improv}. Finally, being connected with the results of~\eqref{eq:Stokes}, it enables us to  prove desired regularity for asymptotically regular system~\eqref{eq:asym}, already stated in Theorem~\ref{theo:asym}.

\section{Global a priori estimate for weak solution}
\label{sec:Exist}

This section aims to state and prove a theorem that enables us to establish a priori estimate for weak solution pair $(\mathbf{u},\pi)$ to steady problem~\eqref{eq:Stokes}. Let us first reproduce here the solvability result of divergence equations presented by Acosta \textit{et al.} in~\cite{ADM06}. Regarding that study, via a constructive approach, existence of solutions to the divergence equation on a bounded John domain was proved. 

\begin{lemma}\label{lem:div}
Let $\Omega$ be an open bounded John domain in $\mathbb{R}^d$ and $h \in L^q_{\mathrm{int}}(\Omega)$ for some $q>1$. Then there exists $\varphi \in W_0^{1,q}(\Omega;\mathbb{R}^d)$ and $C = C(\Omega, d, q)>0$ such that
\begin{align}\notag %\label{est:eq-div}
\mathrm{div}(\varphi) = h \ \mbox{ in } \Omega, \mbox{ and } \|\nabla \varphi\|_{L^q(\Omega;\mathrm{R}^d)} \le C \|h\|_{L^q(\Omega)}.
\end{align}
\end{lemma}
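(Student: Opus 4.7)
The plan is to follow the Bogovskii-style construction that was extended to John domains by Acosta, Dur\'an and Muschietti in~\cite{ADM06}; since the paper reproduces the statement, my proposal mirrors the strategy of that reference. First I would perform a Whitney decomposition of $\Omega$ into a countable family of essentially disjoint dyadic cubes $\{Q_j\}$ with diameter comparable to $\mathrm{dist}(Q_j,\partial\Omega)$. Using the John condition, I would fix a distinguished ``central'' cube $Q_0$ and organize the remaining cubes into a rooted tree: each $Q_j$ is connected to $Q_0$ by a chain of Whitney cubes with bounded overlap whose length is controlled in terms of the John parameter. This geometric skeleton will carry all subsequent analytic estimates.

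Next, given $h \in L^q_{\mathrm{int}}(\Omega)$, I would decompose $h = \sum_j h_j$ with $\mathrm{supp}(h_j) \subset Q_j$ in such a way that each $h_j$ has mean zero on $Q_j$. This is obtained by propagating the ``excess mass'' of $h$ from the leaves of the tree toward the root through flux-type corrections along the chains; the construction terminates consistently thanks to the global mean-zero hypothesis. On each individual Whitney cube $Q_j$, which is star-shaped with respect to a large interior ball, I would then apply the classical Bogovskii integral operator to produce $\varphi_j \in W^{1,q}_0(Q_j;\mathbb{R}^d)$ solving $\mathrm{div}(\varphi_j) = h_j$ together with $\|\nabla \varphi_j\|_{L^q(Q_j;\mathbb{R}^d)} \le C\|h_j\|_{L^q(Q_j)}$, where the constant is uniform in $j$ by scaling invariance. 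Setting $\varphi := \sum_j \varphi_j$ (extended by zero outside each cube) then yields a candidate in $W^{1,q}_0(\Omega;\mathbb{R}^d)$ with $\mathrm{div}(\varphi) = h$ in $\Omega$.

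The main obstacle, and the heart of the argument, is to establish $\|\nabla \varphi\|_{L^q(\Omega;\mathbb{R}^d)} \le C(\Omega,d,q)\|h\|_{L^q(\Omega)}$ uniformly, since the naive sum $\sum_j \|\nabla \varphi_j\|_{L^q}^q$ is only controlled by $\sum_j \|h_j\|_{L^q}^q$, and the flux redistribution can inflate each $\|h_j\|_{L^q}$ relative to $\|h\|_{L^q}$. I would reduce this to bounding the flux-redistribution operator on $L^q$, viewing it as a discrete Calder\'on-Zygmund-type operator acting along the John tree, and handle it by combining the finite-overlap of the chains with a Hardy-type inequality on the tree. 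The John condition is decisive here: it prevents the chain lengths from blowing up relative to the sizes of the cubes, and converts this geometric control into the required analytic $L^q$-bound. Once this transport estimate is in place, summing the local Bogovskii bounds yields the asserted global inequality and completes the proof.
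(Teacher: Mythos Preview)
The paper does not actually prove this lemma: it is stated as a known result and attributed directly to Acosta, Dur\'an and Muschietti~\cite{ADM06}, with no argument reproduced beyond the remark that the construction there is ``constructive''. Your proposal is a faithful outline of precisely that reference's proof (Whitney decomposition, tree structure from the John condition, mean-zero redistribution along chains, local Bogovskii operators on cubes, and the Hardy-type/tree estimate to sum), so it is correct and consistent with what the paper invokes, but there is nothing in the paper's own text to compare against.
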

\begin{remark}
It should also be noted that in the existence proof pointed out in Lemma~\ref{lem:div}, domain to be considered is the John domain. However, as shown in~\cite{LM2014, DK2018_2}, any domain that is flat in the sense of Reifenberg is also a John domain. We will make use of this fact and in what follows, the only assumption $(\Omega,\mathbf{A}) \in \mathcal{H}(\delta_0,R_0)$ is imposed to our problems.
\end{remark}
\begin{lemma}\label{lem:global}
Let $\mathbf{f} \in L^p(\Omega;\mathbb{R}^{d^2})$ and $\mathbf{g} \in W^{1,p}(\Omega;\mathbb{R}^{d})$. There exists a unique weak solution pair $(\mathbf{u},\mathrm{\pi}) \in W^{1,p}_{\mathbf{g},\mathrm{div}}(\Omega;\mathbb{R}^d) \times L^{p'}_{\mathrm{int}}(\Omega)$ to system~\eqref{eq:Stokes} such that
\begin{align}\label{est:global-1}
\|\nabla \mathbf{u}\|_{L^p(\Omega;\mathbb{R}^{d^2})} + \|\mathrm{\pi}\|_{L^{p'}(\Omega)} \le C \left(\sigma + \|\mathbf{f}\|_{L^p(\Omega;\mathbb{R}^{d^2})} + \|\nabla \mathbf{g}\|_{L^p(\Omega;\mathbb{R}^{d^2})}\right),
\end{align}
where $C = C(\Omega,d,\Upsilon,p)>0$. In particular, there holds
\begin{align}\label{est:global-2}
\|\mathrm{\pi}\|_{L^{p'}(\Omega)} \le C \left(\sigma + \|\nabla \mathbf{u}\|_{L^{p}(\Omega;\mathbb{R}^{d^2})} + \|\mathbf{f}\|_{L^{p}(\Omega;\mathbb{R}^{d^2})}\right).
\end{align}
\end{lemma}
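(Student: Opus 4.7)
The plan is to split the proof into three pieces: first, establish existence and uniqueness of the velocity field $\mathbf{u}$ in $W^{1,p}_{\mathbf{g},\mathrm{div}}(\Omega;\mathbb{R}^d)$ via the variational formulation \eqref{var-form1} restricted to divergence-free test functions; second, derive the a priori bound for $\nabla\mathbf{u}$ by the standard monotone-operator energy test; and third, recover the pressure $\pi$ together with its estimate by invoking Lemma~\ref{lem:div} in a de Rham-type duality argument. The Reifenberg-flat remark following Lemma~\ref{lem:div} guarantees that $\Omega$ is a John domain, so that Lemma~\ref{lem:div} applies here.

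For the first two pieces, I would work entirely on divergence-free test functions, so the pressure is invisible. Existence and uniqueness of $\mathbf{u}$ follow from the classical theory of monotone quasilinear operators: under \eqref{cond:A1}--\eqref{cond:A2} the map $\varphi \mapsto \int_\Omega \langle\mathbf{A}(x,\nabla\varphi + \nabla\mathbf{g}), \nabla(\cdot)\rangle\,dx$ defines a strictly monotone, coercive, demicontinuous operator on $W^{1,p}_{0,\mathrm{div}}(\Omega;\mathbb{R}^d)$, so Browder--Minty yields a unique solution. For the a priori bound, the key step is to test \eqref{var-form1} with $\varphi = \mathbf{u} - \mathbf{g} \in W^{1,p}_{0,\mathrm{div}}$. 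Applying \eqref{cond:A2} with $\nu_1 = \nabla\mathbf{u}$, $\nu_2 = 0$ and using $|\mathbf{A}(x,0)| \le \Upsilon \sigma^{p-1}$ from \eqref{cond:A1} gives
\begin{align*}
\Upsilon^{-1}\int_\Omega (\sigma^2+|\nabla\mathbf{u}|^2)^{\frac{p-2}{2}}|\nabla\mathbf{u}|^2\,dx \le \int_\Omega \langle\mathbf{A}(x,\nabla\mathbf{u}),\nabla\mathbf{u}\rangle\,dx + C\sigma^{p-1}\|\nabla\mathbf{u}\|_{L^1},
\end{align*}
which, combined with \eqref{cond:A1}--\eqref{cond:Bf} applied to the terms $\int\langle\mathbf{A}(x,\nabla\mathbf{u}),\nabla\mathbf{g}\rangle$ and $\int\langle\mathbf{B}(x,\mathbf{f}),\nabla\mathbf{u}-\nabla\mathbf{g}\rangle$, and then H\"older's and Young's inequalities (splitting the $p\ge 2$ and $1<p<2$ regimes to convert the weighted integral into $\|\nabla\mathbf{u}\|_{L^p}^p$ modulo an additive $\sigma^p|\Omega|$ term), delivers \eqref{est:global-1} for the velocity part after absorbing the $\|\nabla\mathbf{u}\|_{L^p}^p$ terms from the right into the left.

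For the pressure, I would use the dual characterization $\|\pi\|_{L^{p'}(\Omega)} = \sup\{\int_\Omega \pi h\,dx : h \in L^p_{\mathrm{int}}(\Omega),\ \|h\|_{L^p}\le 1\}$. Given such a test $h$, Lemma~\ref{lem:div} produces $\varphi \in W^{1,p}_0(\Omega;\mathbb{R}^d)$ with $\mathrm{div}(\varphi) = h$ and $\|\nabla\varphi\|_{L^p} \le C\|h\|_{L^p}$. Plugging this $\varphi$ into \eqref{var-form2} yields
\begin{align*}
\int_\Omega \pi h\,dx = \int_\Omega \langle\mathbf{A}(x,\nabla\mathbf{u}),\nabla\varphi\rangle\,dx - \int_\Omega \langle\mathbf{B}(x,\mathbf{f}),\nabla\varphi\rangle\,dx,
\end{align*}
and the growth conditions \eqref{cond:A1}, \eqref{cond:Bf} together with H\"older bound the right-hand side by $C\bigl(\|(\sigma^2+|\nabla\mathbf{u}|^2)^{(p-1)/2}\|_{L^{p'}} + \|(\sigma^2+|\mathbf{f}|^2)^{(p-1)/2}\|_{L^{p'}}\bigr)\|\nabla\varphi\|_{L^p}$, which directly gives \eqref{est:global-2}; then \eqref{est:global-1} follows by combining with the velocity bound. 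Existence of $\pi \in L^{p'}_{\mathrm{int}}(\Omega)$ itself is obtained from the same duality argument: the linear functional on $L^p_{\mathrm{int}}(\Omega)$ defined by the right-hand side above extends (via $\mathbf{A}$ and $\mathbf{B}$ and the divergence right-inverse) to a bounded functional, hence is represented by a unique $\pi \in L^{p'}_{\mathrm{int}}(\Omega)$ that satisfies \eqref{var-form2}.

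The main obstacle will be the standard $p$-Laplacian dichotomy in the coercivity step: when $1<p<2$, the quantity $(\sigma^2+|\nabla\mathbf{u}|^2)^{(p-2)/2}|\nabla\mathbf{u}|^2$ degenerates and one must use $|\nabla\mathbf{u}|^p \le C[(\sigma^2+|\nabla\mathbf{u}|^2)^{(p-2)/2}|\nabla\mathbf{u}|^2 + \sigma^p]$ to pass to a genuine $L^p$-bound, which inflates the additive $\sigma$ contribution and has to be tracked carefully through Young's inequality. A secondary point is that Lemma~\ref{lem:div} is stated for John domains, so one must explicitly cite the observation (made in the remark after Lemma~\ref{lem:div}) that Reifenberg-flat domains are John, to legitimately apply the divergence solvability on $\Omega$.
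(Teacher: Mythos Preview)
Your proposal is correct and follows essentially the same route as the paper: test \eqref{var-form1} with $\varphi=\mathbf{u}-\mathbf{g}$ and use \eqref{cond:A1}--\eqref{cond:Bf} plus Young's inequality for the velocity bound, then recover the pressure via the divergence right-inverse of Lemma~\ref{lem:div}. The only cosmetic differences are that the paper invokes \cite[Theorem~III.5.3]{G11} directly for the existence of $\pi$ (rather than your Riesz-representation phrasing) and, for the estimate \eqref{est:global-2}, plugs in the explicit extremizer $h=|\pi|^{p'-2}\pi-\fint_\Omega|\pi|^{p'-2}\pi\,dx$ instead of taking the supremum over $h\in L^p_{\mathrm{int}}$; these are equivalent formulations of the same duality.
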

\begin{proof}
The existence and uniqueness of weak solutions have been proved by appealing the theory of monotone operators combined with compactness arguments (cf.~\cite{Ladyzenskaya,Ladyzenskaya2,Ladyzenskaya1,Lions1969}). Let us start putting into~\eqref{var-form1} the test function $\varphi = \mathbf{u}-\mathbf{g}$, we infer that
\begin{align}\notag
\int_{\Omega} \langle \mathbf{A}(x, \nabla \mathbf{u}), \nabla \mathbf{u} \rangle dx =  \int_{\Omega} \langle \mathbf{A}(x, \nabla \mathbf{u}), \nabla  \mathbf{g} \rangle dx + \int_{\Omega} \langle \mathbf{B}(x,\mathbf{f}), \nabla \mathbf{u} \rangle dx - \int_{\Omega} \langle \mathbf{B}(x,\mathbf{f}), \nabla \mathbf{g} \rangle dx.
\end{align}
With the use of~\eqref{cond:A1},~\eqref{cond:A2},~\eqref{cond:Bf}, it allows us to get
\begin{align}\label{est:global-3}
\Upsilon^{-1}\int_{\Omega} (\sigma^2+|\nabla \mathbf{u}|^2)^{\frac{p-2}{2}}|\nabla \mathbf{u}|^2 dx & \le \Upsilon \int_{\Omega} \sigma^{p-1}|\nabla \mathbf{u}| dx + \Upsilon \int_{\Omega} (\sigma^2+|\nabla \mathbf{u}|^2)^{\frac{p-1}{2}}|\nabla \mathbf{g}| dx \notag \\
& \qquad + \Upsilon\int_{\Omega} (\sigma^2+|\mathbf{f}|^2)^{\frac{p-1}{2}}|\nabla \mathbf{u}| dx + \Upsilon\int_{\Omega} (\sigma^2+|\mathbf{f}|^2)^{\frac{p-1}{2}}| \nabla \mathbf{g}| dx.
\end{align}
At this stage, thanks to Young's inequality, for every $\epsilon>0$, there exists a constant depending on $p,\epsilon$ such that the following estimate holds
\begin{align}\notag 
|\nabla \mathbf{u}|^p \le  \epsilon \left(\sigma^2 + |\nabla \mathbf{u}|^2\right)^{\frac{p}{2}} + C (\sigma^2+|\nabla \mathbf{u}|^2)^{\frac{p-2}{2}}|\nabla \mathbf{u}|^2,
\end{align}
and from~\eqref{est:global-3}, this implies
\begin{align}\label{est:global-3b}
\int_{\Omega} |\nabla \mathbf{u}|^p dx & \le 3\epsilon \int_{\Omega} |\nabla \mathbf{u}|^p dx + C_{\epsilon} \int_{\Omega} (\sigma^p + |\mathbf{f}|^p + |\nabla \mathbf{g}|^p) dx.
\end{align}
By choosing a suitable value of $\epsilon>0$ in~\eqref{est:global-3b}, we deduce that
\begin{align}\label{est:global-4}
\|\nabla \mathbf{u}\|_{L^p(\Omega;\mathbb{R}^{d^2})} \le C \left(\sigma + \|\mathbf{f}\|_{L^p(\Omega;\mathbb{R}^{d^2})} + \|\nabla \mathbf{g}\|_{L^p(\Omega;\mathbb{R}^{d^2})}\right).
\end{align}
Next, let us consider the following functional
\begin{align}\notag
 \mathcal{F}(\varphi) := \int_{\Omega} \langle \mathbf{A}(x,\nabla \mathbf{u}) - \mathbf{B}(x,\mathbf{f}), \nabla \varphi\rangle dx, \quad \varphi \in W_0^{1,p}(\Omega; \mathbb{R}^d).
\end{align}
Thanks to~\eqref{var-form2}, we know that $\mathcal{F}(\varphi) = 0$ for all $\varphi \in W^{1,p}_{0,\mathrm{div}}(\Omega;\mathbb{R}^d)$. On the other hand, we are allowed to apply~\eqref{cond:A1},~\eqref{cond:Bf} and H\"older's inequality that for every $\varphi \in W_0^{1,p}(\Omega; \mathbb{R}^d)$, it yields that
\begin{align}\label{est-Fvarphi}
|\mathcal{F}(\varphi)| & \le \Upsilon \left(\int_{\Omega} (\sigma^2 + |\nabla \mathbf{u}|^2)^{\frac{p-1}{2}} |\nabla \varphi| dx + \int_{\Omega} (\sigma^2 + |\mathbf{f}|^2)^{\frac{p-1}{2}} |\nabla \varphi| dx\right) \notag \\
& \le C \left(\sigma + \|\mathbf{f}\|_{L^p(\Omega;\mathbb{R}^{d^2})} + \|\nabla \mathbf{u}\|_{L^p(\Omega;\mathbb{R}^{d^2})}\right) \|\nabla \varphi\|_{L^p(\Omega;\mathbb{R}^{d^2})}.
\end{align}
It follows from inequality~\eqref{est:global-4} that
\begin{align}\notag
|\mathcal{F}(\varphi)| & \le C \left(\sigma + \|\mathbf{f}\|_{L^p(\Omega;\mathbb{R}^{d^2})} + \|\nabla \mathbf{g}\|_{L^p(\Omega;\mathbb{R}^{d^2})}\right) \|\nabla \varphi\|_{L^p(\Omega;\mathbb{R}^{d^2})},
\end{align}
and we are able to conclude that $\mathcal{F}$ is a bounded linear functional on $W_0^{1,p}(\Omega; \mathbb{R}^d)$. At this point, thanks to~\cite[Theorem III.5.3]{G11}, there exists a uniquely determined $\mathrm{\pi} \in L^{p'}_{\mathrm{int}}(\Omega)$ such that
$$ \mathcal{F}(\varphi) = \int_{\Omega} \mathrm{\pi} \mathrm{div}(\varphi) dx,$$
for all $\varphi \in W_0^{1,p}(\Omega;\mathbb{R}^d)$. Finally, it concludes that $(\mathbf{u},\mathrm{\pi}) \in W^{1,p}_{\mathbf{g},\mathrm{div}}(\Omega;\mathbb{R}^d) \times L^{p'}_{\mathrm{int}}(\Omega)$ is the unique pair of solutions to~\eqref{eq:Stokes}.

In order to deal with~~\eqref{est:global-1} and~\eqref{est:global-2}, let us now consider 
\begin{align}\notag
h := |\mathrm{\pi}|^{p'-2}\mathrm{\pi} - \fint_{\Omega} |\mathrm{\pi}|^{p'-2}\mathrm{\pi} dx,
\end{align}
and it is easy to check that $h \in L^{p}_{\mathrm{int}}(\Omega)$ and $\|h\|_{L^{p}(\Omega)} \le \|\mathrm{\pi}\|_{L^{p'}(\Omega)}^{p'-1}$. Applying Lemma~\ref{lem:div}, there exists $\varphi_h \in W_0^{1,p}(\Omega;\mathbb{R}^d)$ and $C = C(\Omega, d, p)>0$ such that
\begin{align}\label{est:eq-div}
\mathrm{div}(\varphi_h) = h \ \mbox{ in } \Omega, \mbox{ and } \|\nabla \varphi_h\|_{L^{p}(\Omega;\mathrm{R}^d)} \le C \|h\|_{L^{p}(\Omega)}.
\end{align} 
Here, keeping in mind that $\int_{\Omega} \mathrm{\pi} dx = 0$, we therefore obtain
\begin{align}\notag
\int_{\Omega} |\mathrm{\pi}|^{p'} dx = \int_{\Omega} \mathrm{\pi} \left(|\mathrm{\pi}|^{p'-2}\mathrm{\pi} - \fint_{\Omega} |\mathrm{\pi}|^{p'-2}\mathrm{\pi} dx\right) dx = \int_{\Omega} \mathrm{\pi} h dx = \int_{\Omega} \mathrm{\pi} \mathrm{div}(\varphi_h) dx = \mathcal{F}(\varphi_h).
\end{align}
Inserting~\eqref{est-Fvarphi} and~\eqref{est:eq-div} into this equality, one gets that
\begin{align}
\|\mathrm{\pi}\|_{L^{p'}(\Omega)}^{p'} & \le C \left(\sigma + \|\nabla \mathbf{u}\|_{L^p(\Omega;\mathbb{R}^{d^2})} + \|\mathbf{f}\|_{L^p(\Omega;\mathbb{R}^{d^2})}\right) \|\nabla \varphi_h\|_{L^p(\Omega;\mathbb{R}^{d^2})} \notag \\
& \le  C \left(\sigma + \|\nabla \mathbf{u}\|_{L^p(\Omega;\mathbb{R}^{d^2})}+ \|\mathbf{f}\|_{L^p(\Omega;\mathbb{R}^{d^2})}\right) \|h\|_{L^{p}(\Omega)} \notag \\
& \le  C \left(\sigma + \|\nabla \mathbf{u}\|_{L^p(\Omega;\mathbb{R}^{d^2})}+ \|\mathbf{f}\|_{L^p(\Omega;\mathbb{R}^{d^2})}\right) \|\mathrm{\pi}\|_{L^{p'}(\Omega)}^{p'-1},\notag
\end{align}
and this completes the proof of~\eqref{est:global-2}.  Finally,~\eqref{est:global-1} holds true by combining~\eqref{est:global-4} and~\eqref{est:global-2}.
\end{proof}

\section{Comparison procedures}\label{sec:comp}

As already described, our general plan to prove Theorem~\ref{theo:dist} is to establish a series of comparison procedures with reference problems. We are going to separately study and prove suitable comparison estimates for solution $(\mathbf{u},\mathrm{\pi})$ to the original problem in several steps. Sections~\ref{sec:interior} and~\ref{sec:boundary} are devoted to present in details the comparison strategy in the interior and up to the boundary, respectively.

\subsection{Interior comparison scheme}
\label{sec:interior}

As the first step, we establish the comparison estimate between weak solution $(\mathbf{u},\mathrm{\pi})$ of the original inhomogeneous system and solution $(\mathbf{v},\mathrm{\pi}_{\mathbf{v}})$ of the associated homogeneous one. This is the content of the following lemma.

\begin{lemma}\label{lem:comp-1}
Let $(\mathbf{u},\mathrm{\pi}) \in W^{1,p}_{\mathbf{g},\mathrm{div}}(\Omega;\mathbb{R}^d) \times L^{p'}_{\mathrm{int}}(\Omega)$ be a weak solution pair of system~\eqref{eq:Stokes} with the given data $\mathbf{f} \in L^p(\Omega;\mathbb{R}^{d^2})$ and $\mathbf{g} \in W^{1,p}(\Omega;\mathbb{R}^{d})$. Let $x_0 \in \Omega$ and $R >0$ such that $\mathfrak{B}_4:= B_{4R}(x_0) \subset \Omega$, assume that $(\mathbf{v},\mathrm{\pi}_{\mathbf{v}}) \in W^{1,p}_{\mathbf{u}-\mathbf{g},\mathrm{div}}(\mathfrak{B}_4;\mathbb{R}^d) \times L^{p'}_{\mathrm{int}}(\mathfrak{B}_4)$ is the unique solution to the following problem
\begin{align}\label{eq:hom-1}
\begin{cases}
-\mathrm{div}(\mathbf{A}(x,\nabla \mathbf{v})) + \nabla \mathrm{\pi}_{\mathbf{v}} &= \ 0 \hspace{1.25cm} \ \, \text{in}\ \mathfrak{B}_4, \\
 \hspace{1.4cm} \mathrm{div} (\mathbf{v}) &= \ 0  \hspace{1.3cm} \ \text{in}\ \mathfrak{B}_4, \\
 \hspace{1.8cm} \mathbf{v} & = \ \mathbf{u}-\mathbf{g} \hspace{0.7cm} \text{on}\ \partial \mathfrak{B}_4.
 \end{cases}
\end{align}
Then, there exist two constants $C = C(d,\Upsilon,p)>0$ and $\overline{p}=\overline{p}(p)>0$ such that 
\begin{align}\label{est-comp-1}
 \fint_{\mathfrak{B}_4}|\nabla \mathbf{u} - \nabla \mathbf{v}|^p + |\mathrm{\pi} - \mathrm{\pi}_{\mathbf{v}}|^{p'} dx  & \le \varepsilon \fint_{\mathfrak{B}_4} \left(\sigma^2 + |\nabla \mathbf{u}|^2\right)^{\frac{p}{2}} dx \notag \\
& \hspace{1cm} + C \varepsilon^{-\overline{p}} \fint_{\mathfrak{B}_4} \left(\sigma^p + |\mathbf{f}|^p + |\nabla \mathbf{g}|^p\right) dx,
\end{align}
for every $\varepsilon \in (0,1)$. 
%Moreover, there exists $\theta=\theta(d,\Upsilon,p)>p$ such that
%\begin{align}\label{RH-ineq}
%\fint_{\mathfrak{B}'} (\sigma + |\nabla \mathbf{v}|)^{\theta}dx \le C\left(\fint_{\mathfrak{B}_4} (\sigma + |\nabla \mathbf{v}|)^pdx\right)^{\frac{\theta}{p}},
%\end{align}
%for every $\mathfrak{B}' \subset B_{2R}(x_0)$.
\end{lemma}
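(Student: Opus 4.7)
The plan is to reduce the estimate to the standard $p$-Laplacian machinery by subtracting the weak formulations for $(\mathbf{u},\pi)$ and $(\mathbf{v},\pi_{\mathbf{v}})$ and testing with a carefully modified difference. Since $\mathbf{u}-\mathbf{v}$ equals $\mathbf{g}$ on $\partial\mathfrak{B}_4$ and is divergence-free in $\mathfrak{B}_4$, the natural candidate $\mathbf{u}-\mathbf{v}-\mathbf{g}$ has vanishing trace but divergence $-\mathrm{div}(\mathbf{g})$, which need not be zero. The first step is therefore to invoke Lemma~\ref{lem:div} (the Bogovski\u{\i} operator on the ball $\mathfrak{B}_4$, which is a John domain) to produce $\boldsymbol{\varphi}\in W^{1,p}_0(\mathfrak{B}_4;\mathbb{R}^d)$ with $\mathrm{div}(\boldsymbol{\varphi})=\mathrm{div}(\mathbf{g})$ and $\|\nabla\boldsymbol{\varphi}\|_{L^p(\mathfrak{B}_4)}\le C\|\nabla\mathbf{g}\|_{L^p(\mathfrak{B}_4)}$, so that $\mathbf{w}_0:=\mathbf{u}-\mathbf{v}-\mathbf{g}+\boldsymbol{\varphi}$ lies in $W^{1,p}_{0,\mathrm{div}}(\mathfrak{B}_4;\mathbb{R}^d)$ and is admissible in both~\eqref{var-form1} and the weak form of~\eqref{eq:hom-1}.

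Subtracting these weak formulations and testing with $\mathbf{w}_0$ yields
\begin{align*}
\int_{\mathfrak{B}_4}\!\!\langle \mathbf{A}(x,\nabla\mathbf{u})-\mathbf{A}(x,\nabla\mathbf{v}),\nabla\mathbf{u}-\nabla\mathbf{v}\rangle\,dx
&=\int_{\mathfrak{B}_4}\!\!\langle \mathbf{A}(x,\nabla\mathbf{u})-\mathbf{A}(x,\nabla\mathbf{v}),\nabla\mathbf{g}-\nabla\boldsymbol{\varphi}\rangle\,dx \\
&\quad+\int_{\mathfrak{B}_4}\!\!\langle \mathbf{B}(x,\mathbf{f}),\nabla\mathbf{w}_0\rangle\,dx.
\end{align*}
The left-hand side is bounded below by the monotonicity~\eqref{cond:A2}, while the right-hand side is controlled using the growth bounds~\eqref{cond:A1} and~\eqref{cond:Bf}, H\"older's inequality, and the Bogovski\u{\i} estimate for $\|\nabla\boldsymbol{\varphi}\|_{L^p}$. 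For $p\ge 2$ the lower bound directly produces $\int|\nabla\mathbf{u}-\nabla\mathbf{v}|^p\,dx$; for $1<p<2$ the classical trick of writing $|\nabla\mathbf{u}-\nabla\mathbf{v}|^p$ as $(\sigma^2+|\nabla\mathbf{u}|^2+|\nabla\mathbf{v}|^2)^{p(p-2)/4}|\nabla\mathbf{u}-\nabla\mathbf{v}|^p\cdot(\sigma^2+|\nabla\mathbf{u}|^2+|\nabla\mathbf{v}|^2)^{p(2-p)/4}$ and applying H\"older with exponents $2/p$ and $2/(2-p)$ produces the desired $L^p$-norm of $\nabla\mathbf{u}-\nabla\mathbf{v}$ at the cost of a factor involving $\int(\sigma^2+|\nabla\mathbf{u}|^2)^{p/2}\,dx$ (after absorbing $\nabla\mathbf{v}$ via the a priori estimate from Lemma~\ref{lem:global} applied to $\mathbf{v}$). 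Repeated applications of Young's inequality with a free parameter then split each cross term into an $\varepsilon$-part charged to $\int(\sigma^2+|\nabla\mathbf{u}|^2)^{p/2}\,dx$ and a $C\varepsilon^{-\overline{p}}$-part charged to $\int(\sigma^p+|\mathbf{f}|^p+|\nabla\mathbf{g}|^p)\,dx$, where the exponent $\overline{p}$ depends only on $p$ via the Young conjugates used.

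For the pressure difference, I would exploit the fact that $(\pi-\pi_{\mathbf{v}})\in L^{p'}_{\mathrm{int}}(\mathfrak{B}_4)$ and the non-divergence-free weak formulation~\eqref{var-form2} (which also holds for $\mathbf{v}$ in $\mathfrak{B}_4$) gives, for every $\boldsymbol{\phi}\in W^{1,p}_0(\mathfrak{B}_4;\mathbb{R}^d)$,
\begin{align*}
\int_{\mathfrak{B}_4}(\pi-\pi_{\mathbf{v}})\,\mathrm{div}(\boldsymbol{\phi})\,dx=\int_{\mathfrak{B}_4}\!\!\langle \mathbf{A}(x,\nabla\mathbf{u})-\mathbf{A}(x,\nabla\mathbf{v})-\mathbf{B}(x,\mathbf{f}),\nabla\boldsymbol{\phi}\rangle\,dx.
\end{align*}
Following the duality argument already used in the proof of Lemma~\ref{lem:global}, I set $h:=|\pi-\pi_{\mathbf{v}}|^{p'-2}(\pi-\pi_{\mathbf{v}})-\fint_{\mathfrak{B}_4}|\pi-\pi_{\mathbf{v}}|^{p'-2}(\pi-\pi_{\mathbf{v}})\,dx\in L^{p}_{\mathrm{int}}(\mathfrak{B}_4)$, apply Lemma~\ref{lem:div} to obtain $\boldsymbol{\phi}_h$ with $\mathrm{div}(\boldsymbol{\phi}_h)=h$ and $\|\nabla\boldsymbol{\phi}_h\|_{L^p}\le C\|\pi-\pi_{\mathbf{v}}\|_{L^{p'}}^{p'-1}$, and recover $\|\pi-\pi_{\mathbf{v}}\|_{L^{p'}}^{p'}$ on the left. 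The right-hand side is then estimated by~\eqref{cond:A1},~\eqref{cond:Bf} and H\"older, yielding
$\|\pi-\pi_{\mathbf{v}}\|_{L^{p'}(\mathfrak{B}_4)}\le C\bigl(\|\nabla\mathbf{u}-\nabla\mathbf{v}\|_{L^p(\mathfrak{B}_4)}^{p-1}+\|\mathbf{f}\|_{L^p(\mathfrak{B}_4)}^{p-1}+\sigma^{p-1}|\mathfrak{B}_4|^{1/p'}\bigr)$
after absorbing the $\nabla\mathbf{v}$ contribution via~\eqref{cond:A1} and the a priori bound for $\mathbf{v}$. Raising to the $p'$-th power and inserting the gradient estimate from the previous step closes the argument and produces~\eqref{est-comp-1} after a final rescaling $\varepsilon\mapsto\varepsilon/C$.

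The main obstacle I anticipate is the regime $1<p<2$, where the monotonicity~\eqref{cond:A2} is degenerate and the gradient difference cannot be extracted by a bare Young's inequality; this forces the appearance of the term $\fint(\sigma^2+|\nabla\mathbf{u}|^2)^{p/2}\,dx$ carrying the small factor $\varepsilon$ (rather than $\fint(\sigma^p+|\mathbf{f}|^p+|\nabla\mathbf{g}|^p)\,dx$) and is what dictates the specific split appearing in~\eqref{est-comp-1}. Balancing the exponents in the two-parameter Young's inequality so that a single exponent $\overline{p}=\overline{p}(p)$ controls every dual factor is the delicate computational point, but once the case $p\ge 2$ is handled it is a routine bookkeeping exercise.
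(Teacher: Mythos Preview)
Your approach is correct and follows essentially the same line as the paper: subtract the two weak formulations, test with the difference, exploit the monotonicity~\eqref{cond:A2} and growth~\eqref{cond:A1},~\eqref{cond:Bf} together with Young's inequality to control $\fint|\nabla\mathbf{u}-\nabla\mathbf{v}|^p$, and then recover the pressure difference by the Bogovski\u{\i}/duality argument exactly as in Lemma~\ref{lem:global}. The one superfluous step is your Bogovski\u{\i} correction $\boldsymbol{\varphi}$: by the paper's definition of the solution space, $\mathbf{u}\in W^{1,p}_{\mathbf{g},\mathrm{div}}(\Omega)$ means precisely that $\mathbf{u}-\mathbf{g}\in W^{1,p}_{0,\mathrm{div}}(\Omega)$, so $\mathrm{div}(\mathbf{u}-\mathbf{g})=0$ already, and hence $\mathbf{u}-\mathbf{g}-\mathbf{v}\in W^{1,p}_{0,\mathrm{div}}(\mathfrak{B}_4)$ is an admissible test function without any modification; the paper uses it directly. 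For the degenerate range $1<p<2$, the paper replaces your H\"older-splitting by the equivalent pointwise Young inequality $|\nabla\mathbf{u}-\nabla\mathbf{v}|^p\le \epsilon(\sigma^2+|\nabla\mathbf{u}|^2)^{p/2}+C\epsilon^{-\tilde p}\Phi(\mathbf{u},\mathbf{v})$ with $\Phi(\mathbf{u},\mathbf{v})=(\sigma^2+|\nabla\mathbf{u}|^2+|\nabla\mathbf{v}|^2)^{(p-2)/2}|\nabla\mathbf{u}-\nabla\mathbf{v}|^2$, which avoids invoking any a~priori bound for $\nabla\mathbf{v}$ and leads cleanly to the explicit exponent $\overline{p}$ in~\eqref{over-p}.
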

\begin{proof}
First, using the fact that $(\mathbf{u},\mathrm{\pi}) \in W^{1,p}_{\mathbf{g},\mathrm{div}}(\Omega;\mathbb{R}^d) \times L^{p'}_{\mathrm{int}}(\Omega)$ is a weak solution to~\eqref{eq:Stokes}, we readily obtain
\begin{align}\label{form-u}
\int_{\Omega} \langle \mathbf{A}(x, \nabla \mathbf{u}), \nabla \varphi \rangle dx = \int_{\Omega} \langle \mathbf{B}(x,\mathbf{f}), \nabla \varphi \rangle dx,
\end{align}
for all $\varphi \in  W^{1,p}_{0,\mathrm{div}}(\Omega;\mathbb{R}^d)$ and further,
\begin{align}\label{form-p}
\int_{\Omega} \langle \mathbf{A}(x, \nabla \mathbf{u}), \nabla \varphi \rangle dx - \int_{\Omega} \mathrm{\pi}\mathrm{div}(\varphi) dx = \int_{\Omega} \langle \mathbf{B}(x,\mathbf{f}), \nabla \varphi \rangle dx,
\end{align}
for all $\varphi \in W^{1,p}_0(\Omega;\mathbb{R}^d)$. In a similar fashion, the pair $(\mathbf{v},\mathrm{\pi}_{\mathbf{v}}) \in W^{1,p}_{\mathbf{u}-\mathbf{g},\mathrm{div}}(\mathfrak{B}_4;\mathbb{R}^d) \times L^{p'}_{\mathrm{int}}(\mathfrak{B}_4)$ is a unique solution to homogeneous system~\eqref{eq:hom-1}, we also have
\begin{align}\label{form-v}
\int_{\mathfrak{B}_4} \langle \mathbf{A}(x, \nabla \mathbf{v}), \nabla \varphi \rangle dx = 0,
\end{align}
for every $\varphi \in  W^{1,p}_{0,\mathrm{div}}(\mathfrak{B}_4;\mathbb{R}^d)$ and
\begin{align}\label{form-pv}
\int_{\mathfrak{B}_4} \langle \mathbf{A}(x, \nabla \mathbf{v}), \nabla \varphi \rangle dx - \int_{\mathfrak{B}_4} \mathrm{\pi}_{\mathbf{v}}\mathrm{div}(\varphi) dx = 0,
\end{align}
We test the weak formulations~\eqref{form-u} and~\eqref{form-v} with function $\varphi = \mathbf{u} -\mathbf{g} - \overline{\mathbf{v}} \in W^{1,p}_{0,\mathrm{div}}(\mathfrak{B}_4;\mathbb{R}^d)$, where $\overline{\mathbf{v}}$ is defined in $\Omega$ according to the form: $\overline{\mathbf{v}} = \mathbf{v}$ in $\mathfrak{B}_4$ and $\overline{\mathbf{v}} = \mathbf{u}-\mathbf{g}$ in $\Omega \setminus \mathfrak{B}_4$, it then yields
\begin{align}\label{form-u-v}
\fint_{\mathfrak{B}_4} \langle \mathbf{A}(x, \nabla \mathbf{u}) - \mathbf{A}(x, \nabla \mathbf{v}),  \nabla (\mathbf{u}-\mathbf{g}-\mathbf{v}) \rangle dx = \fint_{\mathfrak{B}_4} \langle \mathbf{B}(x,\mathbf{f}), \nabla (\mathbf{u}-\mathbf{g}-\mathbf{v})\rangle dx.
\end{align}
Exploiting the structure assumptions~\eqref{cond:A1}-\eqref{cond:A2} and~\eqref{cond:Bf} on both the operators $\mathbf{A}$ and $\mathbf{B}$, from~\eqref{form-u-v}, it enables us to deduce that
\begin{align}\label{est-1a}
\fint_{\mathfrak{B}_4} \Phi(\mathbf{u},\mathbf{v}) dx & \le \Upsilon^2 \fint_{\mathfrak{B}_4} \left[(\sigma^2 + |\nabla \mathbf{u}|^2)^{\frac{p-1}{2}} + (\sigma^2 + |\nabla \mathbf{v}|^2)^{\frac{p-1}{2}}\right] |\nabla \mathbf{g}| dx  \notag \\
& \hspace{1cm} + \Upsilon^2  \fint_{\mathfrak{B}_4} \left(\sigma^2 + |\mathbf{f}|^2\right)^{\frac{p-1}{2}} \left(|\nabla \mathbf{u}-\nabla \mathbf{v}|  + |\nabla \mathbf{g}|\right) dx,
\end{align}
where for the sake of shortness, we introduce the further following function $\Phi: W^{1,p}(\Omega) \to \mathbb{R}^+$ defined as
\begin{align}\label{def:Phi}
\Phi(\mathbf{u},\mathbf{v}) := \left(\sigma^2 + |\nabla \mathbf{u}|^2 + |\nabla \mathbf{v}|^2\right)^{\frac{p-2}{2}} |\nabla \mathbf{u}- \nabla \mathbf{v}|^2.
\end{align}
Applying Young's inequality and reabsorb into the right-hand side of~\eqref{est-1a}, for every $\epsilon_1,\epsilon_2>0$ to arrive at
\begin{align}\label{est-2a}
\fint_{\mathfrak{B}_4} \Phi(\mathbf{u},\mathbf{v}) dx & \le \epsilon_1 \fint_{\mathfrak{B}_4} (\sigma^2 + |\nabla \mathbf{u}|^2)^{\frac{p}{2}} dx + \epsilon_2 \fint_{\mathfrak{B}_4} |\nabla \mathbf{u}-\nabla \mathbf{v}|^p dx \notag \\
& \hspace{1cm} + C \left(\epsilon_1^{-(p-1)} + \epsilon_2^{-\frac{1}{p-1}}\right)  \fint_{\mathfrak{B}_4} \left(\sigma^p + |\mathbf{f}|^p + |\nabla \mathbf{g}|^p\right) dx,
\end{align}
where $C$ is a constant depending only on $\Upsilon,p$. Again, carefully exploiting Young’s inequality for every $\epsilon_3>0$,  it finds $C = C(p)>0$ such that 
\begin{align}\label{est-3a}
|\nabla \mathbf{u}- \nabla \mathbf{v}|^p \le  \chi_{\{p<2\}} \epsilon_3 \left(\sigma^2 + |\nabla \mathbf{u}|^2\right)^{\frac{p}{2}} + C \epsilon_3^{-\tilde{p}} \Phi(\mathbf{u},\mathbf{v}), 
\end{align}
where $\chi_{\{p<2\}}$ and $\tilde{p}$ are defined as 
\begin{align*}
 \chi_{\{p<2\}} = \begin{cases} 1, \ & \mbox{ if } p<2,\\ 0, \ & \mbox { if } p \ge 2,\end{cases} \  \mbox{ and } \ \tilde{p} = \max\left\{0; \frac{2}{p}-1\right\}.
\end{align*}
Combining~\eqref{est-3a} with the comparison estimate from~\eqref{est-2a}, it leads us to obtain
\begin{align}\label{est-4a}
\fint_{\mathfrak{B}_4} |\nabla \mathbf{u} - \nabla \mathbf{v}|^p dx & \le \left( \chi_{\{p<2\}} \epsilon_3 + C \epsilon_3^{-\tilde{p}} \epsilon_1 \right) \fint_{\mathfrak{B}_4} \left(\sigma^2 + |\nabla \mathbf{u}|^2\right)^{\frac{p}{2}} dx \notag \\
& \quad + C \epsilon_3^{-\tilde{p}} \epsilon_2 \fint_{\mathfrak{B}_4} |\nabla \mathbf{u}-\nabla \mathbf{v}|^p dx \notag \\
& \hspace{1cm} + C \epsilon_3^{-\tilde{p}} \left(\epsilon_1^{-(p-1)} + \epsilon_2^{-\frac{1}{p-1}}\right)  \fint_{\mathfrak{B}_4} \left(\sigma^p + |\mathbf{f}|^p + |\nabla \mathbf{g}|^p\right) dx.
\end{align}
Having arrived at this stage, for every $\varepsilon >0$, let us choose $\epsilon_1, \epsilon_2, \epsilon_3>0$ depending on $\varepsilon$  such that
\begin{align*}
\chi_{\{p<2\}} \epsilon_3 + C \epsilon_3^{-\tilde{p}} \epsilon_1 = \frac{\varepsilon}{2}, \quad C \epsilon_3^{-\tilde{p}} \epsilon_2 = \frac{1}{2},
\end{align*}
and then, re-absorb in~\eqref{est-4a}, it yields the following estimate
\begin{align}\label{est-5a}
\fint_{\mathfrak{B}_4} |\nabla \mathbf{u} - \nabla \mathbf{v}|^p dx & \le \varepsilon \fint_{\mathfrak{B}_4} \left(\sigma^2 + |\nabla \mathbf{u}|^2\right)^{\frac{p}{2}} dx \notag \\
& \hspace{1cm} + C \varepsilon^{-\overline{p}} \fint_{\mathfrak{B}_4} \left(\sigma^p + |\mathbf{f}|^p + |\nabla \mathbf{g}|^p\right) dx.
\end{align}
Here, for ease of notation, we denote where $\overline{p}$ by
\begin{align}\label{over-p}
\overline{p} := \max\left\{\frac{p \tilde{p}}{p-1}; \, p \tilde{p}+p-1\right\} = \begin{cases} p-1, & \quad \mbox{if } p \ge 2, \\ 1, & \quad \mbox{if } \frac{3}{2} < p < 2, \\ \frac{2-p}{p-1}, & \quad \mbox{if } 1<p \le \frac{3}{2}. \end{cases}
\end{align}
Next, we will infer a comparison between two associated pressures. From~\eqref{form-p} and~\eqref{form-pv}, it leads us to
\begin{align} %\label{form-v-pv}
& \int_{\mathfrak{B}_4} \langle \mathbf{A}(x, \nabla \mathbf{u})-\mathbf{A}(x, \nabla\mathbf{v}), \nabla \varphi \rangle dx - \int_{\mathfrak{B}_4} (\mathrm{\pi}-\mathrm{\pi}_{\mathbf{v}})\mathrm{div}(\varphi) dx  = \int_{\mathfrak{B}_4} \langle \mathbf{B}(x, \mathbf{f}), \nabla \varphi \rangle dx, \notag
\end{align}
for all $\varphi \in W^{1,p}_0(\mathfrak{B}_4;\mathbb{R}^d)$. Finally, the strategy of proof is now to apply~\eqref{est:global-2} in Lemma~\ref{lem:global} to deduce
\begin{align}\notag %\label{est-6a}
& \fint_{\mathfrak{B}_4}|\mathrm{\pi} - \mathrm{\pi}_{\mathbf{v}}|^{p'} dx \le C \fint_{\mathfrak{B}_4}\left(\sigma^p + |\mathbf{f}|^p + |\nabla \mathbf{u} - \nabla \mathbf{v}|^p\right) dx, 
\end{align}
and the desired result follows by~\eqref{est-5a}. This finishes the proof
\end{proof}

The second step consists in considering homogeneous constant coefficients problem and establishing the comparison estimate between weak solution pairs. A bit more precise, we shall focus on the new problem where the matrix-valued function $\mathbf{A}$ just keeps integral average over $B_{2R}(x_0)$ entries. This is the content of the following comparison lemma.
\begin{lemma}\label{lem:comp-2}
Under the assumptions of Lemma~\ref{lem:comp-1}, we further denote $\mathfrak{B}_1 := B_{R}(x_0)$ and $\mathfrak{B}_2 := B_{2R}(x_0)$. Assume that $(\mathbf{w},\mathrm{\pi}_{\mathbf{w}}) \in W^{1,p}_{\mathbf{v},\mathrm{div}}(\mathfrak{B}_2;\mathbb{R}^d) \times L^{p'}_{\mathrm{int}}(\mathfrak{B}_2)$ is the unique solution to the following homogeneous constant coefficients problem
\begin{align}\label{eq:hom-2}
\begin{cases}
-\mathrm{div}(\overline{\mathbf{A}}_{\mathfrak{B}_2}(\nabla \mathbf{w})) + \nabla \mathrm{\pi}_{\mathbf{w}} &= \ 0 \hspace{1.25cm} \ \, \text{in}\ \mathfrak{B}_2, \\
 \hspace{1.4cm} \mathrm{div} (\mathbf{w}) &= \ 0  \hspace{1.3cm} \ \text{in}\ \mathfrak{B}_2, \\
 \hspace{1.8cm} \mathbf{w} & = \ \mathbf{v} \hspace{1.4cm} \text{on}\ \partial \mathfrak{B}_2.
 \end{cases}
\end{align}
Then, there exist $\kappa=\kappa(d,\Upsilon,p)>0$ and $C = C(d,\Upsilon,p)>0$ such that whenever $[\mathbf{A}]_{R_0} \le \delta$ for some $\delta \in (0,1)$, the estimate
\begin{align}\label{est-comp-2}
 \fint_{\mathfrak{B}_2}|\nabla \mathbf{u} - \nabla \mathbf{w}|^p + |\mathrm{\pi} - \mathrm{\pi}_{\mathbf{w}}|^{p'} dx  & \le C \delta^{\kappa} \fint_{\mathfrak{B}_4} \left(\sigma^2 + |\nabla \mathbf{u}|^2\right)^{\frac{p}{2}} dx \notag \\
& \hspace{1cm} + C \delta^{-\overline{p}\kappa} \fint_{\mathfrak{B}_4} \left(\sigma^p + |\mathbf{f}|^p + |\nabla \mathbf{g}|^p\right) dx
\end{align}
holds true, where $\overline{p}$ has been defined in~\eqref{over-p}. Moreover, there holds
\begin{align}\label{est-comp-3}
 \|\nabla \mathbf{w}\|_{L^{\infty}(\mathfrak{B}_1;\mathbb{R}^{d^2})}^p + \|\mathrm{\pi}_{\mathbf{w}}\|_{L^{\infty}(\mathfrak{B}_1)}^{p'} &\le C\fint_{\mathfrak{B}_4} \left(\sigma^2 + |\nabla \mathbf{u}|^2\right)^{\frac{p}{2}} dx \notag \\
& \hspace{1cm}  + C \fint_{\mathfrak{B}_4} \left(\sigma^p + |\mathbf{f}|^p + |\nabla \mathbf{g}|^p\right) dx,
\end{align}
where the constant $C$ depends upon $(d,\Upsilon,p)$.
\end{lemma}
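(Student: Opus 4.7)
The plan is to interpolate through the intermediate solution $(\mathbf{v},\mathrm{\pi}_{\mathbf{v}})$ of Lemma~\ref{lem:comp-1} and then invoke the classical interior Lipschitz regularity for homogeneous constant-coefficient $p$-Stokes systems. By the triangle inequality
\begin{align*}
& \fint_{\mathfrak{B}_2}\bigl(|\nabla\mathbf{u}-\nabla\mathbf{w}|^p + |\mathrm{\pi}-\mathrm{\pi}_{\mathbf{w}}|^{p'}\bigr)\,dx \\
& \qquad \le C \fint_{\mathfrak{B}_2}\bigl(|\nabla\mathbf{u}-\nabla\mathbf{v}|^p + |\mathrm{\pi}-\mathrm{\pi}_{\mathbf{v}}|^{p'}\bigr)\,dx + C \fint_{\mathfrak{B}_2}\bigl(|\nabla\mathbf{v}-\nabla\mathbf{w}|^p + |\mathrm{\pi}_{\mathbf{v}}-\mathrm{\pi}_{\mathbf{w}}|^{p'}\bigr)\,dx,
\end{align*}
the first summand is already controlled by \eqref{est-comp-1} on the larger ball $\mathfrak{B}_4$, where I eventually pick $\varepsilon=\delta^{\kappa}$ and absorb the volume factor $|\mathfrak{B}_4|/|\mathfrak{B}_2|$ into the constant, so only the second summand has to be produced from scratch.

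For the comparison of $\mathbf{v}$ and $\mathbf{w}$, I observe that both are divergence-free and agree on $\partial\mathfrak{B}_2$, so $\varphi=\mathbf{v}-\mathbf{w}\in W^{1,p}_{0,\mathrm{div}}(\mathfrak{B}_2;\mathbb{R}^d)$ is admissible in the pressure-free formulations of \eqref{eq:hom-1} and \eqref{eq:hom-2}; subtracting gives
\begin{align*}
\int_{\mathfrak{B}_2}\bigl\langle \overline{\mathbf{A}}_{\mathfrak{B}_2}(\nabla\mathbf{v})-\overline{\mathbf{A}}_{\mathfrak{B}_2}(\nabla\mathbf{w}),\nabla(\mathbf{v}-\mathbf{w})\bigr\rangle dx = \int_{\mathfrak{B}_2}\bigl\langle \overline{\mathbf{A}}_{\mathfrak{B}_2}(\nabla\mathbf{v})-\mathbf{A}(x,\nabla\mathbf{v}),\nabla(\mathbf{v}-\mathbf{w})\bigr\rangle dx.
\end{align*}
Structural assumption \eqref{cond:A2} applied to the frozen operator $\overline{\mathbf{A}}_{\mathfrak{B}_2}$ bounds the left-hand side from below by $\Upsilon^{-1}\int_{\mathfrak{B}_2}\Phi(\mathbf{v},\mathbf{w})\,dx$, while the BMO bound \eqref{BMO} provides the pointwise estimate $|\overline{\mathbf{A}}_{\mathfrak{B}_2}(\nabla\mathbf{v})-\mathbf{A}(x,\nabla\mathbf{v})|\le \Theta(x)(\sigma^2+|\nabla\mathbf{v}|^2)^{(p-1)/2}$ with $\fint_{\mathfrak{B}_2}\Theta\le \delta$. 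Combining H\"older's inequality with the self-improving higher integrability (Gehring-type reverse H\"older) of $|\nabla\mathbf{v}|$ on sub-balls of $\mathfrak{B}_4$ upgrades the right-hand side to a bound of order $\delta^{\kappa}$ times the natural energy of $\mathbf{v}$, for some $\kappa=\kappa(d,p,\Upsilon)\in(0,1)$. Re-running the Young-type manipulation \eqref{est-3a} on $\Phi(\mathbf{v},\mathbf{w})$ converts this into a bound on $\fint_{\mathfrak{B}_2}|\nabla\mathbf{v}-\nabla\mathbf{w}|^p\,dx$, and Lemma~\ref{lem:global} (rescaled to $\mathfrak{B}_2$, applied to the system solved by $(\mathbf{v}-\mathbf{w},\mathrm{\pi}_{\mathbf{v}}-\mathrm{\pi}_{\mathbf{w}})$ with the divergence source $\overline{\mathbf{A}}_{\mathfrak{B}_2}(\nabla\mathbf{v})-\mathbf{A}(x,\nabla\mathbf{v})$) transfers this into the desired $L^{p'}$-bound for $\mathrm{\pi}_{\mathbf{v}}-\mathrm{\pi}_{\mathbf{w}}$. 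Finally, the $|\nabla\mathbf{v}|$-energies are converted back to $|\nabla\mathbf{u}|$-energies via the energy inequality for \eqref{eq:hom-1} and Lemma~\ref{lem:comp-1}, producing \eqref{est-comp-2}.

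For the Lipschitz bound \eqref{est-comp-3}, I use that $(\mathbf{w},\mathrm{\pi}_{\mathbf{w}})$ solves a homogeneous constant-coefficient $p$-Stokes system, for which the classical interior $C^{1,\alpha}$ theory (cf.\ the references \cite{DJS2011,GS2015,Dong2018,DKS2014} cited in the introduction) yields
\begin{align*}
\|\nabla\mathbf{w}\|_{L^\infty(\mathfrak{B}_1;\mathbb{R}^{d^2})}^p + \|\mathrm{\pi}_{\mathbf{w}}\|_{L^\infty(\mathfrak{B}_1)}^{p'} \le C\fint_{\mathfrak{B}_2}\bigl(\sigma^p+|\nabla\mathbf{w}|^p+|\mathrm{\pi}_{\mathbf{w}}|^{p'}\bigr)dx.
\end{align*}
Lemma~\ref{lem:global} applied to the homogeneous problem \eqref{eq:hom-2} bounds $\|\mathrm{\pi}_{\mathbf{w}}\|_{L^{p'}(\mathfrak{B}_2)}$ by $\sigma + \|\nabla\mathbf{w}\|_{L^p(\mathfrak{B}_2)}$, and the energy estimate for \eqref{eq:hom-2} plus the boundary datum $\mathbf{w}|_{\partial\mathfrak{B}_2}=\mathbf{v}$ controls $\|\nabla\mathbf{w}\|_{L^p(\mathfrak{B}_2)}$ in terms of $\|\nabla\mathbf{v}\|_{L^p(\mathfrak{B}_2)}$; Lemma~\ref{lem:comp-1} and the a priori estimate for $\mathbf{u}$ then absorb everything into the right-hand side of \eqref{est-comp-3}. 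The main obstacle in this whole scheme is the extraction of a genuine power $\delta^{\kappa}$ from the BMO deviation: a bare H\"older argument only produces $\delta^{1/p}$ against a super-critical energy of $\nabla\mathbf{v}$, and closing the estimate requires the Gehring-type higher integrability $|\nabla\mathbf{v}|\in L^{p(1+\eta)}_{\mathrm{loc}}$ available for homogeneous solutions — this is what ultimately fixes the admissible $\kappa\in(0,1)$ and, together with \eqref{over-p}, the dependence $\delta^{-\overline{p}\kappa}$ on the data side.
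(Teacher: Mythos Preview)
Your proposal is correct and follows essentially the same route as the paper: subtract the weak formulations of \eqref{eq:hom-1} and \eqref{eq:hom-2} tested with $\mathbf{v}-\mathbf{w}$, use \eqref{cond:A2} on the frozen operator, extract the $\delta^{\kappa}$ factor via H\"older combined with the Gehring-type higher integrability of $\nabla\mathbf{v}$, convert $\Phi(\mathbf{v},\mathbf{w})$ to $|\nabla\mathbf{v}-\nabla\mathbf{w}|^p$ by the Young trick \eqref{est-3a}, recover the pressure via the argument of Lemma~\ref{lem:global}, then combine with Lemma~\ref{lem:comp-1} at $\varepsilon=\delta^{\kappa}$; for \eqref{est-comp-3} the paper invokes the classical interior regularity of \cite{GM82} in place of the references you cite, but the step is the same.
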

\begin{proof}
First, we go once back to the homogeneous problem~\eqref{eq:hom-1} with a unique solution $(\mathbf{v},\mathrm{\pi}_{\mathbf{v}})$ shown in Lemma~\ref{lem:comp-1}. From problems~\eqref{eq:hom-1} and~\eqref{eq:hom-2}, it follows that
\begin{align}\label{eq:hom-3}
\begin{cases}
-\mathrm{div}(\overline{\mathbf{A}}_{\mathfrak{B}_2}(\nabla \mathbf{v}) -\overline{\mathbf{A}}_{\mathfrak{B}_2}(\nabla \mathbf{w})) + \nabla (\mathrm{\pi}_{\mathbf{v}}-\mathrm{\pi}_{\mathbf{w}})  &= \ \mathrm{div}(\mathbf{A}(x,\nabla \mathbf{v}) - \overline{\mathbf{A}}_{\mathfrak{B}_2}(\nabla \mathbf{v})) \quad  \text{in}\ \mathfrak{B}_2, \\
 \hspace{2cm} \mathrm{div} (\mathbf{v}-\mathbf{w}) &= \ 0  \hspace{4.55cm} \ \text{in}\ \mathfrak{B}_2, \\
 \hspace{2.4cm} \mathbf{v}-\mathbf{w} & = \ 0 \hspace{4.65cm} \text{on}\ \partial \mathfrak{B}_2.
 \end{cases}
\end{align}
Testing the weak form of~\eqref{eq:hom-3} with $\varphi = \mathbf{v} - \mathbf{w} \in W^{1,p}_{0,\mathrm{div}}(\mathfrak{B}_2;\mathbb{R}^d)$, we achieve
\begin{align}\notag %\label{form-v-w}
\fint_{\mathfrak{B}_2} \langle \overline{\mathbf{A}}_{\mathfrak{B}_2}(\nabla \mathbf{v}) -\overline{\mathbf{A}}_{\mathfrak{B}_2}(\nabla \mathbf{w}),  \nabla (\mathbf{v}-\mathbf{w}) \rangle dx = \fint_{\mathfrak{B}_2} \langle \mathbf{A}(x,\nabla \mathbf{v}) - \overline{\mathbf{A}}_{\mathfrak{B}_2}(\nabla \mathbf{v}), \nabla (\mathbf{v}-\mathbf{w})\rangle dx.
\end{align}
Thanks to assumption~\eqref{cond:A2}, it results that
\begin{align}\label{v-w-1}
& \fint_{\mathfrak{B}_2} \Phi(\mathbf{v},\mathbf{w}) dx \le \Upsilon \fint_{\mathfrak{B}_2} \mathcal{O}(x) \left(\sigma^2 + |\nabla \mathbf{v}|^2\right)^{\frac{p-1}{2}} |\nabla (\mathbf{v}-\mathbf{w})|  dx, 
\end{align}
where $\Phi$ is just given in~\eqref{def:Phi}, the new function $\mathcal{O}$ defined by
\begin{align*}
\mathcal{O}(x) = \sup_{\nu \in \mathbb{R}^{d^2}\setminus\{0\}} \frac{|\mathbf{A}(x,\nu) - \overline{\mathbf{A}}_{\mathfrak{B}_2}(\nu)|}{(\sigma^2 +|\nu|^2)^{\frac{p-1}{2}}}.
\end{align*}
Using H\"older's inequality for the right-hand side of~\eqref{v-w-1}, we therefore deduce that for every $\epsilon_1>0$ 
\begin{align}\label{v-w-1.1}
\fint_{\mathfrak{B}_2} \Phi(\mathbf{v},\mathbf{w}) dx & \le \epsilon_1 \fint_{\mathfrak{B}_2} |\nabla \mathbf{v}- \nabla \mathbf{w}|^p dx +  C \epsilon_1^{-\frac{1}{p-1}} \fint_{\mathfrak{B}_2} [\mathcal{O}(x)]^{\frac{p}{p-1}} \left(\sigma^2 + |\nabla \mathbf{v}|^2\right)^{\frac{p}{2}} dx.
\end{align}
The standard higher integrability of $\mathbf{v}$ gives us the existence of $\theta=\theta(d,\Upsilon,p)>p$ such that
\begin{align}\label{RH-v}
\fint_{\mathfrak{B}_2} (\sigma^2 + |\nabla \mathbf{v}|^2)^{\frac{\theta}{2}}dx \le C\left(\fint_{\mathfrak{B}_4} (\sigma^2 + |\nabla \mathbf{v}|^2)^{\frac{p}{2}}dx\right)^{\frac{\theta}{p}}.
\end{align}
Combining~\eqref{RH-v} with~\eqref{v-w-1.1} and using again H\"older's inequality, it gives
\begin{align}%\label{v-w-2}
 \fint_{\mathfrak{B}_2} \Phi(\mathbf{v},\mathbf{w}) dx & \le \epsilon_1 \fint_{\mathfrak{B}_2} |\nabla \mathbf{v}- \nabla \mathbf{w}|^p dx \notag \\
& \qquad +  C \epsilon_1^{-\frac{1}{p-1}} \left(\fint_{\mathfrak{B}_2} [\mathcal{O}(x)]^{\frac{p}{p-1} \frac{\theta}{\theta-p}} dx\right)^{1-\frac{p}{\theta}} \left(\fint_{\mathfrak{B}_2} \left(\sigma^2 + |\nabla \mathbf{v}|^2\right)^{\frac{\theta}{2}} dx\right)^{\frac{p}{\theta}} \notag \\
& \le \epsilon_1 \fint_{\mathfrak{B}_2} |\nabla \mathbf{v}- \nabla \mathbf{w}|^p dx \notag \\
& \qquad +  C \epsilon_1^{-\frac{1}{p-1}} \left(\fint_{\mathfrak{B}_2} \mathcal{O}(x) dx\right)^{1-\frac{p}{\theta}} \left(\fint_{\mathfrak{B}_4} \left(\sigma^2 + |\nabla \mathbf{v}|^2\right)^{\frac{p}{2}} dx\right).\notag
\end{align}
Here, it should be noted that under the structure condition~\eqref{cond:A1}, $\mathcal{O}(x) \le 2\Upsilon$ for every $x \in \Omega$. Further, since $[\mathbf{A}]_{R_0} \le \delta$, we infer that
\begin{align}\label{v-w-3}
 \fint_{\mathfrak{B}_2} \Phi(\mathbf{v},\mathbf{w}) dx & \le \epsilon_1 \fint_{\mathfrak{B}_2} |\nabla \mathbf{v}- \nabla \mathbf{w}|^p dx  +  C \epsilon_1^{-\frac{1}{p-1}} {\delta}^{1-\frac{p}{\theta}} \fint_{\mathfrak{B}_4} \left(\sigma^2 + |\nabla \mathbf{v}|^2\right)^{\frac{p}{2}} dx.
\end{align}
By the same argument as in~\eqref{est-3a}, for every $\epsilon_2>0$, there holds
\begin{align}\notag
|\nabla \mathbf{v}- \nabla \mathbf{w}|^p \le  \chi_{\{p<2\}} \epsilon_2 \left(\sigma^2 + |\nabla \mathbf{v}|^2\right)^{\frac{p}{2}} + C \epsilon_2^{-\tilde{p}} \Phi(\mathbf{v},\mathbf{w}), 
\end{align}
where $\tilde{p}$ given in~\eqref{est-3a}. As a result, it follows from~\eqref{v-w-3} that
\begin{align}\label{v-w-4}
\fint_{\mathfrak{B}_2} |\nabla \mathbf{v}- \nabla \mathbf{w}|^p dx  & \le C\left( \chi_{\{p<2\}} \epsilon_2 + \epsilon_2^{-\tilde{p}} \epsilon_1^{-\frac{1}{p-1}} {\delta}^{1-\frac{p}{\theta}}\right) \fint_{\mathfrak{B}_4} \left(\sigma^2 + |\nabla \mathbf{v}|^2\right)^{\frac{p}{2}} dx \notag \\
& \qquad + C \epsilon_2^{-\tilde{p}} \epsilon_1 \fint_{\mathfrak{B}_2} |\nabla \mathbf{v}- \nabla \mathbf{w}|^p dx.
\end{align}
Now, let us fix $\epsilon_1$ and $\epsilon_2$ such that 
$$C \epsilon_2^{-\tilde{p}} \epsilon_1 \le 1/2, \mbox{ and } \epsilon_2 = \epsilon_2^{-\frac{\tilde{p} p}{p-1}} {\delta}^{1-\frac{p}{\theta}} \Leftrightarrow \epsilon_2 = {\delta}^{\left(1-\frac{p}{\theta}\right)\frac{p-1}{\tilde{p} p+p-1}},$$
and then, it can be deduced from~\eqref{v-w-4} that
\begin{align}\label{v-w-5}
\fint_{\mathfrak{B}_2} |\nabla \mathbf{v}- \nabla \mathbf{w}|^p dx  & \le C {\delta}^{\frac{(\theta-p)(p-1)}{\theta(\tilde{p} p+p-1)}} \fint_{\mathfrak{B}_4} \left(\sigma^2 + |\nabla \mathbf{v}|^2\right)^{\frac{p}{2}} dx.
\end{align}
In order to simplify the notation, we shall denote
\begin{align}\label{def:kappa}
\kappa := \frac{(\theta-p)(p-1)}{\theta(\tilde{p} p+p-1)}.
\end{align}
By the same way as the proof of~\eqref{est:global-2} in Lemma~\ref{lem:global}, combining~\eqref{v-w-5} and~\eqref{def:kappa} together leads us to
\begin{align}\notag
& \fint_{\mathfrak{B}_2}|\nabla \mathbf{v} - \nabla \mathbf{w}|^p + |\mathrm{\pi}_{\mathbf{v}} - \mathrm{\pi}_{\mathbf{w}}|^{p'} dx \notag \le C {\delta}^{\kappa} \fint_{\mathfrak{B}_4} \left(\sigma^2 + |\nabla \mathbf{v}|^2\right)^{\frac{p}{2}} dx \notag \\
& \hspace{4cm} \le C {\delta}^{\kappa} \fint_{\mathfrak{B}_4} \left(\sigma^2 + |\nabla \mathbf{u}|^2\right)^{\frac{p}{2}} dx + C {\delta}^{\kappa} \fint_{\mathfrak{B}_4} |\nabla \mathbf{u} - \nabla \mathbf{v}|^p dx. \label{v-w-6}
\end{align}
On the other hand, applying Lemma~\ref{lem:comp-1} for $\varepsilon = {\delta}^{\kappa}$ yields
\begin{align}\label{v-w-7}
 \fint_{\mathfrak{B}_4}|\nabla \mathbf{u} - \nabla \mathbf{v}|^p + |\mathrm{\pi} - \mathrm{\pi}_{\mathbf{v}}|^{p'} dx  & \le {\delta}^{\kappa} \fint_{\mathfrak{B}_4} \left(\sigma^2 + |\nabla \mathbf{u}|^2\right)^{\frac{p}{2}} dx \notag \\
& \hspace{1cm} + C {\delta}^{-\overline{p}\kappa} \fint_{\mathfrak{B}_4} \left(\sigma^p + |\mathbf{f}|^p + |\nabla \mathbf{g}|^p\right) dx,
\end{align}
where $\overline{p}$ given in~\eqref{over-p}. Having arrived at this stage, combining~\eqref{v-w-6} and~\eqref{v-w-7} leads to
\begin{align}\label{v-w-8}
\fint_{\mathfrak{B}_2}|\nabla \mathbf{u} - \nabla \mathbf{w}|^p  + |\mathrm{\pi} - \mathrm{\pi}_{\mathbf{w}}|^{p'} dx & \le C\fint_{\mathfrak{B}_2}|\nabla \mathbf{u} - \nabla \mathbf{v}|^p + |\mathrm{\pi} - \mathrm{\pi}_{\mathbf{v}}|^{p'} dx \notag \\
& \hspace{1cm} + C\fint_{\mathfrak{B}_2}|\nabla \mathbf{v} - \nabla \mathbf{w}|^p + |\mathrm{\pi}_{\mathbf{v}} - \mathrm{\pi}_{\mathbf{w}}|^{p'} dx, \notag \\
& \le C {\delta}^{\kappa} \fint_{\mathfrak{B}_4} \left(\sigma^2 + |\nabla \mathbf{u}|^2\right)^{\frac{p}{2}} dx \notag \\
& \hspace{1cm} + C \varepsilon^{-\overline{p} \kappa} \fint_{\mathfrak{B}_4} \left(\sigma^p + |\mathbf{f}|^p + |\nabla \mathbf{g}|^p\right) dx,
\end{align}
which concludes~\eqref{est-comp-2}. Finally, in order to prove~\eqref{est-comp-3}, we take the use of a standard regularity for solution of problem~\eqref{eq:hom-3} as in~\cite{GM82} to ensure that
\begin{align}\notag
& \|\nabla \mathbf{w}\|_{L^{\infty}(\mathfrak{B}_1;\mathbb{R}^{d^2})}^p + \|\mathrm{\pi}_{\mathbf{w}}\|_{L^{\infty}(\mathfrak{B}_1)}^{p'} \le C\fint_{\mathfrak{B}_2} \left(\sigma^2 + |\nabla \mathbf{w}|^2\right)^\frac{p}{2} dx.
\end{align}
The proof is now complete by applying~\eqref{v-w-8}.  
\end{proof}

\subsection{Up-to-the-boundary comparison scheme}
\label{sec:boundary}

The next step of comparison scheme specifies the up-to-boundary extension of the interior results in Lemma~\ref{lem:comp-1} and~\ref{lem:comp-2}.  In general, to prove comparison estimates near the boundary of $\Omega$, one could go through the same process as interior proofs. However, the main difficulty is to estimate the $L^\infty$-gradient bounds near the boundary, especially when $\partial\Omega$ is not sufficiently smooth. Therefore, the structural assumptions~\eqref{cond:Reif} (Reifenberg flatness condition) and~\eqref{BMO} (the small BMO condition) are further imposed to our problem, i.e. 
 $(\Omega,\mathbf{A}) \in \mathcal{H}(\delta,R_0)$, for some $\delta \in (0,1/81)$ and $R_0>0$. 
 
%Let us first fix $x_0 \in \partial \Omega$ and $R \in (0,R_0/9)$, and for each $k \in \mathbb{N}, k>0$, we denote by
%\begin{align*}
%\Omega_{k} := B_{kR}(x_0) \cap \Omega
%\end{align*}
%the ``surface ball'' of $\Omega$ that associated with $B_{kR}(x_0)$.  

The proof of boundary version is non-trivial. By approximation scheme, we are able to approximate a portion of the Reifenberg-flat boundary by an $(d-1)$-dimensional hyper-plane. Before going into details let us briefly describe the idea. At a fixed point $x_0 \in \partial\Omega$ and $R \in (0,R_0/9)$, Reifenberg flat assumption allows us to find a new coordinate system with its origin $O$ belonging to $\Omega$ and a ball $\tilde{B}_{\varrho}(O)$ with radius $\varrho>0$ such that its upper-half $\tilde{B}_{\varrho}^+(O)$ satisfying
\begin{align*}
\tilde{B}_{\varrho}^+(O) \subset \Omega, \quad \text{and} \quad x_0 \not\in \tilde{B}_{\varrho}^+(O) \cap \Omega.
\end{align*}
On this intersection $\tilde{B}_{\varrho}^+(O) \cap \Omega$, under these structural assumptions, we are able to  estimate the $L^\infty$-norm for solution of homogeneous boundary value problem. 

To be precise, this will be done in a multi-step comparison argument. First, for each $k \in \mathbb{N}$, we denote by $\Omega_{k} := B_{kR}(x_0) \cap \Omega$ the ``surface ball'' of $\Omega$ that associated with $B_{kR}(x_0)$. Similar to the interior case, we construct the comparison between weak solution $(\mathbf{u},\pi_{\mathbf{u}})$ and the unique solution $(\mathbf{v},\mathrm{\pi}_{\mathbf{v}}) \in W^{1,p}_{\mathbf{u}-\mathbf{g},\mathrm{div}}(\Omega_{10};\mathbb{R}^d) \times L^{p'}_{\mathrm{int}}(\Omega_{10})$ of the corresponding homogeneous boundary-value problem 
\begin{align*}
\begin{cases}
-\mathrm{div}(\mathbf{A}(x,\nabla \mathbf{v})) + \nabla \mathrm{\pi}_{\mathbf{v}} &= \ 0 \hspace{1.25cm} \ \, \text{in}\ \Omega_{10}, \\
 \hspace{1.4cm} \mathrm{div} (\mathbf{v}) &= \ 0  \hspace{1.3cm} \ \text{in}\ \Omega_{10}, \\
 \hspace{1.8cm} \mathbf{v} & = \ \mathbf{u}-\mathbf{g} \hspace{0.7cm} \text{on}\ \partial \Omega_{10}.
 \end{cases}
\end{align*}
Next, due to the dilation argument, Reifenberg flatness condition and the small BMO semi-norm, we compare $(\mathbf{v},\pi_{\mathbf{v}})$ to the (unique) solution $(\tilde{\mathbf{v}},\mathrm{\pi}_{\tilde{\mathbf{v}}}) \in W^{1,p}_{\mathbf{v},\mathrm{div}}(\tilde{B}_\varrho(O) \cap \Omega;\mathbb{R}^d) \times L^{p'}_{\mathrm{int}}(\tilde{B}_\varrho(O) \cap \Omega)$ of the following Dirichlet problem in a smaller domain with the flat boundary
\begin{align*}
\begin{cases}
-\mathrm{div}(\overline{\mathbf{A}}_{\tilde{B}_{\varrho}}(\nabla \tilde{\mathbf{v}})) + \nabla \mathrm{\pi}_{\tilde{\mathbf{v}}} &= \ 0 \hspace{1cm} \ \, \text{in}\ \tilde{B}_{\varrho} \cap \Omega, \\
 \hspace{1.3cm} \mathrm{div} (\tilde{\mathbf{v}}) &= \ 0  \hspace{1.05cm} \ \text{in}\ \tilde{B}_{\varrho} \cap \Omega, \\
 \hspace{1.5cm} \tilde{\mathbf{v}} & = \ \mathbf{v} \hspace{1.1cm} \text{on}\ \partial (\tilde{B}_{\varrho} \cap \Omega).
 \end{cases}
\end{align*}
We remark here that in the new coordinate system, when no confusion arises, we shall omit the dependence on the center $O$ as follows: $\tilde{B}_{\varrho}:= \tilde{B}_{\varrho}(O)$ and $\tilde{B}_{\varrho}^+:= \tilde{B}_{\varrho} \cap \{\tilde{x}_d > 0\}$. Having arrived at this stage, we observe that in order to establish an $L^\infty$-gradient bound for solutions near the boundary $\partial\Omega$ under assumption~\ref{ass:Reif}, we restrict ourselves to the model case in a half-ball $\tilde{B}_\varrho^+ \subset \Omega$ (we remark that the origin $O$ is in a neighborhood of $x_0 \in \partial\Omega$). Thus, we turn our attention to weak solution pair $(\mathbf{w}, \mathrm{\pi}_{\mathbf{w}})$ of the limiting problem in a smaller domain with the flat boundary
\begin{align*}
\begin{cases}
-\mathrm{div}(\overline{\mathbf{A}}_{\tilde{B}_{\varrho}^+}(\nabla \mathbf{w})) + \nabla \mathrm{\pi}_{\mathbf{w}} &= \ 0 \hspace{.1cm} \ \, \text{in}\  \tilde{B}_{\varrho}^+, \\
 \hspace{1.3cm} \mathrm{div} (\mathbf{w}) &= \ 0  \hspace{0.15cm} \ \text{in}\ \tilde{B}_{\varrho}^+, \\
% \hspace{1.5cm} \mathbf{w} & = \ \tilde{\mathbf{v}} \hspace{0.25cm} \text{on}\  \tilde{B}_{\varrho} \cap \{\tilde{x}_d=0\}.
 \hspace{1.7cm} \mathbf{w} & = \ 0 \hspace{0.3cm} \text{on}\  \tilde{B}_{\varrho}^+ \cap \{\tilde{x}_d=0\}.
 \end{cases}
\end{align*}
On the flat boundary of $\Omega$, let $(\tilde{\mathbf{w}},\pi_{\tilde{\mathbf{w}}})$ be zero extension of $(\mathbf{w},\pi_{\mathbf{w}})$ from $\tilde{B}_\varrho^+$ to $\tilde{B}_\varrho \cap \Omega$. Now, it is of interest to derive the comparison result between two pairs $(\tilde{\mathbf{v}},\mathrm{\pi}_{\tilde{\mathbf{v}}})$ and $(\tilde{\mathbf{w}},\mathrm{\pi}_{\tilde{\mathbf{w}}})$. As a last step of boundary comparison procedures, we will additionally establish $L^\infty$-bound for the gradient of $(\tilde{\mathbf{w}},\mathrm{\pi}_{\tilde{\mathbf{w}}})$. 

\begin{figure}[H]
\centering
\includegraphics[width=\linewidth]{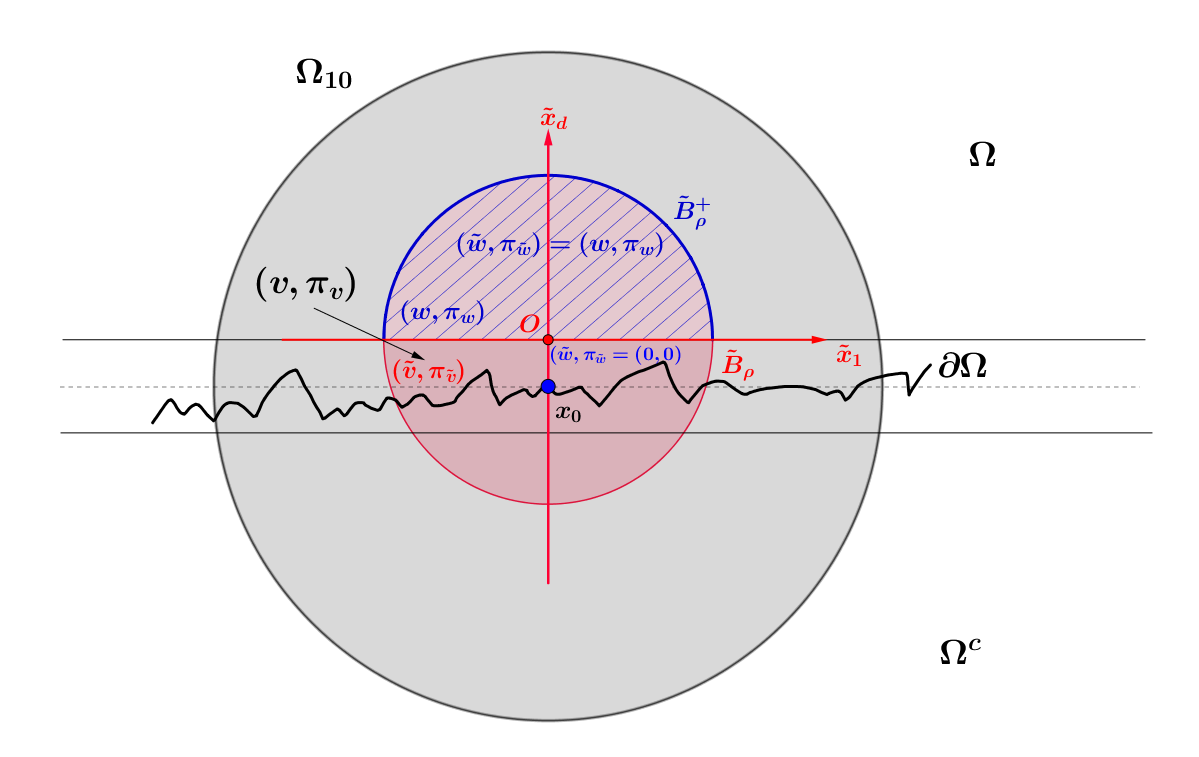}
\caption{Illustration of the domain near the boundary of an $(\delta,R_0)$-Reifenberg flat domain $\Omega$, for which the solution pair $(\tilde{\mathbf{w}},\pi_{\tilde{\mathbf{w}}})$ is defined.}
\label{fig:sols}
\end{figure}

In Figure~\ref{fig:sols}, we illustrate how the comparison scheme is performed on the boundary of $\Omega$ under Reifenberg flatness condition. Further, Figure~\ref{fig:sols} also describes how each solution pair from the discussion above is defined in the particular domain. We record our observations in a lemma, the boundary version of interior comparison estimates in Section~\ref{sec:interior}. As we shall see below, Lemma~\ref{lem:boundary} summarizes and derives comparison results near the boundary of $\Omega$. 

%The next step of comparison scheme specifies estimates near to the boundary of $\Omega$. At this point, the structural assumptions~\eqref{cond:Reif} and~\eqref{BMO}, i.e. $(\Omega,\mathbf{A}) \in \mathcal{H}(\delta,R_0)$ plays a decisive role. Our result is the following
\begin{lemma}\label{lem:boundary}
Let $(\mathbf{u},\mathrm{\pi}) \in W^{1,p}_{\mathbf{g},\mathrm{div}}(\Omega;\mathbb{R}^d) \times L^{p'}_{\mathrm{int}}(\Omega)$ be a weak solution pair to system~\eqref{eq:Stokes} with given data $\mathbf{f} \in L^p(\Omega;\mathbb{R}^{d^2})$ and $\mathbf{g} \in W^{1,p}(\Omega;\mathbb{R}^{d})$. Assume further that $(\Omega,\mathbf{A}) \in \mathcal{H}(\delta,R_0)$, for some $\delta \in (0,1/81)$ and $R_0>0$. Then, for $x_0 \in \partial \Omega$ and $R \in (0,R_0/9)$, one can find $(\tilde{\mathbf{w}},\mathrm{\pi}_{\tilde{\mathbf{w}}}) \in W^{1,p}(\Omega_{1};\mathbb{R}^d) \times L^{p'}(\Omega_{1})$ such that the following estimates hold:
\begin{align}\label{est-boundary-1}
\fint_{\Omega_{1}}|\nabla \mathbf{u} - \nabla \tilde{\mathbf{w}}|^pdx + \fint_{\Omega_{1}}|\mathrm{\pi} - \mathrm{\pi}_{\tilde{\mathbf{w}}}|^{p'} dx & \le C\delta^{\kappa}\fint_{\Omega_{10}} \left(\sigma^2 + |\nabla \mathbf{u}|^2\right)^{\frac{p}{2}}dx \notag \\
& \qquad  + C  \fint_{\Omega_{10}} (\sigma^p + |\mathbf{f}|^p + |\nabla \mathbf{g}|^p)dx,
\end{align}
and 
\begin{align}\label{est-boundary-2}
\|\nabla \tilde{\mathbf{w}}\|_{L^{\infty}(\Omega_{1};\mathbb{R}^{d^2})}^p + \|\mathrm{\pi}_{\tilde{\mathbf{w}}}\|_{L^{\infty}(\Omega_{1})}^{p'} & \le C\fint_{\Omega_{10}} \left(\sigma^2 + |\nabla \mathbf{u}|^2\right)^{\frac{p}{2}}dx \notag \\
& \hspace{1cm}  + C \fint_{\Omega_{10}} (\sigma^p + |\mathbf{f}|^p + |\nabla \mathbf{g}|^p)dx,
\end{align}
where $\kappa$ is defined as in~\eqref{def:kappa}.
\end{lemma}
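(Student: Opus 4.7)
The plan is to mirror the interior argument from Lemmas~\ref{lem:comp-1}--\ref{lem:comp-2}, but insert an extra coordinate-change step that exploits the Reifenberg flatness to flatten the boundary. Concretely, I would construct $(\tilde{\mathbf{w}},\pi_{\tilde{\mathbf{w}}})$ via a three-stage chain $(\mathbf{u},\pi)\rightsquigarrow(\mathbf{v},\pi_{\mathbf{v}})\rightsquigarrow(\tilde{\mathbf{v}},\pi_{\tilde{\mathbf{v}}})\rightsquigarrow(\tilde{\mathbf{w}},\pi_{\tilde{\mathbf{w}}})$, then sum the three comparison estimates with the triangle inequality.

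\textbf{Stage 1 (inhomogeneous-to-homogeneous on $\Omega_{10}$).} Let $(\mathbf{v},\pi_{\mathbf{v}})\in W^{1,p}_{\mathbf{u}-\mathbf{g},\mathrm{div}}(\Omega_{10};\mathbb{R}^d)\times L^{p'}_{\mathrm{int}}(\Omega_{10})$ solve the corresponding homogeneous problem on $\Omega_{10}$. Since $\Omega$ is Reifenberg flat, $\Omega_{10}$ is a John domain (cf.\ the remark after Lemma~\ref{lem:div}), so Lemma~\ref{lem:div} applies verbatim. Testing the difference of the two weak formulations with $\varphi=\mathbf{u}-\mathbf{g}-\overline{\mathbf{v}}\in W^{1,p}_{0,\mathrm{div}}(\Omega_{10};\mathbb{R}^d)$, where $\overline{\mathbf{v}}$ is the extension by $\mathbf{u}-\mathbf{g}$ outside $\Omega_{10}$, and then repeating the Young/absorption argument that produced~\eqref{est-5a}, one obtains the boundary counterpart of~\eqref{est-comp-1}. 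The associated pressure comparison follows by solving the divergence equation via Lemma~\ref{lem:div} exactly as in the proof of~\eqref{est:global-2}.

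\textbf{Stage 2 (coefficient freezing).} Apply~\ref{ass:Reif} at $x_0$ with scale $\varrho=9R<R_0$ to rotate into new coordinates in which $\tilde B_\varrho^+\subset\tilde B_\varrho\cap\Omega\subset\tilde B_\varrho\cap\{\tilde x_d>-2\delta\varrho/(1-\delta)\}$. Define $(\tilde{\mathbf{v}},\pi_{\tilde{\mathbf{v}}})$ as the unique solution on $\tilde B_\varrho\cap\Omega$ with boundary data $\mathbf{v}$ for the system where $\mathbf{A}$ is replaced by the average $\overline{\mathbf{A}}_{\tilde B_\varrho\cap\Omega}$. Testing the difference equation with $\mathbf{v}-\tilde{\mathbf{v}}$ and combining~\eqref{BMO} with the Gehring-type self-improvement~\eqref{RH-v} of $\nabla\mathbf{v}$ on Reifenberg-flat pieces (available under $(\Omega,\mathbf{A})\in\mathcal{H}(\delta,R_0)$) produces the decay
\begin{align*}
\fint_{\tilde B_\varrho\cap\Omega}|\nabla \mathbf{v}-\nabla\tilde{\mathbf{v}}|^p+|\pi_{\mathbf{v}}-\pi_{\tilde{\mathbf{v}}}|^{p'}\,dx \le C\delta^\kappa \fint_{\Omega_{10}}(\sigma^2+|\nabla\mathbf{u}|^2)^{p/2}\,dx,
\end{align*}
with the same exponent $\kappa$ from~\eqref{def:kappa}, after absorbing $\nabla\mathbf{v}$ by $\nabla\mathbf{u}$ using Stage~1.

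\textbf{Stage 3 (zero extension across the flat face).} Let $(\mathbf{w},\pi_{\mathbf{w}})$ solve the constant-coefficient homogeneous Stokes system with operator $\overline{\mathbf{A}}_{\tilde B_\varrho^+}$ on the half-ball $\tilde B_\varrho^+$ with $\mathbf{w}=\tilde{\mathbf{v}}$ on $\partial\tilde B_\varrho^+\cap\{\tilde x_d>0\}$ and $\mathbf{w}=0$ on the flat face $\tilde B_\varrho\cap\{\tilde x_d=0\}$. Extend $(\mathbf{w},\pi_{\mathbf{w}})$ by zero to the thin strip $(\tilde B_\varrho\cap\Omega)\setminus\tilde B_\varrho^+$ to define $(\tilde{\mathbf{w}},\pi_{\tilde{\mathbf{w}}})$. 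The $L^\infty$-bound~\eqref{est-boundary-2} on $\Omega_1\subset\tilde B_{\varrho/2}^+$ (valid whenever $\delta<1/81$ so that $\Omega_1$ sits safely inside the half-ball) follows from the classical flat-boundary regularity for constant-coefficient Stokes systems in~\cite{GM82}, chained through Stages~1--2 and Lemma~\ref{lem:global} to replace energy of $\mathbf{w}$ by that of $\mathbf{u}$.

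The main obstacle, and the only genuinely new ingredient, is the comparison $\nabla\tilde{\mathbf{v}}\approx\nabla\tilde{\mathbf{w}}$ on the Reifenberg strip. Testing the weak formulation for $\tilde{\mathbf{v}}-\tilde{\mathbf{w}}$ (divergence free and vanishing on $\partial(\tilde B_\varrho\cap\Omega)$ since the boundary data match once $\tilde{\mathbf{w}}$ is extended by zero) reduces the control to bounding $\int_{(\tilde B_\varrho\cap\Omega)\setminus\tilde B_\varrho^+}|\nabla\tilde{\mathbf{v}}|^p\,dx$. The strip has Lebesgue measure at most $C\delta\varrho^d$ by~\eqref{cond:Reif}, and a reverse-Hölder inequality of Gehring type for $\nabla\tilde{\mathbf{v}}$ on half-balls converts this measure bound into a factor $\delta^{1-p/\theta}$ which, upon adjusting $\kappa$ in~\eqref{def:kappa} if necessary, is absorbed into the decay rate $\delta^\kappa$. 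Passing to pressure via Lemma~\ref{lem:div} and summing the three comparison estimates by the triangle inequality then yields~\eqref{est-boundary-1}, while~\eqref{est-boundary-2} is the content of Stage~3 combined with the chained energy bounds.
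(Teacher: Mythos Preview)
Your three-stage chain $(\mathbf{u},\pi)\rightsquigarrow(\mathbf{v},\pi_{\mathbf{v}})\rightsquigarrow(\tilde{\mathbf{v}},\pi_{\tilde{\mathbf{v}}})\rightsquigarrow(\tilde{\mathbf{w}},\pi_{\tilde{\mathbf{w}}})$ is exactly the route the paper takes, and Stages~1--2 are handled correctly. Two concrete points in Stage~3 need repair, however.

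First, the inclusion $\Omega_1\subset\tilde B_{\varrho/2}^+$ is false: the center $x_0\in\partial\Omega$ lies at height $\tilde x_d=-\delta\varrho/(1-\delta)<0$ in the new coordinates, so $\Omega_1=B_R(x_0)\cap\Omega$ contains points in the strip $\{\tilde x_d\le 0\}$ and cannot sit inside any upper half-ball. The paper instead establishes $\Omega_1\subset\tilde B_{\varrho/8}\cap\Omega$ (taking $\varrho=9R(1-\delta)$ so the arithmetic closes) and then uses that the zero-extended $(\tilde{\mathbf{w}},\pi_{\tilde{\mathbf{w}}})$ is identically zero on the strip portion and bounded by~\cite{GM82} on the half-ball portion; this yields~\eqref{est-boundary-2} on all of $\Omega_1$. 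Second, your claim that $\tilde{\mathbf{v}}-\tilde{\mathbf{w}}$ vanishes on $\partial(\tilde B_\varrho\cap\Omega)$ is incorrect on the curved arc $\partial\tilde B_\varrho\cap\Omega\cap\{\tilde x_d\le 0\}$: there $\tilde{\mathbf{w}}=0$ by the zero extension, but $\tilde{\mathbf{v}}=\mathbf{v}$ need not vanish. Moreover, $\tilde{\mathbf{w}}$ does not solve any PDE on the full set $\tilde B_\varrho\cap\Omega$ (only on $\tilde B_\varrho^+$), so one cannot simply test a difference equation there. The paper is admittedly terse at this step (``by the same technique as the previous proofs''), but the standard argument works on nested smaller sets $\tilde B_{\varrho/8}\cap\Omega\subset\tilde B_{\varrho/4}\cap\Omega$, exploits that $\tilde{\mathbf{v}}=\mathbf{v}=0$ on $\partial\Omega\cap\tilde B_\varrho$, and controls the strip contribution exactly via the measure bound $|(\tilde B_\varrho\cap\Omega)\setminus\tilde B_\varrho^+|\le C\delta\varrho^d$ plus reverse H\"older that you already identified. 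So your diagnosis of the ``only genuinely new ingredient'' is right; the execution just needs the correct inclusions and a comparison localized to the half-ball rather than a global test on $\tilde B_\varrho\cap\Omega$.
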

\begin{proof}
The proof follows along the same path as of Lemma~\ref{lem:comp-1}. Let us first consider $(\mathbf{v},\mathrm{\pi}_{\mathbf{v}}) \in W^{1,p}_{\mathbf{u}-\mathbf{g},\mathrm{div}}(\Omega_{10};\mathbb{R}^d) \times L^{p'}_{\mathrm{int}}(\Omega_{10})$ the unique solution pair of the following Cauchy-Dirichlet problem
\begin{align} \notag %\label{eq:hom-v}
\begin{cases}
-\mathrm{div}(\mathbf{A}(x,\nabla \mathbf{v})) + \nabla \mathrm{\pi}_{\mathbf{v}} &= \ 0 \hspace{1.25cm} \ \, \text{in}\ \Omega_{10}, \\
 \hspace{1.4cm} \mathrm{div} (\mathbf{v}) &= \ 0  \hspace{1.3cm} \ \text{in}\ \Omega_{10}, \\
 \hspace{1.8cm} \mathbf{v} & = \ \mathbf{u}-\mathbf{g} \hspace{0.7cm} \text{on}\ \partial \Omega_{10}.
 \end{cases}
\end{align}
Applying a similar argument as in the previous estimates, we conclude
\begin{align}\label{est-100}
 \fint_{\Omega_{10}}|\nabla \mathbf{u} - \nabla \mathbf{v}|^p + |\mathrm{\pi} - \mathrm{\pi}_{\mathbf{v}}|^{p'} dx  & \le \varepsilon \fint_{\Omega_{10}} \left(\sigma^2 + |\nabla \mathbf{u}|^2\right)^{\frac{p}{2}} dx \notag \\
& \hspace{1cm} + C \varepsilon^{-\overline{p}} \fint_{\Omega_{10}} \left(\sigma^p + |\mathbf{f}|^p + |\nabla \mathbf{g}|^p\right) dx,
\end{align}
for every $\varepsilon \in (0,1)$. At this stage, we set $\varrho:= 9R(1-\delta)>0$, and due to the assumption~\eqref{cond:Reif}, there exists a new coordinate system $\{\tilde{x}_1, \tilde{x}_2, ..., \tilde{x}_d\}$ such that its origin $O$ belonging to $\Omega$, $x_0 = - \varrho\delta/(1-\delta)\tilde{x}_d$ and
\begin{align}\notag %\label{cond:B}
\tilde{B}_{\varrho}^+ \subset \tilde{B}_{\varrho} \cap \Omega \subset \tilde{B}_{\varrho} \cap \{\tilde{x}_d > -2 \varrho\delta/(1-\delta)\}.
\end{align}
Since $\varrho= 9R(1-\delta)$ for $\delta \in (0,1/81)$, it is easy to check that
$$ \varrho\delta/(1-\delta) + R < \frac{\varrho}{8}, \mbox{ and } \varrho\delta/(1-\delta) + \varrho < 10R,$$
and this implies the following relations
\begin{align}\label{cond:B2}
\Omega_1= B_{R}(x_0)\cap \Omega \subset \tilde{B}_{\varrho/8}\cap \Omega \subset \tilde{B}_{\varrho/4}\cap \Omega \subset \tilde{B}_{\varrho}\cap \Omega \subset \Omega_{10}= B_{10R}(x_0) \cap \Omega.
\end{align}

Next, let $(\tilde{\mathbf{v}},\mathrm{\pi}_{\tilde{\mathbf{v}}})$ be the unique pair of weak solutions to the following homogeneous boundary-value problem
\begin{align}\notag %\label{eq:B3}
\begin{cases}
-\mathrm{div}(\overline{\mathbf{A}}_{\tilde{B}_{\varrho}}(\nabla \tilde{\mathbf{v}})) + \nabla \mathrm{\pi}_{\tilde{\mathbf{v}}} &= \ 0 \hspace{1cm} \ \, \text{in}\ \tilde{B}_{\varrho} \cap \Omega, \\
 \hspace{1.3cm} \mathrm{div} (\tilde{\mathbf{v}}) &= \ 0  \hspace{1.05cm} \ \text{in}\ \tilde{B}_{\varrho} \cap \Omega, \\
 \hspace{1.5cm} \tilde{\mathbf{v}} & = \ \mathbf{v} \hspace{1.1cm} \text{on}\ \partial (\tilde{B}_{\varrho} \cap \Omega).
 \end{cases}
\end{align}
Note that it is convenient to extend $\mathbf{A}(\cdot,\nu)$ to be zero outside $\Omega$ and this guarantees that $\overline{\mathbf{A}}_{\tilde{B}_{\varrho}}(\nu)$ is still well-defined. A brief argument similar to the one in Lemma~\ref{lem:comp-2} leads us now to the estimate
\begin{align}\label{est_lem:v-w500a}
\fint_{\Omega \cap \tilde{B}_{\varrho/4}} |\nabla \mathbf{v} - \nabla \tilde{\mathbf{v}}|^p + |\mathrm{\pi}_{\mathbf{v}} - \mathrm{\pi}_{\tilde{\mathbf{v}}}|^{p'} dx & \le C \delta^{\kappa} \fint_{\Omega \cap \tilde{B}_{\varrho/2}} \left(\sigma^2 + |\nabla \mathbf{v}|^2\right)^{\frac{p}{2}} dx \notag \\
& \le C \delta^{\kappa} \left( \fint_{\Omega_{10}} \left(\sigma^2 + |\nabla \mathbf{u}|^2\right)^{\frac{p}{2}} dx + \fint_{\Omega_{10}} |\nabla \mathbf{u}-\nabla \mathbf{v}|^pdx\right).
\end{align}
%Here we use \eqref{cond:B2} to get the last inequality in~\eqref{est_lem:v-w500a}. 
Substituting~\eqref{est-100} into~\eqref{est_lem:v-w500a} with $\varepsilon=1/2$, we eventually obtain
\begin{align}\label{est_lem:v-w500}
\fint_{\Omega \cap \tilde{B}_{\varrho/4}} |\nabla \mathbf{v} - \nabla \tilde{\mathbf{v}}|^p + |\mathrm{\pi}_{\mathbf{v}} - \mathrm{\pi}_{\tilde{\mathbf{v}}}|^{p'} dx & \le C \delta^{\kappa}  \fint_{\Omega_{10}} \left(\sigma^2 + |\nabla \mathbf{u}|^2\right)^{\frac{p}{2}} dx \notag \\
& \qquad + C \delta^{\kappa} \fint_{\Omega_{10}} \left(\sigma^p + |\mathbf{f}|^p + |\nabla \mathbf{g}|^p\right) dx.
\end{align}
At this point, it is not possible to estimate the $L^{\infty}$-bound of $(\nabla \tilde{\mathbf{v}},\mathrm{\pi}_{\tilde{\mathbf{v}}})$ near the boundary of irregular domain $\Omega$. Under Reifenberg flat condition~\eqref{cond:Reif}, in the new coordinate system, it allows us to study  the limiting system with zero Dirichlet boundary data in $\tilde{B}_{\varrho}^+ \subset \Omega$. To be more precise, let us consider $(\mathbf{w}, \mathrm{\pi}_{\mathbf{w}})$ a weak solution to the following homogeneous Dirichlet problem
\begin{align}\label{eq:B4}
\begin{cases}
-\mathrm{div}(\overline{\mathbf{A}}_{\tilde{B}_{\varrho}^+}(\nabla \mathbf{w})) + \nabla \mathrm{\pi}_{\mathbf{w}} &= \ 0 \hspace{.1cm} \ \, \text{in}\  \tilde{B}_{\varrho}^+, \\
 \hspace{1.3cm} \mathrm{div} (\mathbf{w}) &= \ 0  \hspace{0.15cm} \ \text{in}\ \tilde{B}_{\varrho}^+, \\
% \hspace{1.5cm} \mathbf{w} & = \ \tilde{\mathbf{v}} \hspace{0.25cm} \text{on}\  \tilde{B}_{\varrho} \cap \{\tilde{x}_d=0\}.
 \hspace{1.7cm} \mathbf{w} & = \ 0 \hspace{0.3cm} \text{on}\  \tilde{B}_{\varrho} \cap \{\tilde{x}_d=0\}.
 \end{cases}
\end{align}
We now denote by $(\tilde{\mathbf{w}},\mathrm{\pi}_{\tilde{\mathbf{w}}})$ the zero extension of $(\mathbf{w}, \mathrm{\pi}_{\mathbf{w}})$ from $\tilde{B}_{\varrho}^+$ to $\tilde{B}_{\varrho} \cap \Omega$. An interesting point here is that the $L^{\infty}$-norm estimate of $(\nabla \tilde{\mathbf{w}},\mathrm{\pi}_{\tilde{\mathbf{w}}})$ for the limiting problem~\eqref{eq:B4} is well-known, see for instance~\cite{GM82}. In particular, we will obtain
\begin{align}
\|\nabla \tilde{\mathbf{w}}\|^p_{L^{\infty}(\tilde{B}_{\varrho/8}\cap \Omega;\mathbb{R}^{d^2})} + \|\mathrm{\pi}_{\tilde{\mathbf{w}}}\|^{p'}_{L^{\infty}(\tilde{B}_{\varrho/8}\cap \Omega)} &\le C \fint_{\tilde{B}_{\varrho/4}\cap \Omega} \left(\sigma^2 + |\nabla \mathbf{v}|^2\right)^{\frac{p}{2}} dx \notag \\
& \le C \fint_{\tilde{B}_{\varrho/4}\cap \Omega} \left(\sigma^2+|\nabla \mathbf{u}|^2\right)^{\frac{p}{2}} + |\nabla \mathbf{u} - \nabla \mathbf{v}|^p dx, \notag
\end{align}
and by~\eqref{est-100}, this result immediately leads to the following estimate
\begin{align}\label{est_lem:B5a}
\|\nabla \tilde{\mathbf{w}}\|^p_{L^{\infty}(\tilde{B}_{\varrho/8}\cap \Omega;\mathbb{R}^{d^2})} + \|\mathrm{\pi}_{\tilde{\mathbf{w}}}\|^{p'}_{L^{\infty}(\tilde{B}_{\varrho/8}\cap \Omega)} & \le C \fint_{\Omega_{10}} \left(\sigma^2+|\nabla \mathbf{u}|^2\right)^{\frac{p}{2}} dx \notag \\
& \qquad + C\fint_{\Omega_{10}} \left(\sigma^p + |\mathbf{f}|^p + |\nabla \mathbf{g}|^p\right) dx. 
\end{align}
Then, due to the first relation~\eqref{cond:B2}, it enables us to derive
\begin{align*}
& \|\nabla \tilde{\mathbf{w}}\|^p_{L^{\infty}(\Omega_{1};\mathbb{R}^{d^2})} + \|\mathrm{\pi}_{\tilde{\mathbf{w}}}\|^{p'}_{L^{\infty}(\Omega_{1})} \le \|\nabla \tilde{\mathbf{w}}\|^p_{L^{\infty}(\tilde{B}_{\varrho/8}\cap \Omega;\mathbb{R}^{d^2})} + \|\mathrm{\pi}_{\tilde{\mathbf{w}}}\|^{p'}_{L^{\infty}(\tilde{B}_{\varrho/8}\cap \Omega)},
\end{align*}
which ensures the estimate in~\eqref{est-boundary-2} from~\eqref{est_lem:B5a}. On the other hand, by the same technique as the previous proofs, we also obtain the following comparison estimate 
\begin{align} \label{est_lem:B5b}
\fint_{\tilde{B}_{\varrho/8}\cap \Omega} |\nabla \tilde{\mathbf{v}} - \nabla \tilde{\mathbf{w}}|^p + |\mathrm{\pi}_{\tilde{\mathbf{v}}} - \mathrm{\pi}_{\tilde{\mathbf{w}}}|^{p'} dx & \le C\delta^{\kappa} \fint_{\tilde{B}_{\varrho/4}\cap \Omega} \left(\sigma^2 + |\nabla \mathbf{v}|^2\right)^{\frac{p}{2}} dx \notag \\
& \le C \delta^{\kappa} \fint_{\Omega_{10}} \left(\sigma^2+|\nabla \mathbf{u}|^2\right)^{\frac{p}{2}} dx  + C \delta^{\kappa}\fint_{\Omega_{10}} \left(\sigma^p + |\mathbf{f}|^p + |\nabla \mathbf{g}|^p\right) dx. 
\end{align} 
Finally, thanks to~\eqref{cond:B2}, we infer that
\begin{align} %\label{est_lem:final}
 \fint_{\Omega_{1}} |\nabla \mathbf{u} - \nabla \tilde{\mathbf{w}}|^p +  |\mathrm{\pi} - \mathrm{\pi}_{\tilde{\mathbf{w}}}|^{p'} dx &\le C \fint_{\Omega \cap \tilde{B}_{\varrho/8}} |\nabla \mathbf{u} - \nabla \tilde{\mathbf{w}}|^p + |\mathrm{\pi} - \mathrm{\pi}_{\tilde{\mathbf{w}}}|^{p'} dx \notag \\ 
&  \le C \fint_{\Omega \cap \tilde{B}_{\varrho/8}} |\nabla \mathbf{u} - \nabla \mathbf{v}|^p + |\mathrm{\pi} - \mathrm{\pi}_{\mathbf{v}}|^{p'} dx \notag \\
& \hspace{1cm}  + C \fint_{\Omega \cap \tilde{B}_{\varrho/8}} |\nabla \mathbf{v} - \nabla \tilde{\mathbf{v}}|^p + |\mathrm{\pi}_{\mathbf{v}} - \mathrm{\pi}_{\tilde{\mathbf{v}}}|^{p'} dx \notag \\    
& \hspace{2cm} + C \fint_{\Omega \cap \tilde{B}_{\varrho/8}} |\nabla \tilde{\mathbf{v}} - \nabla \tilde{\mathbf{w}}|^p + |\mathrm{\pi}_{\tilde{\mathbf{v}}} - \mathrm{\pi}_{\tilde{\mathbf{w}}}|^{p'} dx, \notag
\end{align}
And the desired result~\eqref{est-boundary-1} follows from combining~\eqref{est-100} with~\eqref{est_lem:v-w500} and~\eqref{est_lem:B5b}.
\end{proof}

\section{Global regularity estimates in generalized spaces}
\label{sec:proof}

Once having technical comparison results at hand, one may employ them to transfer the level-set estimate via WFMDs, as stated in Theorem~\ref{theo:dist}. Further, this section is also devoted to the proofs of our desired results: Theorem~\ref{theo:main} and Theorem~\ref{theo:improv}, which concern generalized weighted Lorentz and $\psi$-generalized Morrey regularity.

\subsection{Level-set inequality on FMDs in the Muckenhoupt weighted setting}

The idea to prove level-set inequality on FMDs dates back to~\cite{AM2007}, who directly used a version of Calder\'on-Zygmund coverings arguments. In this study, one can apply Calder\'on-Zygmund-Krylov-Safanov type covering lemma to suitable level sets of the fractional maximal functions. For the convenience of the reader, we only restate it in a way suitable to our needs, and we refer to~\cite{CP1998,KS1980} for the complete proof.
\begin{lemma}[Covering lemma]
\label{lem:VCL}
Let $\omega \in \mathcal{A}_\infty$ be a Muckenhoupt weight and $\mathcal{S}_1 \subset \mathcal{S}_2$ be two measurable subsets of $(\delta_0,R_0)$-Reifenberg flat domain $\Omega$ for some $\delta_0 \in (0,1)$ and $R_0>0$. Suppose that $N$ balls $\{B^{j}\}_{j=1}^N$ with radius $R_0$ covers $\Omega$. Assume moreover that there is $\varepsilon \in (0,1)$ such that:
\begin{itemize}
\item[(i)] $\omega(\mathcal{S}_1) \le \varepsilon \omega(B^{j})$ for all $j=1,2,...,N$;
\item[(ii)] for every $x_0 \in \Omega$ and $\varrho \in (0,R_0]$, if $\omega(B_\varrho(x_0)\cap\mathcal{S}_1) \ge \varepsilon\omega(B_\varrho(x_0))$ then $B_\varrho(x_0) \cap \Omega \subset \mathcal{S}_2$.
\end{itemize}
Then, there exists a constant $C = C(d,[\omega]_{\mathcal{A}_\infty})$ such that $\omega(\mathcal{S}_1) \le C\varepsilon \omega(\mathcal{S}_2)$.
\end{lemma}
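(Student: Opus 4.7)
The plan is to run a Vitali-type covering argument in the weighted setting, combining a stopping-time construction on $\mathcal{S}_1$ with the doubling inequalities~\eqref{ineq-Muck} encoded by $[\omega]_{\mathcal{A}_\infty}$. Since $\omega \in \mathcal{A}_\infty$ is doubling, the Lebesgue differentiation theorem applies, so for $\omega$-almost every $x \in \mathcal{S}_1$ the density ratio $\omega(B_r(x) \cap \mathcal{S}_1)/\omega(B_r(x))$ tends to $1$ as $r \to 0^+$. For each such $x$ I would define the stopping-time radius
$$r_x := \sup\Bigl\{r \in (0, R_0] : \omega(B_r(x) \cap \mathcal{S}_1) > \varepsilon \, \omega(B_r(x)) \Bigr\},$$
and use hypothesis~(i) together with doubling (comparing $B_{R_0}(x)$ to the reference ball $B^{j}$ containing $x$) to show $r_x < R_0$, so the supremum is attained strictly inside the admissible range. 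At the stopping radius this gives $\omega(B_{r_x}(x) \cap \mathcal{S}_1) \ge \varepsilon \, \omega(B_{r_x}(x))$, while for $r \in (r_x, R_0]$ the reverse inequality holds.

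Next, I apply the classical Vitali covering lemma to the family $\{B_{r_x}(x)\}_{x}$, extracting a countable disjoint sub-family $\{B_{r_i}(x_i)\}_i$ (with $r_i := r_{x_i}$) such that $\mathcal{S}_1 \subset \bigcup_i B_{5 r_i}(x_i)$ up to an $\omega$-null set. Each ball $B_{r_i}(x_i)$ is built so that $\omega(B_{r_i}(x_i) \cap \mathcal{S}_1) \ge \varepsilon \, \omega(B_{r_i}(x_i))$ with $r_i \le R_0$; this is precisely the threshold needed to trigger hypothesis~(ii), which then yields $B_{r_i}(x_i) \cap \Omega \subset \mathcal{S}_2$. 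Since the $B_{r_i}(x_i)$ are pairwise disjoint and each $x_i \in \Omega$, summation gives $\sum_i \omega(B_{r_i}(x_i)) \le \omega(\mathcal{S}_2)$.

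To close the argument I transfer the estimate from the large balls $B_{5 r_i}(x_i)$ back to $B_{r_i}(x_i)$: by maximality of $r_i$ (applied at a slightly larger radius, adjusted by doubling if $5 r_i > R_0$) one has $\omega(B_{5 r_i}(x_i) \cap \mathcal{S}_1) \le \varepsilon \, \omega(B_{5 r_i}(x_i))$, while the $\mathcal{A}_\infty$-doubling bound~\eqref{ineq-Muck} gives $\omega(B_{5 r_i}(x_i)) \le C \, \omega(B_{r_i}(x_i))$ with $C = C(d,[\omega]_{\mathcal{A}_\infty})$. Chaining these three facts yields
$$\omega(\mathcal{S}_1) \le \sum_i \omega(B_{5 r_i}(x_i) \cap \mathcal{S}_1) \le \varepsilon \sum_i \omega(B_{5 r_i}(x_i)) \le C \, \varepsilon \sum_i \omega(B_{r_i}(x_i)) \le C \, \varepsilon \, \omega(\mathcal{S}_2),$$
which is the claim.

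The main technical obstacle I anticipate is Step~1: guaranteeing that the stopping-time radius lies strictly below $R_0$ (so hypothesis~(ii) is actually applicable), and that the dilated ball $B_{5 r_i}(x_i)$ remains within the range where the maximality of $r_i$ gives a reverse inequality. Both issues are handled by making $\varepsilon$ sufficiently small (quantified by $[\omega]_{\mathcal{A}_\infty}$) and, if needed, by reducing the Vitali dilation factor from $5$ or invoking a Besicovitch-type covering with controlled overlap; the $\mathcal{A}_\infty$-doubling property absorbs the resulting constants into $C = C(d,[\omega]_{\mathcal{A}_\infty})$.
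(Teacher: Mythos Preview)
The paper does not actually prove this lemma: immediately after the statement it writes ``we only restate it in a way suitable to our needs, and we refer to~\cite{CP1998,KS1980} for the complete proof.'' So there is no in-paper argument to compare against; your Vitali/stopping-time scheme is precisely the standard route taken in those references and in the Byun--Wang literature, and the overall strategy is sound.

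There is one genuine gap in your write-up. After invoking hypothesis~(ii) you obtain $B_{r_i}(x_i)\cap\Omega\subset\mathcal{S}_2$, \emph{not} $B_{r_i}(x_i)\subset\mathcal{S}_2$; the balls may protrude outside $\Omega$. Hence the disjointness only yields $\sum_i \omega\bigl(B_{r_i}(x_i)\cap\Omega\bigr)\le \omega(\mathcal{S}_2)$, and your chain of inequalities needs $\omega\bigl(B_{r_i}(x_i)\bigr)\le C\,\omega\bigl(B_{r_i}(x_i)\cap\Omega\bigr)$. This is exactly where the Reifenberg-flat hypothesis enters: for $x_i\in\Omega$ and $r_i\le R_0$ one has the measure-density estimate $|B_{r_i}(x_i)\cap\Omega|\ge c(d)\,|B_{r_i}(x_i)|$ (see Remark~\ref{rem:H1}), and then the left-hand inequality in~\eqref{ineq-Muck} converts this into the required weighted comparison with a constant depending on $d$ and $[\omega]_{\mathcal{A}_\infty}$. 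Without this step the argument does not close, and the final constant would not carry the stated dependence. The remaining issues you flag (forcing $r_x<R_0$, and controlling the $5r_i$ dilation) are handled, as you indicate, by absorbing a fixed doubling constant into $C$ and, if necessary, running the stopping time against the threshold $\varepsilon/C$ rather than $\varepsilon$.
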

\begin{lemma}\label{lem:D1}
Let $\{B^{j}\}_{j=1}^N$ be $N$ balls whose radius $R_0$ and centers belonging to $\partial\Omega$. Suppose that $\{B^{j}\}_{j=1}^N$ covers $\Omega$. Then for every $\mathfrak{a}>0$ and $\varepsilon>0$, there exists $\mathfrak{b}=\mathfrak{b}(\mathfrak{a},\varepsilon,\mathtt{data}_1)>0$ such that $\omega(\mathbb{S}(\mathfrak{a},\mathfrak{b})) \le \varepsilon \omega(B^j)$ for all $j= 1,2,...,N$, where $\mathbb{S}(\mathfrak{a},\mathfrak{b})$ is defined by
\begin{align}\label{def:S1}
\mathbb{S}(\mathfrak{a},\mathfrak{b}) := \left\{\mathcal{M}_{\alpha}\mathbb{U} > \mathfrak{a} \lambda; \ \mathcal{M}_{\alpha}\mathbb{F}_{\sigma} \le \mathfrak{b}\lambda\right\}.
\end{align}
\end{lemma}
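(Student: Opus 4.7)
The plan is to prove the $\omega$-measure smallness of $\mathbb{S}(\mathfrak{a},\mathfrak{b})$ by first establishing that it has small Lebesgue measure, and then invoking the $\mathcal{A}_\infty$ bound~\eqref{ineq-Muck} to transfer this smallness to the weighted measure. The basic idea is that the constraint $\mathcal{M}_\alpha\mathbb{F}_\sigma\le\mathfrak{b}\lambda$ holding even at a single point of $\mathbb{S}(\mathfrak{a},\mathfrak{b})$ controls $\|\mathbb{F}_\sigma\|_{L^1(\Omega)}$ globally; through the a priori estimate of Lemma~\ref{lem:global}, this cascades to a bound on $\|\mathbb{U}\|_{L^1(\Omega)}$; finally, the weak-type inequality of Lemma~\ref{bound-M-beta} forces $\{\mathcal{M}_\alpha\mathbb{U}>\mathfrak{a}\lambda\}\supset\mathbb{S}(\mathfrak{a},\mathfrak{b})$ to have small Lebesgue measure.

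More concretely, after disposing of the trivial case $\mathbb{S}(\mathfrak{a},\mathfrak{b})=\emptyset$, I would fix $x_1\in\mathbb{S}(\mathfrak{a},\mathfrak{b})$ and test the definition of $\mathcal{M}_\alpha$ at the scale $\varrho\simeq\mathrm{diam}(\Omega)$ so that $\Omega\subset B_\varrho(x_1)$; this immediately yields $\|\mathbb{F}_\sigma\|_{L^1(\Omega)}\le C(\Omega,d,\alpha)\mathfrak{b}\lambda$. Raising~\eqref{est:global-1} to the appropriate powers gives $\|\mathbb{U}\|_{L^1(\Omega)}\le C(\mathfrak{b}\lambda+(\mathfrak{b}\lambda)^{p'/p})$, and Lemma~\ref{bound-M-beta} then produces
\begin{align*}
|\mathbb{S}(\mathfrak{a},\mathfrak{b})|\le|\{\mathcal{M}_\alpha\mathbb{U}>\mathfrak{a}\lambda\}|\le C\left(\frac{\mathfrak{b}}{\mathfrak{a}}+\frac{\mathfrak{b}^{p'/p}\lambda^{(p'-p)/p}}{\mathfrak{a}}\right)^{\frac{d}{d-\alpha}}.
\end{align*}
The final conversion to $\omega$-measure goes through the decomposition $\mathbb{S}(\mathfrak{a},\mathfrak{b})=\bigcup_k\bigl(\mathbb{S}(\mathfrak{a},\mathfrak{b})\cap B^k\bigr)$: applying~\eqref{ineq-Muck} with $E=\mathbb{S}(\mathfrak{a},\mathfrak{b})\cap B^k\subset B^k$ on each piece, and comparing all the $\omega(B^k)$ to any single $\omega(B^j)$ via the doubling property of $\omega\in\mathcal{A}_\infty$ up to a constant depending on $N$ and $[\omega]_{\mathcal{A}_\infty}$, we arrive at $\omega(\mathbb{S}(\mathfrak{a},\mathfrak{b}))\le C(N,[\omega]_{\mathcal{A}_\infty})(|\mathbb{S}(\mathfrak{a},\mathfrak{b})|/|B^j|)^{\iota_2}\omega(B^j)$. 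Selecting $\mathfrak{b}$ small enough then makes the right-hand side at most $\varepsilon\omega(B^j)$ uniformly in $j$.

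The main obstacle will be the $\lambda$-sensitive term $\mathfrak{b}^{p'/p}\lambda^{(p'-p)/p}/\mathfrak{a}$ that enters through the nonlinear a priori estimate for the pressure: whenever $p\ne 2$ this term genuinely depends on $\lambda$, while the conclusion requires $\mathfrak{b}$ to be chosen independently of $\lambda$. I anticipate dealing with this via a scaling reduction---rescaling $(\mathbf{u},\pi,\mathbf{f},\mathbf{g},\sigma)$ according to the natural $p$-homogeneity of the Stokes-type system in order to normalize $\lambda$ to a fixed reference value---so that the nonlinear contribution can be absorbed into the leading $\mathfrak{b}/\mathfrak{a}$ term and $\mathfrak{b}$ may be selected as a function only of $(\mathfrak{a},\varepsilon,\mathtt{data}_1)$, as required by the statement.
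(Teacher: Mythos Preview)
Your overall strategy matches the paper's: pick a point in $\mathbb{S}(\mathfrak{a},\mathfrak{b})$, test $\mathcal{M}_\alpha\mathbb{F}_\sigma$ at the diameter scale to bound $\|\mathbb{F}_\sigma\|_{L^1(\Omega)}$, pass through Lemma~\ref{lem:global} and the weak-type bound of Lemma~\ref{bound-M-beta}, and finally convert Lebesgue smallness into $\omega$-smallness via~\eqref{ineq-Muck}. Two points of divergence are worth noting.

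\emph{The $\omega$-conversion.} The paper does not split $\mathbb{S}(\mathfrak{a},\mathfrak{b})$ over the covering $\{B^k\}$. Instead it introduces a single ball $\mathcal{B}=B_{2D_0+R_0}(x_*)$ large enough to contain both $\Omega$ and every $B^j$, and then applies~\eqref{ineq-Muck} twice: once with $E=\mathbb{S}(\mathfrak{a},\mathfrak{b})\subset\mathcal{B}$ to get $\omega(\mathbb{S})\le C(|\mathbb{S}|/|\mathcal{B}|)^{\iota_2}\omega(\mathcal{B})$, and once with $E=B^j\subset\mathcal{B}$ (read in the other direction) to get $\omega(\mathcal{B})\le C(|\mathcal{B}|/|B^j|)^{\iota_1}\omega(B^j)$. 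This gives the uniform-in-$j$ bound directly, without invoking doubling or incurring any dependence on the covering number $N$; your route works but is less clean.

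\emph{The $\lambda$-dependent pressure term.} The paper writes in one stroke $\int_\Omega\mathbb{U}\,dx\le C\int_\Omega\mathbb{F}_\sigma\,dx$ as a consequence of Lemma~\ref{lem:global}, after which only the ratio $\mathfrak{b}/\mathfrak{a}$ survives and no $\lambda$ appears. Your worry about a term of the form $(\mathfrak{b}\lambda)^{p'/p}$ comes from raising the \emph{norm} inequality~\eqref{est:global-1} to the power $p'$; the paper effectively claims the inequality at the level of $L^1$-integrals of $\mathbb{U}$ and $\mathbb{F}_\sigma$, which is exactly the homogeneous statement your scaling reduction would produce. In other words, the paper absorbs the step you flag into a single citation of Lemma~\ref{lem:global}, whereas you propose to justify it explicitly via the natural $p$-homogeneity $(\mathbf{u},\pi,\mathbf{f},\mathbf{g},\sigma)\mapsto(t\mathbf{u},t^{p-1}\pi,t\mathbf{f},t\mathbf{g},t\sigma)$ under which both $\mathbb{U}$ and $\mathbb{F}_\sigma$ scale by $t^p$. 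Your explicit treatment is a useful clarification of a point the paper passes over.
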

\begin{proof}
Without loss of generality, we can reduce ourselves to the case where $\mathbb{S}(\mathfrak{a},\mathfrak{b})$ is a non-empty set. Then, there exists $x_* \in \Omega$ such that $\mathcal{M}_{\alpha}\mathbb{F}_{\sigma}(x_*) \le \mathfrak{b}\lambda$. Let $D_0 = \mathrm{diam}(\Omega)$ and $\mathcal{B} = B_{(2D_0+R_0)}(x_*)$, it is easy to check that
$$\mathbb{S}(\mathfrak{a},\mathfrak{b}) \subset \Omega \subset \mathcal{B} \quad \mbox{ and } \quad B^{j} \subset \mathcal{B}, \quad \mbox{ for all } j= 1,2,...,N.$$ 
Thanks to property~\eqref{ineq-Muck}, it yields that
\begin{align}\notag
\omega(\mathbb{S}(\mathfrak{a},\mathfrak{b})) \le C_2 \left(\frac{|\mathbb{S}(\mathfrak{a},\mathfrak{b})|}{|\mathcal{B}|}\right)^{\iota_2} \omega(\mathcal{B}), \quad \mbox{ and } \quad \omega(\mathcal{B}) \le C_1^{-1} \left(\frac{|\mathcal{B}|}{|B^j|}\right)^{\iota_1} \omega(B^j), 
\end{align}
for every $j= 1,2,...,N$. Moreover, note that $\displaystyle{\frac{|\mathcal{B}|}{|B^j|}} = \displaystyle{\frac{2D_0+R_0}{R_0}}$, and due to this fact, we readily infer
\begin{align}\label{est-002}
\omega(\mathbb{S}(\mathfrak{a},\mathfrak{b})) \le C_1^{-1} C_2 \left(\frac{2D_0}{R_0}+1\right)^{\iota_1} \left(\frac{|\mathbb{S}(\mathfrak{a},\mathfrak{b})|}{|\mathcal{B}|}\right)^{\iota_2}  \omega(B^j). 
\end{align}
In order to bound the ratio $\displaystyle{\frac{|\mathbb{S}(\mathfrak{a},\mathfrak{b})|}{|\mathcal{B}|}}$, we will make use of the boundedness property of $\mathcal{M}_\alpha$ in Lemma~\ref{bound-M-beta} and the global estimate proved in Lemma~\ref{lem:global}. In particular, it follows that
\begin{align}\label{est-001}
|\mathbb{S}(\mathfrak{a},\mathfrak{b})| \le C \left(\frac{1}{\mathfrak{a}\lambda}\int_{\Omega}\mathbb{U}(x)dx\right)^{\frac{d}{d-\alpha}} \le C \left(\frac{1}{\mathfrak{a}\lambda}\int_{\Omega}\mathbb{F}_{\sigma}(x)dx\right)^{\frac{d}{d-\alpha}}.
\end{align}
One the other hand, exploiting the fact that $\Omega \subset \mathcal{B}$ and using $\mathcal{M}_{\alpha}\mathbb{F}_{\sigma}(x_*) \le \mathfrak{b}\lambda$, it yields
\begin{align*}
\int_{\Omega}\mathbb{F}_{\sigma}(x)dx & \le \int_{\mathcal{B}}\mathbb{F}_{\sigma}(x)dx  \le |\mathcal{B}| (2D_0+R_0)^{-\alpha} \mathcal{M}_{\alpha}\mathbb{F}_{\sigma}(x_*) \le C |\mathcal{B}|^{1-\frac{\alpha}{d}} \mathfrak{b}\lambda.
\end{align*} 
And from~\eqref{est-001}, we readily obtain $|\mathbb{S}(\mathfrak{a},\mathfrak{b})| \le C (\mathfrak{b}/\mathfrak{a})^{\frac{d}{d-\alpha}} |\mathcal{B}|$. Combining this with~\eqref{est-002}, we conclude
\begin{align}\label{est-003}
\omega(\mathbb{S}(\mathfrak{a},\mathfrak{b})) \le C \left(\frac{2D_0}{R_0}+1\right)^{\iota_1} (\mathfrak{b}/\mathfrak{a})^{\frac{d\iota_2}{d-\alpha}} \omega(B^j), 
\end{align}
for each $j = 1,2,...,N$. Moreover, for every $\mathfrak{a}>0$ and $\varepsilon>0$, we observe that this time we can choose $\mathfrak{b}=\mathfrak{b}(\mathfrak{a},\varepsilon,\mathtt{data}_1)>0$ small enough such that 
\begin{align}\notag %\label{est-004}
C \left(\frac{2D_0}{R_0}+1\right)^{\iota_1} (\mathfrak{b}/\mathfrak{a})^{\frac{d\iota_2}{d-\alpha}} < \varepsilon, 
\end{align}
and the previous inequality~\eqref{est-003} directly implies $\omega(\mathbb{S}(\mathfrak{a},\mathfrak{b})) \le \varepsilon \omega(B^j)$.
\end{proof}

\begin{lemma}\label{lem:D2}
Let $x_0 \in \Omega$, $\varrho \in (0,R_0]$ and assume that there exists $x_1 \in B_{\varrho}(x_0)$ such that $\mathcal{M}_{\alpha}\mathbb{U}(x_1) \le \lambda$ for some $\lambda>0$. Then, for all $\mathfrak{a}>3^d$, the following estimate holds:
\begin{align}\label{est-lemD2}
|B_\varrho(x_0)\cap \mathbb{S}(\mathfrak{a},\mathfrak{b})| \le |B_\varrho(x_0)\cap \{\mathcal{M}_{\alpha}^{\varrho}(\chi_{B_{2\varrho}(x_0)}\mathbb{U}) > \mathfrak{a}\lambda\}|,
\end{align}
where $\mathcal{M}_{\alpha}^{\varrho}$ is so-called the cut-off fractional maximal operator, defined as
\begin{align*}
& {\mathcal{M}}^{\varrho}_{\alpha}\mathsf{g}(x)  = \sup_{0<r<\varrho} r^{\alpha} \fint_{B_r(x)} |\mathsf{g}(\nu)|d\nu, \quad x \in \mathbb{R}^d, \ \mathsf{g} \in L^1_{\mathrm{loc}}(\mathbb{R}^d). 
\end{align*}
\end{lemma}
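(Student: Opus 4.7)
\textbf{Proof plan for Lemma~\ref{lem:D2}.} The statement is a pointwise comparison: for every $x \in B_\varrho(x_0) \cap \mathbb{S}(\mathfrak{a},\mathfrak{b})$ I want to produce a small radius $r \in (0,\varrho)$ such that the averaging ball $B_r(x)$ still sits inside $B_{2\varrho}(x_0)$ and carries mass exceeding $\mathfrak{a}\lambda$. Once I have such an $r$, the truncated maximal operator $\mathcal{M}_\alpha^\varrho$ applied to $\chi_{B_{2\varrho}(x_0)}\mathbb{U}$ will exceed $\mathfrak{a}\lambda$ at $x$, which is exactly the containment~\eqref{est-lemD2}.

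The plan is as follows. Fix any $x \in B_\varrho(x_0)\cap \mathbb{S}(\mathfrak{a},\mathfrak{b})$. Since $\mathcal{M}_\alpha\mathbb{U}(x) > \mathfrak{a}\lambda$, by definition there exists $r>0$ with
\[
r^\alpha \fint_{B_r(x)} \mathbb{U}(z)\,dz > \mathfrak{a}\lambda.
\]
I will argue by contradiction that necessarily $r < \varrho$. Suppose instead that $r \ge \varrho$. Because $x_1,x \in B_\varrho(x_0)$, the triangle inequality gives $|x-x_1| < 2\varrho \le 2r$, hence $B_r(x) \subset B_{3r}(x_1)$. Consequently,
\[
\mathcal{M}_\alpha\mathbb{U}(x_1) \;\ge\; (3r)^\alpha \fint_{B_{3r}(x_1)} \mathbb{U}(z)\,dz
\;\ge\; 3^\alpha \cdot \frac{|B_r(x)|}{|B_{3r}(x_1)|}\cdot r^\alpha \fint_{B_r(x)}\mathbb{U}(z)\,dz
\;=\; 3^{\alpha-d}\, r^\alpha \fint_{B_r(x)}\mathbb{U}(z)\,dz.
\]
Combined with the chosen $r$ this yields $\mathcal{M}_\alpha\mathbb{U}(x_1) > 3^{\alpha-d}\mathfrak{a}\lambda$. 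Since $\alpha \in [0,d)$, the assumption $\mathfrak{a} > 3^d \ge 3^{d-\alpha}$ forces $\mathcal{M}_\alpha\mathbb{U}(x_1) > \lambda$, contradicting the hypothesis $\mathcal{M}_\alpha\mathbb{U}(x_1)\le \lambda$. Hence $r<\varrho$.

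With $r<\varrho$ secured, for every $y \in B_r(x)$ one has $|y-x_0| \le |y-x|+|x-x_0| < r+\varrho < 2\varrho$, so $B_r(x)\subset B_{2\varrho}(x_0)$. Therefore $\chi_{B_{2\varrho}(x_0)}\mathbb{U}$ coincides with $\mathbb{U}$ on $B_r(x)$, and
\[
\mathcal{M}_\alpha^\varrho\!\bigl(\chi_{B_{2\varrho}(x_0)}\mathbb{U}\bigr)(x) \;\ge\; r^\alpha \fint_{B_r(x)} \chi_{B_{2\varrho}(x_0)}(z)\,\mathbb{U}(z)\,dz \;=\; r^\alpha \fint_{B_r(x)} \mathbb{U}(z)\,dz \;>\; \mathfrak{a}\lambda,
\]
so $x \in \{\mathcal{M}_\alpha^\varrho(\chi_{B_{2\varrho}(x_0)}\mathbb{U}) > \mathfrak{a}\lambda\}$. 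Taking Lebesgue measure over all such $x \in B_\varrho(x_0)$ yields~\eqref{est-lemD2}.

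The only mildly delicate step is the geometric comparison $B_r(x)\subset B_{3r}(x_1)$ together with the sharp tracking of the factor $3^{\alpha-d}$, which is what pins the threshold to $\mathfrak{a}>3^d$; beyond that the argument is a direct unpacking of the definitions, and the hypothesis $\mathcal{M}_\alpha\mathbb{F}_\sigma \le \mathfrak{b}\lambda$ built into $\mathbb{S}(\mathfrak{a},\mathfrak{b})$ is not needed here (it will enter at the next stage of the covering argument).
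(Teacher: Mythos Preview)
Your proof is correct and follows essentially the same approach as the paper. The paper phrases the argument forward---it bounds the tail supremum $\sup_{r\ge\varrho} r^\alpha \fint_{B_r(y)}\mathbb{U}\,dz$ by $3^{d-\alpha}\mathcal{M}_\alpha\mathbb{U}(x_1)\le 3^d\lambda$ using the same inclusion $B_r(y)\subset B_{3r}(x_1)$, and then observes $\mathcal{M}_\alpha\mathbb{U}(y)\le\max\{\mathcal{M}_\alpha^\varrho\mathbb{U}(y),\,3^d\lambda\}$---whereas you pick a single witnessing radius and rule out $r\ge\varrho$ by contradiction; these are equivalent formulations of the same geometric comparison and the same $3^d$ threshold.
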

\begin{proof}
First, we fix a point $y \in B_\varrho(x_0)$. Then, for every $r \ge \varrho$ and $z \in B_r(y)$, it is clear that
\begin{align}\notag
|z-x_1| \le |z-y| + |y-x_0| + |x_0-x_1| < r + \varrho + \varrho < 3r,
\end{align}
which directly implies $B_r(y) \subset B_{3r}(x_1)$. It follows that
\begin{align}\notag\mathcal{T}_{\alpha}^{\varrho}\mathbb{U}(y) := \sup_{r\ge \varrho} r^{\alpha} \fint_{B_r(y)} \mathbb{U}(z) dz \le 3^d \sup_{r\ge \varrho} r^{\alpha} \fint_{B_{3r}(x_1)} \mathbb{U}(z) dz \le 3^{d-\alpha} \mathcal{M}_{\alpha}\mathbb{U}(x_1) \le 3^d \lambda.
\end{align}
Further, it also allows us to get 
$$ \mathcal{M}_{\alpha}\mathbb{U}(y) \le \max\left\{\mathcal{M}_{\alpha}^{\varrho}\mathbb{U}(y); \, 3^d \lambda\right\}.$$ 
On the other hand, for every $r < \varrho$ one also has $B_r(y) \subset B_{2\varrho}(x_0)$ and therefore
\begin{align}\notag
\mathcal{M}_{\alpha}^{\varrho}\mathbb{U}(y) = \sup_{0<r<\varrho} r^{\alpha} \fint_{B_r(y)} \mathbb{U}(z) dz = \sup_{0<r<\varrho} r^{\alpha} \fint_{B_r(y)} \left(\chi_{B_{2\varrho}(x_0)} \mathbb{U}\right)(z) dz.
\end{align}
Due to the reasons just shown above, it leads to the desired result
\begin{align}\notag
B_\varrho(x_0)\cap \mathbb{S}(\mathfrak{a},\mathfrak{b}) \subset B_\varrho(x_0)\cap \{\mathcal{M}_{\alpha}^{\varrho}\mathbb{U} > \mathfrak{a}\lambda\} = B_\varrho(x_0)\cap \{\mathcal{M}_{\alpha}^{\varrho}(\chi_{B_{2\varrho}(x_0)}\mathbb{U}) > \mathfrak{a}\lambda\}.
\end{align}
for all $\mathfrak{a} >3^d$.
\end{proof}

\begin{lemma}\label{lem:D3}
Assume that $B_\varrho(x_0) \cap \Omega \not\subset \left\{\mathcal{M}_{\alpha}\mathbb{U} > \lambda\right\}$ for some $x_0 \in \Omega$ and $\varrho \in (0,R_0]$. 
Then one can find a constant $\mathfrak{a} = \mathfrak{a}(\mathtt{data}_1)>0$ such that: for every $\varepsilon \in (0,1)$, there exist $\delta_0=\delta_0(\varepsilon,\mathtt{data}_1) \in (0,1/81)$ and $\mathfrak{b} = \mathfrak{b}(\varepsilon,\mathtt{data}_1)>0$ satisfying
\begin{align}\label{est-lemD3}
\omega(B_\varrho(x_0)\cap\mathbb{S}(\mathfrak{a},\mathfrak{b})) < \varepsilon\omega(B_\varrho(x_0)),
\end{align}
if provided $(\Omega,\mathbf{A}) \in \mathcal{H}(\delta_0,R_0)$. Here, the set $\mathbb{S}(\mathfrak{a},\mathfrak{b})$ was defined in Lemma~\ref{lem:D1}.
\end{lemma}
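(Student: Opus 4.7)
The plan is to first show a Lebesgue-measure version of~\eqref{est-lemD3} and then transfer to the $\omega$-weighted one via the Muckenhoupt property~\eqref{ineq-Muck}. Concretely, it suffices to find $\mathfrak{a}=\mathfrak{a}(\mathtt{data}_1)>0$ such that for every $\eta\in(0,1)$, one can choose $\delta_0,\mathfrak{b}$ small enough to ensure
\begin{align*}
|B_\varrho(x_0)\cap \mathbb{S}(\mathfrak{a},\mathfrak{b})| \le \eta\, |B_\varrho(x_0)|,
\end{align*}
since~\eqref{ineq-Muck} then yields $\omega(B_\varrho(x_0)\cap \mathbb{S}(\mathfrak{a},\mathfrak{b})) \le C_2\eta^{\iota_2}\omega(B_\varrho(x_0))$ and choosing $\eta = (\varepsilon/C_2)^{1/\iota_2}$ closes the argument. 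We may discard the trivial case $\mathbb{S}(\mathfrak{a},\mathfrak{b})=\emptyset$ and fix $x_1\in\Omega$ with $\mathcal{M}_\alpha\mathbb{F}_\sigma(x_1)\le \mathfrak{b}\lambda$; by hypothesis there is also $x_2\in B_\varrho(x_0)\cap\Omega$ with $\mathcal{M}_\alpha\mathbb{U}(x_2)\le \lambda$. Lemma~\ref{lem:D2}, applied at $x_2$, reduces our set to a level set of the cut-off operator $\mathcal{M}_\alpha^\varrho\bigl(\chi_{B_{2\varrho}(x_0)}\mathbb{U}\bigr)$ for $\mathfrak{a}>3^d$.

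Next, I split into two regimes depending on the position of $B_\varrho(x_0)$ with respect to $\partial\Omega$. If $B_{4\varrho}(x_0)\subset \Omega$, apply the interior Lemma~\ref{lem:comp-2} on $\mathfrak{B}_4=B_{4\varrho}(x_0)$; otherwise choose $y_0\in\partial\Omega$ minimizing $|x_0-y_0|$ so that $B_{2\varrho}(x_0)\subset B_{6\varrho}(y_0)\cap\Omega$ and apply the boundary Lemma~\ref{lem:boundary} on the surface ball $\Omega_{10}=B_{10\varrho}(y_0)\cap\Omega$. In either case one obtains a comparison pair $(\mathbf{W},\pi_{\mathbf{W}})$ defined on $B_{2\varrho}(x_0)$ such that, setting $\mathbb{W}:=|\nabla\mathbf{W}|^p+|\pi_{\mathbf{W}}|^{p'}$,
\begin{align*}
\|\mathbb{W}\|_{L^\infty(B_\varrho(x_0))} \le C\,\fint_{\mathfrak{E}}\!\!\bigl(\sigma^2+|\nabla\mathbf{u}|^2\bigr)^{p/2}dx + C\,\fint_{\mathfrak{E}}\!\!\mathbb{F}_\sigma\,dx, \\
\fint_{B_{2\varrho}(x_0)}\!\!|\mathbb{U}-\mathbb{W}|\,dx \le C\delta^\kappa\fint_{\mathfrak{E}}\!\!\bigl(\sigma^2+|\nabla\mathbf{u}|^2\bigr)^{p/2}dx + C\delta^{-\overline{p}\kappa}\fint_{\mathfrak{E}}\!\!\mathbb{F}_\sigma\,dx,
\end{align*}
where $\mathfrak{E}$ denotes either $\mathfrak{B}_4$ or $\Omega_{10}$. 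Since $\mathfrak{E}\subset B_{20\varrho}(x_2)$ and $\varrho\le R_0$, both averages on the right are controlled by $\mathcal{M}_\alpha\mathbb{U}(x_2)$ and $\mathcal{M}_\alpha\mathbb{F}_\sigma(x_1)$, giving $\|\mathbb{W}\|_{L^\infty(B_\varrho(x_0))}\le C_0(1+\mathfrak{b})\lambda$ and $\fint_{B_{2\varrho}(x_0)}|\mathbb{U}-\mathbb{W}|dx \le C_1(\delta^\kappa + \delta^{-\overline{p}\kappa}\mathfrak{b})\lambda$.

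To conclude, I fix once and for all $\mathfrak{a}=2C_0\cdot 3^d\cdot 2^{p+1}$ (so $\mathfrak{a}$ depends only on $\mathtt{data}_1$). Using $\mathbb{U}\le 2(|\mathbb{U}-\mathbb{W}|+\mathbb{W})$ and the sub-linearity of $\mathcal{M}_\alpha^\varrho$, the contribution of $\mathbb{W}$ is pointwise bounded on $B_\varrho(x_0)$ by $3^d\|\mathbb{W}\|_{L^\infty}\le \mathfrak{a}\lambda/4$ (once $\mathfrak{b}\le 1$), hence produces no points of $\{\mathcal{M}_\alpha^\varrho(\chi_{B_{2\varrho}(x_0)}\mathbb{U})>\mathfrak{a}\lambda\}$. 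The weak $L^1$-bound for $\mathcal{M}_\alpha$ in Lemma~\ref{bound-M-beta} applied to $|\mathbb{U}-\mathbb{W}|\chi_{B_{2\varrho}(x_0)}$ then gives
\begin{align*}
|B_\varrho(x_0)\cap \mathbb{S}(\mathfrak{a},\mathfrak{b})| \le C\Bigl(\tfrac{1}{\mathfrak{a}\lambda}\int_{B_{2\varrho}(x_0)}|\mathbb{U}-\mathbb{W}|\,dx\Bigr)^{\!d/(d-\alpha)} \le C\bigl(\delta^\kappa + \delta^{-\overline{p}\kappa}\mathfrak{b}\bigr)^{d/(d-\alpha)}|B_\varrho(x_0)|.
\end{align*}
Given $\eta>0$, we first pick $\delta_0$ so that $C\delta^{\kappa d/(d-\alpha)}<\eta/2$ and then $\mathfrak{b}$ so that $C(\delta_0^{-\overline{p}\kappa}\mathfrak{b})^{d/(d-\alpha)}<\eta/2$. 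Transferring via~\eqref{ineq-Muck} concludes the proof.

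\textbf{Main obstacle.} The delicate point is the boundary regime: one must choose the surface ball $\Omega_{10}$ so that Lemma~\ref{lem:boundary} applies (i.e., $10\varrho<R_0/9$, which requires an initial rescaling absorbing $R_0$) while simultaneously ensuring $B_\varrho(x_0)\subset \Omega_1(y_0)$ up to a harmless dilation, so that the $L^\infty$-bound~\eqref{est-boundary-2} actually controls $\mathbb{W}$ on $B_\varrho(x_0)$. Everything else is book-keeping around the good-$\lambda$ split and the choice of parameters.
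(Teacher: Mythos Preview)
Your approach is essentially identical to the paper's: localize via Lemma~\ref{lem:D2}, split into interior/boundary regimes, invoke the comparison Lemmas~\ref{lem:comp-2} and~\ref{lem:boundary}, kill the $\mathbb{W}$-contribution by choosing $\mathfrak{a}$ large, apply the weak-type bound of Lemma~\ref{bound-M-beta} to the remainder, and finally pass from Lebesgue to $\omega$-measure via~\eqref{ineq-Muck}.

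Two bookkeeping slips to fix. First, you must pick $x_1\in B_\varrho(x_0)\cap\mathbb{S}(\mathfrak{a},\mathfrak{b})$ (discarding the trivial case where this intersection is empty), not merely $x_1\in\Omega$; otherwise $\mathfrak{E}$ need not lie in any ball centered at $x_1$ and the claimed control $\fint_{\mathfrak{E}}\mathbb{F}_\sigma\lesssim\varrho^{-\alpha}\mathcal{M}_\alpha\mathbb{F}_\sigma(x_1)$ fails. Second, your radii are one scale too small: since $\mathcal{M}_\alpha^\varrho(\chi_{B_{2\varrho}(x_0)}\mathbb{W})(y)$ for $y\in B_\varrho(x_0)$ sees $\mathbb{W}$ on all of $B_{2\varrho}(x_0)$, you need the $L^\infty$-bound there, not just on $B_\varrho(x_0)$; the paper therefore takes $R=8\varrho$ in the interior (so $\mathfrak{B}_1=B_{8\varrho}(x_0)\supset B_{2\varrho}(x_0)$) and $R=12\varrho$ centered at $x_3\in\partial\Omega$ with $|x_0-x_3|<8\varrho$ on the boundary. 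Finally, keep track of the $\varrho^{-\alpha}$ factor in the averages: the correct chain is $\|\mathbb{W}\|_{L^\infty}\le C\varrho^{-\alpha}\lambda$ and then $\mathcal{M}_\alpha^\varrho(\chi\,\mathbb{W})\le \varrho^{\alpha}\|\mathbb{W}\|_{L^\infty}\le C\lambda$; your displayed bounds suppress this factor, though your final measure estimate is correct once it is restored.
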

\begin{proof}
We may assume with no loss of generality that $B_\varrho(x_0)\cap\mathbb{S}(\mathfrak{a},\mathfrak{b})\neq \emptyset$. Moreover, since $B_\varrho(x_0) \cap \Omega \not\subset \left\{\mathcal{M}_{\alpha}\mathbb{U} > \lambda\right\}$, it is therefore natural to find out $x_1, x_2 \in B_\varrho(x_0) \cap \Omega$ such that 
\begin{align}\label{x1x2}
\mathcal{M}_{\alpha}\mathbb{U}(x_1) \le \lambda, \quad \mathcal{M}_{\alpha}\mathbb{F}_{\sigma}(x_2) \le \mathfrak{b} \lambda.
\end{align}
Note that for $x_0 \in \Omega$, it can be viewed as two separate cases: $B_{8\varrho}(x_0) \subset \Omega$ and $B_{8\varrho}(x_0) \cap \partial \Omega \neq \emptyset$. For the second one, we claim that there exists $x_3 \in \partial \Omega$ such that 
$$\mathrm{dist}(x_0,\partial \Omega) = |x_0-x_3| < 8\varrho.$$ 
For future notational convenience, we regard $\Omega_1 = B_R(x_b) \cap \Omega$ and $\Omega_{k} = B_{kR}(x_b) \cap \Omega$, where
\begin{align}\notag
B_R(x_b) = \begin{cases} B_{8\varrho}(x_0), &\mbox{ if } B_{8\varrho}(x_0) \subset \Omega, \\ B_{12\varrho}(x_3), &\mbox { if } B_{8\varrho}(x_0) \cap \partial \Omega \neq \emptyset, \end{cases}
\end{align}
and
\begin{align}\notag
k = \begin{cases} 4, &\mbox{ if } B_{8\varrho}(x_0) \subset \Omega, \\ 10, &\mbox { if } B_{8\varrho}(x_0) \cap \partial \Omega \neq \emptyset. \end{cases}
\end{align}
Invoking Lemma~\ref{lem:comp-2} and Lemma~\ref{lem:boundary}, one can find $(\mathbf{w},\mathrm{\pi}_{\mathbf{w}}) \in W^{1,p}(\Omega_1;\mathbb{R}^d) \times L^{p'}(\Omega_1)$ such that
\begin{align}\label{est-42}
 \|\nabla \mathbf{w}\|_{L^{\infty}(\Omega_1;\mathbb{R}^{d^2})}^p + \|\mathrm{\pi}_{\mathbf{w}}\|_{L^{\infty}(\Omega_1)}^{p'} &\le C\fint_{\Omega_{k}} \mathbb{U}(x) + \mathbb{F}_{\sigma}(x) dx,
\end{align}
and
\begin{align}\label{est-41}
 \fint_{\Omega_1}|\nabla \mathbf{u} - \nabla \mathbf{w}|^p + |\mathrm{\pi} - \mathrm{\pi}_{\mathbf{w}}|^{p'} dx  & \le C \delta^{\kappa} \fint_{\Omega_{k}} \mathbb{U}(x) dx  + C \delta^{-\overline{p}\kappa} \fint_{\Omega_{k}} \mathbb{F}_{\sigma}(x) dx,
\end{align}
whenever $[\mathbf{A}]_{R_0} \le \delta$ for some $\delta \in (0,1)$, where the constant $C = C(\Omega,d,\Upsilon)>0$. Here, $\kappa$ is the positive constant appearing in Lemma~\ref{lem:comp-2}. Arriving at this stage, one can easily check that $B_{2\varrho}(x_0) \subset B_R(x_b)$, which therefore allows us to rewrite inequality~\eqref{est-lemD2} in Lemma~\ref{lem:D2} as
\begin{align}\notag
|B_\varrho(x_0)\cap \mathbb{S}(\mathfrak{a},\mathfrak{b})| \le |B_\varrho(x_0)\cap \{\mathcal{M}_{\alpha}^{\varrho}(\chi_{\Omega_1}\mathbb{U}) > \mathfrak{a}\lambda\}|.
\end{align}
for $\mathfrak{a}>3^d$. At this point, we are able to estimate
\begin{align}\label{est-007}
|B_\varrho(x_0)\cap \mathbb{S}(\mathfrak{a},\mathfrak{b})| & \le |B_\varrho(x_0)\cap \{\mathcal{M}_{\alpha}^{\varrho}(\chi_{\Omega_1}|\nabla \mathbf{u}|^p)> 2^{-1}\mathfrak{a}\lambda\}| \notag\\
& \hspace{1cm} + |B_\varrho(x_0)\cap \{\mathcal{M}_{\alpha}^{\varrho}(\chi_{\Omega_1}|\mathrm{\pi}|^{p'}) > 2^{-1}\mathfrak{a}\lambda\}| \notag \\
& \le |B_\varrho(x_0)\cap \{\mathcal{M}_{\alpha}^{\varrho}(\chi_{\Omega_1}|\nabla \mathbf{u} - \nabla \mathbf{w}|^p)> 2^{-p-1}\mathfrak{a}\lambda\}| \notag\\
& \hspace{1cm} + |B_\varrho(x_0)\cap \{\mathcal{M}_{\alpha}^{\varrho}(\chi_{\Omega_1}|\mathrm{\pi}-\mathrm{\pi}_{\mathbf{w}}|^{p'}) > 2^{-p'-1}\mathfrak{a}\lambda\}| \notag \\
& \hspace{2cm} + |B_\varrho(x_0)\cap \{\mathcal{M}_{\alpha}^{\varrho}(\chi_{\Omega_1}|\nabla \mathbf{w}|^p)> 2^{-p-1}\mathfrak{a}\lambda\}| \notag\\
& \hspace{3cm} + |B_\varrho(x_0)\cap \{\mathcal{M}_{\alpha}^{\varrho}(\chi_{\Omega_1}|\mathrm{\pi}_{\mathbf{w}}|^{p'}) > 2^{-p'-1}\mathfrak{a}\lambda\}|.
\end{align}
Since $x_1 \in B_{\varrho}(x_0)$, the ball $B_{kR}(x_b)$ can be covered by ball $B_{(k+1)R}(x_1)$. Thus, a combination of this assertion and~\eqref{x1x2} yields that
\begin{align}\label{est-43}
\fint_{\Omega_k}\mathbb{U}(x)dx \le \frac{|B_{(k+1)R}(x_1)|}{|B_{kR}(x_b)|} \fint_{B_{(k+1)R}(x_1)}\mathbb{U}(x)dx \le C \varrho^{-\alpha} \mathcal{M}_{\alpha} \mathbb{U}(x_1) \le C \varrho^{-\alpha} \lambda.
\end{align}
Follow the similar path, we also obtain
\begin{align}\label{est-43-b}
\fint_{\Omega_k}\mathbb{F}_{\sigma}(x)dx \le \frac{|B_{(k+1)R}(x_2)|}{|B_{kR}(x_b)|} \fint_{B_{(k+1)R}(x_2)}\mathbb{F}_{\sigma}(x)dx \le C \varrho^{-\alpha} \mathcal{M}_{\alpha} \mathbb{F}_{\sigma}(x_2) \le C \varrho^{-\alpha} \mathfrak{b} \lambda.
\end{align}
Substituting~\eqref{est-43} and~\eqref{est-43-b} into~\eqref{est-42}, it implies that
\begin{align}\label{est-45}
& \|\nabla \mathbf{w}\|_{L^{\infty}(\Omega_1;\mathbb{R}^{d^2})}^p + \|\mathrm{\pi}_{\mathbf{w}}\|_{L^{\infty}(\Omega_1)}^{p'} \le C \varrho^{-\alpha} (1 + \mathfrak{b}) \lambda  \le C^* \varrho^{-\alpha} \lambda.
\end{align}
By~\eqref{est-45}, for every $y \in B_{\varrho}(x)$, the following estimate holds true:
\begin{align}\notag
& \mathcal{M}_{\alpha}^{\varrho}(\chi_{\Omega_1}|\nabla \mathbf{w}|^p)(y) + \mathcal{M}_{\alpha}^{\varrho}(\chi_{\Omega_1}|\mathrm{\pi}_{\mathbf{w}}|^{p'})(y) \le \varrho^{\alpha} \left(\|\nabla \mathbf{w}\|_{L^{\infty}(\Omega_1;\mathbb{R}^{d^2})}^p + \|\mathrm{\pi}_{\mathbf{w}}\|_{L^{\infty}(\Omega_1)}^{p'}\right) \le C^* \lambda,
\end{align}
which guarantees that two sets 
\begin{align}\notag
B_\varrho(x_0)\cap \{\mathcal{M}_{\alpha}^{\varrho}(\chi_{\Omega_1}|\nabla \mathbf{w}|^p)> 2^{-p-1}\mathfrak{a}\lambda\} \ \mbox{ and } \ B_\varrho(x_0)\cap \{\mathcal{M}_{\alpha}^{\varrho}(\chi_{\Omega_1}|\mathrm{\pi}_{\mathbf{w}}|^{p'}) > 2^{-p'-1}\mathfrak{a}\lambda\}
\end{align}
are empty if chosen $\mathfrak{a}> \max\{3^d; 2^{p+1}C^*, 2^{p'+1}C^*\}$, where $C^*$ is exactly the constant pointed out in~\eqref{est-45}. From~\eqref{est-007} and Lemma~\ref{bound-M-beta}, we arrive at
\begin{align}\label{est-008}
|B_\varrho(x_0)\cap \mathbb{S}(\mathfrak{a},\mathfrak{b})| & \le |B_\varrho(x_0)\cap \{\mathcal{M}_{\alpha}^{\varrho}(\chi_{\Omega_1}|\nabla \mathbf{u} - \nabla \mathbf{w}|^p)> 2^{-p-1}\mathfrak{a}\lambda\}| \notag\\
& \hspace{1cm} + |B_\varrho(x_0)\cap \{\mathcal{M}_{\alpha}^{\varrho}(\chi_{\Omega_1}|\mathrm{\pi}-\mathrm{\pi}_{\mathbf{w}}|^{p'}) > 2^{-p'-1}\mathfrak{a}\lambda\}| \notag \\
& \le C\left(\frac{1}{2^{-p-1}\mathfrak{a}\lambda}\int_{\Omega_1}|\nabla \mathbf{u} - \nabla \mathbf{w}|^pdx\right)^{\frac{d}{d-\alpha}} \notag \\
& \hspace{2cm} + C\left(\frac{1}{2^{-p'-1}\mathfrak{a}\lambda}\int_{\Omega_1}|\mathrm{\pi}-\mathrm{\pi}_{\mathbf{w}}|^{p'} dx\right)^{\frac{d}{d-\alpha}}\notag \\
& \le  C\left(\frac{\varrho^d}{\mathfrak{a}\lambda}\fint_{\Omega_1}|\nabla \mathbf{u} - \nabla \mathbf{w}|^p + |\mathrm{\pi}-\mathrm{\pi}_{\mathbf{w}}|^{p'} dx\right)^{\frac{d}{d-\alpha}}.
\end{align}
Next, combining the preceding estimates~\eqref{est-43},~\eqref{est-43-b} and taking~\eqref{est-41} into account, we obtain
\begin{align}\notag %\label{est-46}
& \fint_{\Omega_1}|\nabla \mathbf{u} - \nabla \mathbf{w}|^p + |\mathrm{\pi} - \mathrm{\pi}_{\mathbf{w}}|^{p'} dx \le C \varrho^{-\alpha} \left(\delta^{\kappa} + \delta^{-\overline{p}\kappa}\mathfrak{b}\right) \lambda. 
\end{align}
Then, invoking~\eqref{est-008}, we estimate
\begin{align}\label{est-009}
|B_\varrho(x_0)\cap \mathbb{S}(\mathfrak{a},\mathfrak{b})| & \le C\left(\frac{\varrho^d}{\mathfrak{a}\lambda} \varrho^{-\alpha} \left(\delta^{\kappa} + \delta^{-\overline{p}\kappa}\mathfrak{b}\right) \lambda \right)^{\frac{d}{d-\alpha}} \notag \\
& \le C \left(\frac{\delta^{\kappa} + \delta^{-\overline{p}\kappa}\mathfrak{b}}{\mathfrak{a}}\right)^{\frac{d}{d-\alpha}} |B_\varrho(x_0)|.
\end{align}
Consequently, we exploit the weight $\omega \in \mathcal{A}_{\infty}$ and make use of~\eqref{est-009} to conclude
\begin{align}\label{est-010}
\omega(B_\varrho(x_0)\cap \mathbb{S}(\mathfrak{a},\mathfrak{b})) & \le C_2 \left(\frac{|B_\varrho(x_0)\cap \mathbb{S}(\mathfrak{a},\mathfrak{b})|}{|B_\varrho(x_0)|}\right)^{\iota_2} \omega(B_\varrho(x_0)) \notag \\
& \le C^{**} \left(\frac{\delta^{\kappa} + \delta^{-\overline{p}\kappa}\mathfrak{b}}{\mathfrak{a}}\right)^{\frac{d\iota_2}{d-\alpha}} \omega(B_\varrho(x_0)) .
\end{align}
Finally, for every $\varepsilon >0$ and for $\mathfrak{a} = \mathfrak{a}(\mathtt{data}_1)>0$ large enough, at this stage we choose $\delta=\delta_0(\varepsilon,\mathtt{data}_1) \in (0,1/81)$ and $\mathfrak{b} = \mathfrak{b}(\varepsilon,\mathtt{data}_1)>0$ such that 
$$C^{**} \left(\frac{\delta_0^{\kappa} + \delta_0^{-\overline{p}\kappa}\mathfrak{b}}{\mathfrak{a}}\right)^{\frac{d\iota_2}{d-\alpha}} < \varepsilon,$$
and this completes the proof of the Lemma.
\end{proof}

With all the preceding results at hand, the proof of  Theorem~\ref{theo:dist} is quite simple. 

\begin{proof}[Proof of Theorem~\ref{theo:dist}]
Let us first consider two subsets of $\Omega$ as following
\begin{align*}
\mathcal{S}_1 :=\mathbb{S}(\mathfrak{a},\mathfrak{b}) = \left\{\mathcal{M}_{\alpha}\mathbb{U} > \mathfrak{a} \lambda; \ \mathcal{M}_{\alpha}\mathbb{F}_{\sigma} \le \mathfrak{b}\lambda\right\}, \ \mbox{ and } \ \mathcal{S}_2 := \left\{\mathcal{M}_{\alpha}\mathbb{U} >  \lambda\right\}.
\end{align*}
Thanks to Lemma~\ref{lem:D1} and Lemma~\ref{lem:D3}, it is possible to find a constant $\mathfrak{a} = \mathfrak{a}(\mathtt{data}_1)>0$ such that for every $\varepsilon \in (0,1)$, the statement of $(i)$ and $(ii)$ in Lemma~\ref{lem:VCL} hold true with appropriate choice of $\delta_0=\delta_0(\varepsilon,\mathtt{data}_1)>0$ and $\mathfrak{b} = \mathfrak{b}(\varepsilon,\mathtt{data}_1)>0$, where $(\Omega,\mathbf{A}) \in \mathcal{H}(\delta_0,R_0)$ for some $R_0>0$. This finally gives
\begin{align*}
\omega\left(\left\{\mathcal{M}_{\alpha}\mathbb{U} > \mathfrak{a} \lambda; \ \mathcal{M}_{\alpha}\mathbb{F}_{\sigma} \le \mathfrak{b}\lambda\right\}\right) \le C \varepsilon \omega\left(\left\{\mathcal{M}_{\alpha}\mathbb{U} > \lambda\right\}\right),
\end{align*}
and directly implies to the following level-set inequality
\begin{align*}
\omega\left(\left\{\mathcal{M}_{\alpha}\mathbb{U} > \mathfrak{a} \lambda\right\}\right) \le C \varepsilon \omega\left(\left\{\mathcal{M}_{\alpha}\mathbb{U} > \lambda\right\}\right) + \omega\left(\left\{\mathcal{M}_{\alpha}\mathbb{F}_{\sigma} > \mathfrak{b}\lambda\right\}\right).
\end{align*}
This is equivalent to the desired estimate~\eqref{ineq-dist}. The proof is complete.
\end{proof}

\subsection{In generalized weighted Lorentz spaces}

At this point, having at hand the key ingredient: level-set inequality via WFMDs, the target is clearly defined. The aim of this section is to verify regularity estimates in the generalized function spaces as outlined in Section~\ref{sec:intro}. The first result we want to mention here is the global regularity in Lorentz spaces with two weights. 

\begin{proof}[Proof of Theorem~\ref{theo:main}]
In the framework of Theorem~\ref{theo:dist}, it leads us to find a constant $\mathfrak{a} = \mathfrak{a}(\mathtt{data}_1)>0$ such that for every $\varepsilon \in (0,1)$, there exist $\delta_0=\delta_0(\varepsilon,\mathtt{data}_1) \in (0,1/81)$, $\mathfrak{b} = \mathfrak{b}(\varepsilon,\mathtt{data}_1)>0$ and $C^*=C^*(\mathtt{data}_1)>1$ satisfying
\begin{align}\label{est-111}
\mathbb{X}(\mathfrak{a}\lambda) \le C^* \left[\varepsilon \mathbb{X}(\lambda) +  \mathbb{Y}(\mathfrak{b}\lambda)\right], \quad \forall \lambda>0,
%\mathbf{D}^{\alpha,\omega}_{\mathbb{U}} (a\lambda) \le C^* \varepsilon \mathbf{D}^{\alpha,\omega}_{\mathbb{U}} (\lambda) + C^* \mathbf{D}^{\alpha,\omega}_{\mathbb{F}_{\sigma}} ({\varepsilon}^{b}\lambda),
\end{align}
when the assumption $(\Omega,\mathbf{A}) \in \mathcal{H}(\delta_0,R_0)$ is imposed, for some $R_0>0$. Taking the non-decreasing property of $\Psi$ and assumption~\eqref{cond:V} into account, for every $\lambda_1, \lambda_2 \ge 0$, we have
\begin{align}\notag %\label{cond:V-2}
\Psi(\lambda_1 + \lambda_2) \le \Psi(2 \max\{\lambda_1; \lambda_2\}) \le \beta_2 \Psi( \max\{\lambda_1; \lambda_2\}) \le \beta_2 (\Psi(\lambda_1) + \Psi(\lambda_2)). 
\end{align}
Now, it is clear to claim that $m = [\log_2(C^*)] \in \mathbb{N}$ (the greatest integer less than $\log_2(C^*)$). And thus, we readily deduce $2^{m-1} < C^* \le 2^m$. By~\eqref{Del-2}, for every $\lambda>0$ one has
\begin{align}\notag %\label{cond:V-2b}
\Psi(C^*\lambda) \le \Psi(2^m\lambda) \le \beta_2^m \Psi(\lambda).
\end{align}
Combining the last two inequalities and merging them with~\eqref{est-111}, we arrive at
\begin{align}\label{est-112b}
\Psi\left(\mathbb{X}(\mathfrak{a}\lambda)\right) & \le \beta_2 \left[  \Psi\left(C^*\varepsilon \mathbb{X}(\lambda)\right) + \Psi\left(C^* \mathbb{Y}(\mathfrak{b}\lambda)\right) \right] \le \beta_2^{m+1} \left[\Psi\left(\varepsilon \mathbb{X}(\lambda)\right) + \Psi\left(\mathbb{Y}(\mathfrak{b}\lambda)\right) \right].
\end{align}
At this stage, for all $\mathfrak{s},\mathfrak{t} \in (0,\infty)$, the quasi-norm of generalized weighted Lorentz spaces in Definition~\ref{def:Lorentz} rewrites
\begin{align}\label{est-113}
\|\mathcal{M}_{\alpha}\mathbb{U}\|_{\mathcal{L}^{\mathfrak{s},\mathfrak{t}}_{\mu,\omega}(\Omega)}^{\mathfrak{t}} & = \mathfrak{s} \displaystyle{\int_0^\infty \lambda^{\mathfrak{t}} \left[\Psi\left(\mathbb{X}(\lambda)\right) \right]^{\frac{\mathfrak{t}}{\mathfrak{s}}} \frac{d\lambda}{\lambda}}  = {\mathfrak{a}}^{\mathfrak{t}} \mathfrak{s} \displaystyle{\int_0^\infty \lambda^{\mathfrak{t}} \left[\Psi\left(\mathbb{X}(\mathfrak{a}\lambda)\right) \right]^{\frac{\mathfrak{t}}{\mathfrak{s}}} \frac{d\lambda}{\lambda}}.
\end{align}
Applying~\eqref{est-112b} to~\eqref{est-113}, we get
\begin{align}\label{est-114}
\|\mathcal{M}_{\alpha}\mathbb{U}\|_{\mathcal{L}^{\mathfrak{s},\mathfrak{t}}_{\mu,\omega}(\Omega)}^{\mathfrak{t}} & \le C {\mathfrak{a}}^{\mathfrak{t}} \mathfrak{s} \displaystyle{\int_0^\infty \lambda^{\mathfrak{t}} \left[\Psi\left(\varepsilon \mathbb{X}(\lambda)\right)\right]^{\frac{\mathfrak{t}}{\mathfrak{s}}} \frac{d\lambda}{\lambda}} \notag \\
& \qquad \qquad + C {\mathfrak{a}}^{\mathfrak{t}} \mathfrak{s} \displaystyle{\int_0^\infty \lambda^{\mathfrak{t}} \left[\Psi\left(\mathbb{Y}(\mathfrak{b}\lambda)\right)\right]^{\frac{\mathfrak{t}}{\mathfrak{s}}} \frac{d\lambda}{\lambda}}.
\end{align}
For every $\varepsilon \in (0,1)$, it finds $k_{\varepsilon} \in \mathbb{N}$ such that 
$$1/2<2^{k_{\varepsilon}} \varepsilon \le 1 \Leftrightarrow \log_2(1/\varepsilon)-1 < k_{\varepsilon} \le \log_2(1/\varepsilon),$$ 
and this allows us to make use of~\eqref{cond:V} to arrive at
\begin{align}\notag
 \Psi(\varepsilon\lambda) \le {\beta_1}^{-k_{\varepsilon}} \Psi(2^{k_{\varepsilon}}\varepsilon\lambda) \le {\beta_1}^{-k_{\varepsilon}} \Psi(\lambda) \le {\beta_1}^{1-\log_2(1/\varepsilon)} \Psi(\lambda), \quad \forall \lambda \ge 0.
\end{align}
Merging this inequality to~\eqref{est-114}, it further deduces that
\begin{align}\notag %\label{est-115}
\|\mathcal{M}_{\alpha}\mathbb{U}\|_{\mathcal{L}^{\mathfrak{s},\mathfrak{t}}_{\mu,\omega}(\Omega)}^{\mathfrak{t}} & \le C {\mathfrak{a}}^{\mathfrak{t}}  {\beta_1}^{\frac{\mathfrak{t}}{\mathfrak{s}}(1-\log_2(1/\varepsilon))} \mathfrak{s}\displaystyle{\int_0^\infty \lambda^{\mathfrak{t}} \left[\Psi\left(\mathbb{X}(\lambda)\right)\right]^{\frac{\mathfrak{t}}{\mathfrak{s}}} \frac{d\lambda}{\lambda}} \notag \\
& \qquad \qquad + C {\mathfrak{a}}^{\mathfrak{t}} {\mathfrak{b}}^{-\mathfrak{t}} \mathfrak{s} \displaystyle{\int_0^\infty \lambda^{\mathfrak{t}} \left[\Psi\left(\mathbb{Y}(\lambda)\right)\right]^{\frac{\mathfrak{t}}{\mathfrak{s}}} \frac{d\lambda}{\lambda}}. \notag
\end{align}
This guarantees the following estimate
\begin{align}\label{est-116}
\|\mathcal{M}_{\alpha}\mathbb{U}\|_{\mathcal{L}^{\mathfrak{s},\mathfrak{t}}_{\mu,\omega}(\Omega)} & \le C {\mathfrak{a}}  {\beta_1}^{\frac{1-\log_2(1/\varepsilon)}{\mathfrak{s}}} \|\mathcal{M}_{\alpha}\mathbb{U}\|_{\mathcal{L}^{\mathfrak{s},\mathfrak{t}}_{\mu,\omega}(\Omega)} + C {\mathfrak{a}} {\mathfrak{b}}^{-1} \|\mathcal{M}_{\alpha}\mathbb{F}_{\sigma}\|_{\mathcal{L}^{\mathfrak{s},\mathfrak{t}}_{\mu,\omega}(\Omega)}.
\end{align}
In a similar fashion,~\eqref{est-116} also holds for $\mathfrak{t} = \infty$ and $\mathfrak{s} \in (0,\infty)$. At this point, note that by $\beta_1>1$, we have
$$ \displaystyle{\lim_{\varepsilon \to 0^+} {\beta_1}^{\frac{1-\log_2(1/\varepsilon)}{\mathfrak{s}}} = 0}.$$
Thus, to conclude we choose $\varepsilon = \varepsilon(\mathtt{data}_2)>0$ in~\eqref{est-116} small enough such that 
$$\displaystyle{C {\mathfrak{a}}  {\beta_1}^{\frac{1-\log_2(1/\varepsilon)}{\mathfrak{s}}} < \frac{1}{2}},$$ 
and the desired estimate~\eqref{ineq-main} follows easily.
\end{proof}

\subsection{In $\psi$-generalized Morrey spaces}

The second main outcome stated in Theorem~\ref{theo:improv} follows as a consequence of Theorem~\ref{theo:main}. Now we are in position to prove this result.

\begin{proof}[Proof of Theorem~\ref{theo:improv}]
Thanks to Theorem~\ref{theo:dist}, one can find a constant $\mathfrak{a} = \mathfrak{a}(\mathtt{data})>0$ such that for every $\varepsilon \in (0,1)$, there exist $\delta_0=\delta_0(\varepsilon,\mathtt{data}_1) \in (0,1/81)$, $\mathfrak{b} = \mathfrak{b}(\varepsilon,\mathtt{data}_1)>0$ and $C^*=C^*(\mathtt{data}_1)>1$ satisfying~\eqref{est-111} if $(\Omega,\mathbf{A}) \in \mathcal{H}(\delta_0,R_0)$ for some $R_0>0$. For every $\mathfrak{s} \in (0, \infty)$ and $\omega \in \mathcal{A}_{\infty}$, thanks to~\eqref{est-111} there holds
\begin{align}\label{est-200}
\|\mathcal{M}_{\alpha}\mathbb{U}\|_{L^{\mathfrak{s}}_{\omega}(\Omega)} &= {\mathfrak{a}} \left(\mathfrak{s} \displaystyle{\int_0^\infty \lambda^{\mathfrak{s}-1} \mathbb{X}(\mathfrak{a}\lambda) d\lambda}\right)^{\frac{1}{\mathfrak{s}}} \notag \\
& \le C  {\mathfrak{a}} \left(\varepsilon  \mathfrak{s} \displaystyle{\int_0^\infty \lambda^{\mathfrak{s}-1} \mathbb{X}(\lambda) d\lambda}\right)^{\frac{1}{\mathfrak{s}}} + C {\mathfrak{a}} \mathfrak{b}^{-1} \left( \mathfrak{s} \displaystyle{\int_0^\infty \lambda^{\mathfrak{s}-1} \mathbb{Y}(\lambda) d\lambda}\right)^{\frac{1}{\mathfrak{s}}} \notag \\
& = C  {\mathfrak{a}} \varepsilon^{\frac{1}{\mathfrak{s}}} \|\mathcal{M}_{\alpha}\mathbb{U}\|_{L^{\mathfrak{s}}_{\omega}(\Omega)} + C {\mathfrak{a}} \mathfrak{b}^{-1}\|\mathcal{M}_{\alpha}\mathbb{F}_{\sigma}\|_{L^{\mathfrak{s}}_{\omega}(\Omega)}.
\end{align}
Taking a suitable choice of $\varepsilon>0$ in~\eqref{est-200}, it yields
\begin{align}\notag
\int_{\Omega} \left[\mathcal{M}_{\alpha}\mathbb{U}(x)\right]^{\mathfrak{s}} \omega(x) dx & = \int_{\Omega} \left[\mathcal{M}_{\alpha}\mathbb{F}_{\sigma}(x)\right]^{\mathfrak{s}} \omega(x) dx.
\end{align}
Clearly, one may rewrite this inequality as
\begin{align}\label{est-0}
\int_{\mathbb{R}^d} \left[\chi_{\Omega}\mathcal{M}_{\alpha}\mathbb{U}(x)\right]^{\mathfrak{s}} \omega(x) dx \le C\int_{\mathbb{R}^d} \left[\chi_{\Omega}\mathcal{M}_{\alpha}\mathbb{F}_{\sigma}(x)\right]^{\mathfrak{s}} \omega(x) dx,
\end{align}
and at this stage, thanks to~\eqref{Morrey-norm}, we arrive at
\begin{align}\notag
\|\mathcal{M}_{\alpha}\mathbb{U}\|_{\mathrm{M}^{\mathfrak{s},\psi}(\Omega)} = \sup_{y\in \Omega; \, 0<\varrho<\mathrm{diam}(\Omega)} \left(\frac{1}{\psi(y,\varrho)}\int_{\Omega_{\varrho}(y)}|\mathcal{M}_{\alpha}\mathbb{U}(x)|^{\mathfrak{s}} dx\right)^{\frac{1}{\mathfrak{s}}}.
\end{align}
For every $y \in \Omega$ and $0<\varrho<\mathrm{diam}(\Omega)$, we have to estimate the following term
$$ T(y,\varrho) := \frac{1}{\psi(y,\varrho)}\int_{\Omega_{\varrho}(y)}[\mathcal{M}_{\alpha}\mathbb{U}(x)]^sdx.$$
For the sake of brevity, we shall denote $\mathfrak{h} := \chi_{B_{\varrho}(y)}$. Thus, for every $x \in \mathbb{R}^d$, the following estimate holds true
\begin{align*}
\chi_{\Omega_{\varrho}(y)}(x) \le \mathfrak{h}(x) \le \mathcal{M} \mathfrak{h}(x) \le \left(\mathcal{M} \mathfrak{h}\right)^{\vartheta}(x) \le 1,
\end{align*}
for any $\vartheta \in (0,1)$. Therefore, we readily obtain that
\begin{align}\label{est-1}
T(y,\varrho) & = \frac{1}{\psi(y,\varrho)} \int_{\mathbb{R}^d}[\chi_{\Omega}\mathcal{M}_{\alpha}\mathbb{U}(x)]^s \mathfrak{h}(x) dx \notag \\
& \le \frac{1}{\psi(y,\varrho)} \int_{\mathbb{R}^d}[\chi_{\Omega}\mathcal{M}_{\alpha}\mathbb{U}(x)]^s [\mathcal{M}\mathfrak{h}(x)]^{\vartheta} dx.
\end{align}
Thanks to \cite[Proposition 2]{CR80}, for $\vartheta \in (0,1)$ one has $\left(\mathcal{M}\mathfrak{h}\right)^{\vartheta} \in \mathcal{A}_1 \subset \mathcal{A}_{\infty}$. For this reason, one may apply~\eqref{est-0} with $\omega = \left(\mathcal{M}\mathfrak{h}\right)^{\vartheta}$ to arrive at
\begin{align}\notag %\label{est-0b}
\int_{\mathbb{R}^d} \left[\chi_{\Omega}\mathcal{M}_{\alpha}\mathbb{U}(x)\right]^{\mathfrak{s}} \left(\mathcal{M}\mathfrak{h}(x)\right)^{\vartheta} dx \le C\int_{\mathbb{R}^d} \left[\chi_{\Omega}\mathcal{M}_{\alpha}\mathbb{F}_{\sigma}(x)\right]^{\mathfrak{s}} \left(\mathcal{M}\mathfrak{h}(x)\right)^{\vartheta} dx.
\end{align}
Plugging this estimate into~\eqref{est-1}, it gives
\begin{align}\label{est-2}
T(y,\varrho) & \le \frac{C}{\psi(y,\varrho)} \int_{\mathbb{R}^d} \left[\chi_{\Omega}\mathcal{M}_{\alpha}\mathbb{F}_{\sigma}(x)\right]^{\mathfrak{s}} \left(\mathcal{M}\mathfrak{h}(x)\right)^{\vartheta} dx.
\end{align}
Let us now denote $\varrho_n = 2^n\varrho$ for $n \in \mathbb{N}$ and use a dyadic decomposition of $\mathbb{R}^d$ by 
\begin{align*}
\mathbb{R}^d = B_{\varrho_1}(y) \cup \left(\bigcup_{n=1}^{\infty} B_{\varrho_{n+1}}(y) \setminus B_{\varrho_{n}}(y) \right).
\end{align*}
Thus, we are able to estimate~\eqref{est-2} as below
\begin{align}\label{est-3}
T(y,\varrho) & \le \frac{C}{\psi(y,\varrho)}\int_{B_{\varrho_1}(y)} \left[\chi_{\Omega}\mathcal{M}_{\alpha}\mathbb{F}_{\sigma}(x)\right]^{\mathfrak{s}} \left(\mathcal{M}\mathfrak{h}(x)\right)^{\vartheta} dx \notag \\
& \qquad \qquad + \frac{C}{\psi(y,\varrho)} \sum_{n=1}^{\infty}\int_{B_{\varrho_{n+1}}(y) \setminus B_{\varrho_n}(y)} \left[\chi_{\Omega}\mathcal{M}_{\alpha}\mathbb{F}_{\sigma}(x)\right]^{\mathfrak{s}} \left(\mathcal{M}\mathfrak{h}(x)\right)^{\vartheta} dx.
\end{align}
Furthermore, for every $x \in B_{\varrho_{n+1}}(y) \setminus B_{\varrho_n}(y)$, one has $\varrho_n \le |x - y| < \varrho_{n+1}$, and for all $z \in B_{\varrho}(y)$, it is easy to check that
\begin{align*}
|z - x| \ge |x-y| - |z - y| > \varrho_n - \varrho,
\end{align*}
which ensures that $B_r(x) \cap B_{\varrho}(y)$ is empty for every $r \le \varrho_n-\varrho$. It allows to carry over the quantity
\begin{align}\label{est-Mh}
\mathcal{M}\mathfrak{h}(x) & = \sup_{r>0} \fint_{B_r(x)} \chi_{B_{\varrho}(y)}(z)dz = \sup_{r>0} \frac{|B_r(x) \cap B_{\varrho}(y)|}{|B_r(x)|} \notag\\
& =  \sup_{r>\varrho_n-\varrho} \frac{|B_r(x) \cap B_{\varrho}(y)|}{|B_r(x)|} \notag \\
& = \max\left\{\sup_{\varrho_n-\varrho < r < \varrho_{n+1}+\varrho} \frac{|B_r(x) \cap B_{\varrho}(y)|}{|B_r(x)|}; \sup_{r \ge \varrho_{n+1}+\varrho} \frac{|B_r(x) \cap B_{\varrho}(y)|}{|B_r(x)|}\right\}.
\end{align}
On the other hand, for all $z \in B_{\varrho}(y)$ and $r \ge \varrho + \varrho_{n+1}$, a simple computation we have
\begin{align*}
|z - x| \le |z-y| + |y - x| < \varrho + \varrho_{n+1} \le r,
\end{align*}
which concludes
\begin{align*}
\sup_{r \ge \varrho_{n+1}+\varrho} \frac{|B_r(x) \cap B_{\varrho}(y)|}{|B_r(x)|} = \sup_{r \ge \varrho_{n+1}+\varrho} \frac{|B_{\varrho}(y)|}{|B_r(x)|} = \left(\frac{\varrho}{\varrho_{n+1}+\varrho}\right)^d = \frac{1}{(2^{n+1}+1)^d} \le \frac{1}{2^{nd}}. 
\end{align*}
Moreover, as we have
\begin{align}\notag
\sup_{\varrho_n-\varrho < r < \varrho_{n+1}+\varrho} \frac{|B_r(x) \cap B_{\varrho}(y)|}{|B_r(x)|} \le \sup_{\varrho_n-\varrho < r < \varrho_{n+1}+\varrho} \frac{|B_{\varrho}(y)|}{|B_r(x)|} = \frac{1}{(2^{n}-1)^d} \le \frac{2^d}{2^{nd}},
\end{align}
so merging the last two inequalities into~\eqref{est-Mh}, it yields
\begin{align*}
\mathcal{M}\mathfrak{h}(x) \le \frac{2^d}{2^{nd}}, \mbox{ for all } x \in B_{\varrho_{n+1}}(y) \setminus B_{\varrho_n}(y).
\end{align*}
Reabsorbing this estimate into the right-hand side of~\eqref{est-3} and using assumption~\eqref{cond-psi-2} on $\psi$ in hand, one gets that
\begin{align}\label{est-4}
T(y,\varrho) & \le \frac{C}{\psi(y,\varrho)}\int_{B_{\varrho_1}(y)} \left[\chi_{\Omega}\mathcal{M}_{\alpha}\mathbb{F}_{\sigma}(x)\right]^{\mathfrak{s}} dx \notag \\ 
& \qquad + C \sum_{n=1}^{\infty} \frac{1}{\psi(y,\varrho)} \frac{1}{2^{nd\vartheta}} \int_{B_{\varrho_{n+1}}(y)} \left[\chi_{\Omega}\mathcal{M}_{\alpha}\mathbb{F}_{\sigma}(x)\right]^{\mathfrak{s}} dx \notag \\
& \le \frac{C\beta_0}{\psi(y,2\varrho)}\int_{B_{\varrho_1}(y)} \left[\chi_{\Omega}\mathcal{M}_{\alpha}\mathbb{F}_{\sigma}(x)\right]^{\mathfrak{s}} dx \notag \\
& \qquad +  \sum_{n=1}^{\infty} \frac{C \beta_0^{n+1}}{2^{nd\vartheta}} \frac{1}{\psi(y,\varrho_{n+1})} \int_{B_{\varrho_{n+1}}(y)} \left[\chi_{\Omega}\mathcal{M}_{\alpha}\mathbb{F}_{\sigma}(x)\right]^{\mathfrak{s}} dx \notag \\
& \le C \beta_0 \left[1 + \sum_{n=1}^{\infty} \left(\frac{\beta_0}{2^{d\vartheta}}\right)^n\right] \|\mathcal{M}_{\alpha}\mathbb{F}_{\sigma}\|_{\mathrm{M}^{\mathfrak{s},\psi}(\Omega)}^{\mathfrak{s}}.
\end{align}
Since $\beta_0 \in (1,2^d)$, it is now possible to take $\vartheta = \vartheta(d,\beta_0) \in (0,1)$ such that 
$$\frac{\beta_0}{2^{d \vartheta}}<1 \Leftrightarrow \log_2(\beta_0)/d<\vartheta,$$ 
and it ensures that the series in~\eqref{est-4} is finite. The proofs of~\eqref{GF-norm} is complete.
\end{proof}

\section{Global regularity for asymptotically regular problems}

This section is devoted to the proof of regularity estimate for weak solution to the asymptotically regular problem~\eqref{eq:asym}. As mentioned above, it is a consequence of global gradient bounds proved in preceding section. This proof exploits the level-set argument similar to that used for original Stokes problem. The assertion of Theorem~\ref{theo:asym} continues to hold in generalized function spaces.

\begin{proof}[Proof of Theorem~\ref{theo:asym}]
First, let us introduce a new matrix-valued function $\Theta_{\sigma}$ which defined in $\Omega \times \mathbb{R}^{d^2}$ as below
\begin{align}\notag %\label{def:Theta-pq}
& \Theta_{\sigma}(x,\nu) := \frac{\mathbf{a}(x,\nu)-\mathbf{A}(x,\nu)}{(\sigma^2+|\nu|^2)^{\frac{p-1}{2}}}, \quad (x,\nu) \in \Omega \times \mathbb{R}^{d^2}.
\end{align}
The assumption~\eqref{cond:asym} ensures that there exists $M_0>0$ such that
\begin{align}\label{ineq-delta}
 |\Theta_{\sigma}(x,\nu)| \le \delta_0,
\end{align}
for every $\nu \in \mathbb{R}^{d^2} \setminus B_{M_0}$, uniformly with respect to $x \in \Omega$. Also, for each $x \in \Omega$, let us consider the Poisson integral
\begin{align}\notag %\label{def:P}
\mathbb{P}[\Theta_{\sigma}(x,\cdot)](y) & := \int_{\partial B_{M_0}} \Theta_{\sigma}(x,\nu) \mathbb{K}(y,\nu) dS(\nu),
\end{align}
for all $y \in B_{M_0}$. Here, $\mathbb{K}$ stands for the Poisson kernel given by
\begin{align}\notag %\label{def:K}
\mathbb{K}(y,\nu) := \left(M_0 \omega_{n-1}\right)^{-1}\frac{M_0^2 - |y|^2}{|y-\nu|^n},
\end{align}
where $\omega_{n-1}$ denotes the surface area of $\partial B_1$ in $\mathbb{R}^{d^2}$. Now, we consider
\begin{align}\notag %\label{def:PI}
\Pi_{\sigma}(x,y) := \begin{cases} \Theta_{\sigma}(x,y), & \quad \mbox{ if } |y| \ge M_0,\\ \mathbb{P}[\Theta_{\sigma}(x,\cdot)](y), & \quad \mbox{ if } |y| < M_0. \end{cases}
\end{align}
Making use of~\eqref{ineq-delta} and the maximum principle, it allows us to verify that $\Pi_{\sigma}$ is Carath\'eodory map and moreover, we pattern the proof on arguments from~\cite{BOW2015} to obtain 
\begin{align}\label{est-Pi}
|\Pi_{\sigma}(x,y)| \le \delta_0, \quad \forall y \in \mathbb{R}^{d^2},
\end{align}
uniformly with respect to $x \in \Omega$. Keep in mind the notation already introduced, it allows us to establish the relation between $\mathbf{a}$ and $\mathbf{A}$ as following
\begin{align}\notag %\label{est-A-1}
\mathbf{a}(x,y) & = \mathbf{A}(x,y) + (\sigma^2+|y|^2)^{\frac{p-1}{2}} \Pi_{\sigma}(x,y) + (\sigma^2+|y|^2)^{\frac{p-1}{2}}\chi_{\{|y|<M_0\}} \left(\Theta_{\sigma}(x,y)-\Pi_{\sigma}(x,y)\right).
\end{align}
At this point, for the ease of the reader we further consider vector-valued function $\mathcal{E}$ as
\begin{align}\label{def:E}
\mathcal{E}(x,y) := \mathbf{A}(x,y) + (\sigma^2+|y|^2)^{\frac{p-1}{2}} \Pi_{\sigma}(x,\nabla u).
\end{align}
It is now easy to see that $\mathcal{E}(x,y)$ is differentiable with respect to $y$, and
\begin{align}\notag %\label{dE}
\partial_y \mathcal{E}(x,y) & = \partial_y \mathbf{A}(x,y) + (p-1)(\sigma^2+|y|^2)^{\frac{p-3}{2}} \langle \Pi_{\sigma}(x,\nabla u),  y\rangle, 
\end{align}
which from~\eqref{est-Pi} and~\eqref{cond:A1}, we deduce that
\begin{align}\label{est-E-1}
|\mathcal{E}(x,y)| + |\partial_y \mathcal{E}(x,y)| |y| & \le |\mathbf{A}(x,y)| + |\partial_y \mathbf{A}(x,y)| |y| + p\delta_0 (\sigma^2+|y|^2)^{\frac{p-1}{2}} \notag \\
& \le (\Upsilon + p) (\sigma^2+|y|^2)^{\frac{p-1}{2}},
\end{align}
for every $\delta_0 \in (0,1)$. In addition, for every $y_1, y_2 \in \mathbb{R}^{d^2}$, one gets
\begin{align}\label{est-E0}
\langle \mathcal{E}(x,y_1) - \mathcal{E}(x,y_2), y_1 - y_2\rangle & = \langle \mathbf{A}(x,y_1) - \mathbf{A}(x,y_2), y_1 - y_2\rangle \notag \\
&  +  \left[(\sigma^2+|y_1|^2)^{\frac{p-1}{2}}  - (\sigma^2+|y_2|^2)^{\frac{p-1}{2}}\right] \langle \Pi_{\sigma}(x,\nabla u), y_1 - y_2\rangle .
\end{align}
A closer look at \cite[Lemma 2.1]{H92} finds a constant $c>0$ such that
\begin{align*}
\left|(\sigma^2+|y_1|^2)^{\frac{p-1}{2}}  - (\sigma^2+|y_2|^2)^{\frac{p-1}{2}}\right| & \le c \left(\sigma^2 + |y_1|^2 + |y_2|^2 \right)^{\frac{p-2}{2}}|y_1 - y_2|.
\end{align*}
Using~\eqref{cond:A2},~\eqref{est-Pi} and~\eqref{est-E0} in the following way:
\begin{align}\label{est-E-2}
\langle \mathcal{E}(x,y_1) - \mathcal{E}(x,y_2), y_1 - y_2\rangle & \ge \left(\Upsilon^{-1}-2\delta_0 c\right) \left(\sigma^2 + |y_1|^2 + |y_2|^2 \right)^{\frac{p-2}{2}}|y_1 - y_2|^2.
\end{align}
We now take~\eqref{def:E} into account to clarify $\mathbf{a}(x,\nabla \mathbf{u}_{\mathbf{a}})$ as below
\begin{align}\notag %\label{est-A-1}
\mathbf{a}(x,\nabla \mathbf{u}_{\mathbf{a}}) & = \mathcal{E}(x,\nabla \mathbf{u}_{\mathbf{a}}) + (\sigma^2+|\nabla \mathbf{u}_{\mathbf{a}}|^2)^{\frac{p-1}{2}}\chi_{\{|\nabla \mathbf{u}_{\mathbf{a}}|<M_0\}} \left(\Theta_{\sigma}(x,\nabla \mathbf{u}_{\mathbf{a}})-\Pi_{\sigma}(x,\nabla \mathbf{u}_{\mathbf{a}})\right).
\end{align}
Since $(\mathbf{u}_{\mathbf{a}},\mathrm{\pi}_{\mathbf{a}}) \in W^{1,p}_{\mathbf{g},\mathrm{div}}(\Omega;\mathbb{R}^d) \times L^{p'}_{\mathrm{int}}(\Omega)$ is a pair of solutions to system of type~\eqref{eq:asym}, to be precise one has
\begin{align}\label{eq:asym-2}
\begin{cases}
-\mathrm{div} \left(\mathcal{E}(x,\nabla \mathbf{u}_{\mathbf{a}})\right) + \nabla \mathrm{\pi}_{\mathbf{a}} &= -\mathrm{div}\left(\mathbf{b}(x,\mathbf{f})\right) \quad \  \text{in}\ \Omega, \\
 \hspace{1.5cm} \mathrm{div} (\mathbf{u}_{\mathbf{a}}) &= \ 0  \hspace{2.45cm} \ \text{in}\ \Omega, \\
 \hspace{1.9cm} \mathbf{u}_{\mathbf{a}} & = \ \mathbf{g} \hspace{2.55cm} \text{on}\ \partial\Omega,
 \end{cases}
\end{align}
where 
\begin{align}\label{est-b}
\mathbf{b}(x,\mathbf{f}) = \mathbf{B}(x,\mathbf{f}) + (\sigma^2+|\nabla \mathbf{u}_{\mathbf{a}}|^2)^{\frac{p-1}{2}}\chi_{\{|\nabla \mathbf{u}_{\mathbf{a}}|<M_0\}} \left(\Pi_{\sigma}(x,\nabla \mathbf{u}_{\mathbf{a}})-\Theta_{\sigma}(x,\nabla \mathbf{u}_{\mathbf{a}})\right).
\end{align}
Thanks to~\eqref{est-Pi}, we write
\begin{align}\label{est-b1}
\left|(\sigma^2+|\nabla \mathbf{u}_{\mathbf{a}}|^2)^{\frac{p-1}{2}}\chi_{\{|\nabla \mathbf{u}_{\mathbf{a}}|<M_0\}}\Pi_{\sigma}(x,\nabla \mathbf{u}_{\mathbf{a}})\right| \le 2\delta_0 \left(\sigma^2+M_0^2\right)^{\frac{p-1}{2}},
\end{align}
and on the other hand, assumption~\eqref{cond:asym} gives us that
\begin{align}\notag
\left|\mathbf{a}(x,\nu)-\mathbf{A}(x,\nu)\right| \le \epsilon(|\nu|)(1 + |\nu|^{p-1}), \mbox{ and } \limsup_{\lambda \to \infty} \epsilon(\lambda) \le \delta_0,
\end{align}
where $\epsilon: \mathbb{R}^+ \to \mathbb{R}^+$ is defined by
\begin{align*}
\epsilon(|\nu|) = \sup_{x\in \Omega} \frac{\left|\mathbf{a}(x,\nu)-\mathbf{A}(x,\nu)\right|}{(\sigma^2+|\nu|^2)^{\frac{p-1}{2}}}.
\end{align*}
For this reason, one may estimate as below
\begin{align}\label{est-b2}
\left|(\sigma^2+|\nabla \mathbf{u}_{\mathbf{a}}|^2)^{\frac{p-1}{2}}\chi_{\{|\nabla \mathbf{u}_{\mathbf{a}}|<M_0\}}\Theta_{\sigma}(x,\nabla \mathbf{u}_{\mathbf{a}})\right| & = \left|\chi_{\{|\nabla \mathbf{u}_{\mathbf{a}}|<M_0\}}\left[\mathbf{a}(x,\nabla \mathbf{u}_{\mathbf{a}})-\mathbf{A}(x,\nabla \mathbf{u}_{\mathbf{a}})\right]\right| \notag \\
& \le \chi_{\{|\nabla \mathbf{u}_{\mathbf{a}}|<M_0\}}\epsilon(|\nabla \mathbf{u}_{\mathbf{a}}|)(1 + |\nabla \mathbf{u}_{\mathbf{a}}|^{p-1}) \notag \\
& \le \|\epsilon\|_{\infty} \left(1 + M_0^{p-1}\right).
\end{align}
Plugging~\eqref{est-b1} and~\eqref{est-b2} into~\eqref{est-b}, it yields
\begin{align}\label{est-b3}
\left|\mathbf{b}(x,\mathbf{f})\right| \le C(p,M_0,\|\epsilon\|_{\infty},\sigma) \left(1 + |\mathbf{f}|^2\right)^{\frac{p-1}{2}}.
\end{align}
By the preceding estimates~\eqref{est-E-1},~\eqref{est-E-2} and~\eqref{est-b3}, for $0<\delta_0<\frac{1}{4c\Upsilon}$, we are able to find $\tilde{\Upsilon}>0$ such that
\begin{align*}
\begin{cases} |\mathcal{E}(x,y)| + |\partial_y \mathcal{E}(x,y)| |y| \le \tilde{\Upsilon} (1 + |y|^2)^{\frac{p-1}{2}}, \\
\langle \mathcal{E}(x,y_1) - \mathcal{E}(x,y_2), y_1 - y_2\rangle \ge {\tilde{\Upsilon}}^{-1} \left(1 + |y_1|^2 + |y_2|^2 \right)^{\frac{p-2}{2}}|y_1 - y_2|^2,\\
\left|\mathbf{b}(x,\mathbf{f})\right| \le \tilde{\Upsilon} \left(1 + |\mathbf{f}|^2\right)^{\frac{p-1}{2}}.\end{cases}
\end{align*} 
Finally, applying Theorem~\ref{theo:main} and Theorem~\ref{theo:improv} to solution $(\mathbf{u}_{\mathbf{a}},\mathrm{\pi}_{\mathbf{a}})$ of problem~\eqref{eq:asym-2}, it leads to the desired estimate ~\eqref{main-asym}. We have thus completed the proof of Theorem~\ref{theo:asym}.
\end{proof}

\section*{Declarations}
Ethics approval and consent to participate: yes.\\

\noindent
Consent for publication: yes.\\

\noindent
Availability of data and materials: Data sharing not applicable to this article as no datasets were generated or analysed during the current study.\\

%\noindent
%Authors' contributions: T.-N. Nguyen and M.-P. Tran wrote the main parts of this paper. T.-N. Nguyen proved the first result concerning the generalized Lorentz and $\psi$-Morrey regularity estimate, M.-P. Tran proved the second result for asymptotically regular problem and completed the last version of this paper. H.-N. Nguyen wrote the raw version of the content and very first proofs of the existence result and comparison procedures.\\

%\noindent
%Authors' information (optional): 
%
%Thanh-Nhan Nguyen, Group of Analysis and Applied Mathematics, Department of Mathematics, Ho Chi Minh City University of Education, Ho Chi Minh City, Vietnam. Email: nhannt@hcmue.edu.vn.
%
%Minh-Phuong Tran, Applied Analysis Research Group, Faculty of Mathematics and Statistics, Ton Duc Thang University, Ho Chi Minh City, Vietnam. Email: tranminhphuong@tdtu.edu.vn;
%
%Hong-Nhung Nguyen, Department of Mathematics, Ho Chi Minh City University of Education, Ho Chi Minh City, Vietnam. Email: nhungnh2102@gmail.com.

\section*{Conflict of Interest}
The authors declared that they have no conflict of interest.

\section*{Acknowledgement}
This research is funded by Vietnam National Foundation for Science and Technology Development (NAFOSTED) under grant number 101.02-2021.17.

\end{document}